\newtheorem{theorem}{Theorem}
\newtheorem{corollary}[theorem]{Corollary}
\newtheorem{sublemma}{Lemma}[theorem]
\newtheorem{lemma}[theorem]{Lemma}
\newtheorem{question}[theorem]{Question}
\newtheorem{subclaim}[sublemma]{Claim}
\newcommand{\QED}{\end{proof}}
\def\proclaim[#1]{{\bf #1}}
\def\BF#1.{{\bf #1.}}
\newcommand{\url}[1]{{\tt #1}}
\newcommand{\Jonsson}{J\'{o}nsson}
\newcommand{\Godel}{G\"odel}
\newcommand{\Erdos}{Erd\H{o}s}
\newcommand{\Levy}{L\'{e}vy}
\renewcommand{\P}{{\mathbb P}}
\newcommand{\Q}{{\mathbb Q}}
\newcommand{\R}{{\mathbb R}}
\newcommand{\of}{\subseteq}
\newcommand{\ofnoteq}{\subsetneq}
\newcommand{\set}[1]{\{\,{#1}\,\}}
\newcommand{\dom}{\mathop{\rm dom}}
\newcommand{\ran}{\mathop{\rm ran}}
\newcommand{\cof}{\mathop{\rm cof}}
\newcommand{\Cof}{\mathop{\rm Cof}}
\newcommand{\Coll}{\mathop{\rm Coll}}
\newcommand{\Con}{\mathop{{\rm Con}}}
\newcommand{\image}{\mathbin{\hbox{\tt\char'42}}}
\newcommand{\plus}{{+}}
\newcommand{\plusplus}{{{+}{+}}}
\newcommand{\restrict}{\upharpoonright} 
\newcommand{\satisfies}{\models}
\newcommand{\gHOD}{\ensuremath{\mathord{{\rm g}\HOD}}} 
\newcommand{\Union}{\bigcup}
\newcommand{\intersect}{\cap}
\newcommand{\Intersect}{\bigcap}
\newcommand{\smalllt}{\mathrel{\mathchoice{\raise2pt\hbox{$\scriptstyle<$}}{\raise1pt\hbox{$\scriptstyle<$}}{\raise0pt\hbox{$\scriptscriptstyle<$}}{\scriptscriptstyle<}}}
\newcommand{\smallleq}{\mathrel{\mathchoice{\raise2pt\hbox{$\scriptstyle\leq$}}{\raise1pt\hbox{$\scriptstyle\leq$}}{\raise1pt\hbox{$\scriptscriptstyle\leq$}}{\scriptscriptstyle\leq}}}
\newcommand{\lt}{\smalllt}
\newcommand{\boolval}[1]{\mathopen{\lbrack\!\lbrack}\,#1\,\mathclose{\rbrack\!\rbrack}}
\def\[#1]{\boolval{#1}}
\newcommand{\UnderTilde}[1]{{\setbox1=\hbox{$#1$}\baselineskip=0pt\vtop{\hbox{$#1$}\hbox to\wd1{\hfil$\sim$\hfil}}}{}}
\newcommand{\Undertilde}[1]{{\setbox1=\hbox{$#1$}\baselineskip=0pt\vtop{\hbox{$#1$}\hbox to\wd1{\hfil$\scriptstyle\sim$\hfil}}}{}}
\newcommand{\undertilde}[1]{{\setbox1=\hbox{$#1$}\baselineskip=0pt\vtop{\hbox{$#1$}\hbox to\wd1{\hfil$\scriptscriptstyle\sim$\hfil}}}{}}
\newcommand{\UnderdTilde}[1]{{\setbox1=\hbox{$#1$}\baselineskip=0pt\vtop{\hbox{$#1$}\hbox to\wd1{\hfil$\approx$\hfil}}}{}}
\newcommand{\Underdtilde}[1]{{\setbox1=\hbox{$#1$}\baselineskip=0pt\vtop{\hbox{$#1$}\hbox to\wd1{\hfil\scriptsize$\approx$\hfil}}}{}}
\newcommand{\st}{\mid}
\renewcommand{\th}{{\hbox{\scriptsize th}}}
\def\<#1>{\langle#1\rangle}
\newcommand{\cp}{\mathop{\rm cp}}
\newcommand{\ORD}{\mathop{{\rm ORD}}}
\newcommand{\ZFC}{{\rm ZFC}}
\newcommand{\ZF}{{\rm ZF}}
\newcommand{\KM}{{\rm KM}}
\newcommand{\NGBC}{{\rm NGBC}}
\newcommand{\NGB}{{\rm NGB}}
\newcommand{\AC}{{\rm AC}}
\newcommand{\HOD}{{\rm HOD}}
\newcommand{\cell}[1]{\boxit{\hbox to 17pt{\strut\hfil$#1$\hfil}}}
\newcommand{\head}[2]{\lower2pt\vbox{\hbox{\strut\footnotesize\it\hskip3pt#2}\boxit{\cell#1}}}
\newcommand{\boxit}[1]{\setbox4=\hbox{\kern2pt#1\kern2pt}\hbox{\vrule\vbox{\hrule\kern2pt\box4\kern2pt\hrule}\vrule}}
\newcommand{\Col}[3]{\hbox{\vbox{\baselineskip=0pt\parskip=0pt\cell#1\cell#2\cell#3}}}
\newcommand{\tapenames}{\raise 5pt\vbox to .7in{\hbox to .8in{\it\hfill input: \strut}\vfill\hbox to
.8in{\it\hfill scratch: \strut}\vfill\hbox to .8in{\it\hfill output: \strut}}}
\newcommand{\Head}[4]{\lower2pt\vbox{\hbox to25pt{\strut\footnotesize\it\hfill#4\hfill}\boxit{\Col#1#2#3}}}
\newcommand{\Dots}{\raise 5pt\vbox to .7in{\hbox{\ $\cdots$\strut}\vfill\hbox{\ $\cdots$\strut}\vfill\hbox{\
$\cdots$\strut}}}
\newcommand{\df}{\it} 
\DeclareMathOperator{\rk}{rank}
\DeclareMathOperator{\tcl}{tcl}
\begin{document}
\author[Hamkins]{Joel David Hamkins}
\address{J. D. Hamkins, Mathematics,
The Graduate Center of The City University of New York, 365
Fifth Avenue, New York, NY 10016 \& Mathematics, The
College of Staten Island of CUNY, Staten Island, NY 10314}
\email{jhamkins@gc.cuny.edu, http://jdh.hamkins.org}
\thanks{The research of the first author has been
supported in part by grants from the CUNY Research
Foundation, the Simons Foundation and the National Science Foundation. Commentary concerning this paper can be made at http://jdh.hamkins.org/generalizationsofkuneninconsistency.}
\author[Kirmayer]{Greg Kirmayer}
\address{Greg Kirmayer}
\email{kirmayerg@member.ams.org}
\author[Perlmutter]{Norman Lewis Perlmutter}
\address{Norman Perlmutter, Mathematics,
The Graduate Center of The City University of New York, 365
Fifth Avenue, New York, NY 10016}
\email{Norman314@gmail.com}

\begin{abstract}
We present several generalizations of the well-known Kunen
inconsistency that there is no nontrivial elementary
embedding from the set-theoretic universe $V$ to itself.
For example, there is no elementary embedding from the
universe $V$ to a set-forcing extension $V[G]$, or
conversely from $V[G]$ to $V$, or more generally from one set-forcing
ground model of the universe to another, or between any two
models that are eventually stationary correct, or from $V$
to $\HOD$, or conversely from $\HOD$ to $V$, or indeed from
any definable class to $V$, among many other possibilities
we consider, including generic embeddings, definable
embeddings and results not requiring the axiom of choice.
We have aimed in this article for a unified presentation
that weaves together some previously known unpublished or
folklore results, several due to Woodin and others, along
with our new contributions.
\end{abstract}

\title{Generalizations of the Kunen Inconsistency}
\maketitle

The Kunen inconsistency
\cite{Kunen1971:ElementaryEmbeddingsAndInfinitaryCombinatorics},
the theorem showing that there can be no nontrivial
elementary embedding from the universe to itself, remains a
focal point of large cardinal set theory, marking a hard
upper bound at the summit of the main ascent of the large
cardinal hierarchy, the first outright refutation of a
large cardinal axiom. On this main ascent, large cardinal
axioms assert the existence of elementary embeddings
$j:V\to M$ where $M$ exhibits increasing affinity with $V$
as one climbs the hierarchy. The $\theta$-strong cardinals,
for example, have $V_\theta\of M$; the
$\lambda$-supercompact cardinals have $M^\lambda\of M$; and
the huge cardinals have $M^{j(\kappa)}\of M$. The natural
limit of this trend, first suggested by Reinhardt, is a
nontrivial elementary embedding $j:V\to V$, the critical
point of which is accordingly known as a {\df Reinhardt}
cardinal. Shortly after this idea was introduced, however,
Kunen famously proved using the axiom of choice that there are no such embeddings
and hence no Reinhardt cardinals.

\begin{theorem}[The Kunen Inconsistency]\label{Theorem.NojVtoV}
There is no nontrivial elementary embedding $j:V\to V$.
\end{theorem}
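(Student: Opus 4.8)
The plan is to derive a contradiction from a supposed nontrivial elementary embedding $j:V\to V$ by exploiting the tension between the closure properties forced by $j$ and a combinatorial object that cannot survive them. Let $\kappa$ be the critical point of $j$, and form the critical sequence $\kappa_0=\kappa$, $\kappa_{n+1}=j(\kappa_n)$, with supremum $\lambda=\sup_n\kappa_n$. The first thing to observe is that $j(\lambda)=\lambda$, since $j$ is continuous at $\lambda$ (a supremum of a countable increasing sequence that $j$ permutes upward). This fixed point $\lambda$ is the arena in which the contradiction will be staged.

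Next I would bring in the combinatorial heart of the argument. The classical route, following Kunen, is to invoke an Erd\H{o}s--Hajnal style result: for any infinite ordinal $\lambda$, there is an $\omega$-J\'onsson function, that is, a function $f:[\lambda]^\omega\to\lambda$ such that $f$ is surjective on $[X]^\omega$ for every unbounded $X\subseteq\lambda$. The idea is to consider $j(f)$ and compare it with $f$ on arguments built from the critical sequence. The key step is to locate a set $A\subseteq\lambda$ of order type $\omega$ (for instance a suitable cofinal subset, or the critical sequence itself if its order type cooperates) whose image $j[A]$ lies inside the domain where $f$ and $j(f)$ can be played against each other, and then to exhibit a value that $j(f)$ must omit on $j[A]$ but which $f$, by J\'onssonness, cannot omit, contradicting $j(f)(j[A]) = j\bigl(f(A)\bigr)$ combined with the fixed-point behavior of $j$ on $\lambda$.

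Making that last comparison precise is where the essential use of the axiom of choice enters, and it is the step I expect to be the main obstacle. The existence of the $\omega$-J\'onsson function is itself a theorem of $\ZFC$ and relies on a well-ordering of $[\lambda]^\omega$; without choice the argument breaks down, which is consistent with the fact that Reinhardt cardinals are not known to be refutable in $\ZF$ alone. Concretely, the delicate point is to arrange an argument $s\in[\lambda]^\omega$ so that $j[s]$ is pointwise fixed by nothing yet $j"\lambda$ is nonstationary or otherwise ``too thin'' to contain the witness that surjectivity demands. I would set this up by choosing $s$ so that $j(f)(j[s])$ is forced to lie in the range of $j$ restricted to $\lambda$, namely in $j"\lambda$, while J\'onssonness of $j(f)=f$ (using $j(\lambda)=\lambda$) guarantees a value outside $j"\lambda$, since $j"\lambda\neq\lambda$ because $\kappa\notin j"\lambda$.

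Finally, I would assemble these pieces: the fixed point $\lambda$, the elementarily transferred function $j(f)$, and the incompatibility of surjectivity with the properness of the range $j"\lambda\subsetneq\lambda$. The contradiction shows no such $j$ can exist. An alternative packaging replaces the J\'onsson function with the nonexistence of a nontrivial embedding fixing a sufficiently closed structure, but the Erd\H{o}s--Hajnal function gives the cleanest contradiction and is the route I would follow.
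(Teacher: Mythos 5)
You have chosen Kunen's original route via $\omega$-J\'onsson functions, which is a genuinely different path from the one this paper takes: here the Kunen inconsistency is obtained as the trivial-forcing instance of theorem \ref{Theorem.Noj:V[G]toV}, whose engine is the Ulam--Solovay stationary partition theorem rather than Erd\H{o}s--Hajnal (the paper mentions the J\'onsson-function method only as an alternative way to finish once $j\image\lambda^\plus$ is in hand). Your setup is correct: the critical sequence, the fixed point $\lambda=\sup_n\kappa_n$ with $j(\lambda)=\lambda$, the Erd\H{o}s--Hajnal theorem supplying an $\omega$-J\'onsson $f:[\lambda]^\omega\to\lambda$, the fact that $\kappa\notin\ran(j)$, and the correct identification of where \AC\ enters (the existence of the J\'onsson function).

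However, the key step is not carried out, and what you write in its place is confused. First, the set to which J\'onssonness is applied is not ``a set $A\of\lambda$ of order type $\omega$''; it is $A=j\image\lambda$, which has \emph{cardinality} $\lambda$ --- the order-type-$\omega$ sets are the arguments $s\in[A]^\omega$ fed to the function, and you have conflated the two roles. Second, the assertion $j(f)=f$ is false in general and unnecessary; elementarity together with $j(\lambda)=\lambda$ gives that $j(f)$ is $\omega$-J\'onsson for $\lambda$, and that suffices. Third, and most importantly, the entire argument turns on the lemma that $j(t)=j\image t$ for every $t\in[\lambda]^\omega$: apply $j$ to the increasing enumeration $n\mapsto t_n$ and use $j(n)=n$ to see that $j(t)$ is enumerated by $n\mapsto j(t_n)$. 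This is what shows that every $s\in[j\image\lambda]^\omega$ has the form $s=j(t)$ for $t=j^{-1}\image s$, whence $j(f)(s)=j(f)(j(t))=j(f(t))\in\ran(j)$; since $j(f)\image[j\image\lambda]^\omega=\lambda$ by J\'onssonness, the value $\kappa$ is attained, putting $\kappa\in\ran(j)$, a contradiction. You flag exactly this step as ``the main obstacle'' and describe it only with phrases such as ``$j[s]$ is pointwise fixed by nothing,'' which does not parse into an argument. Without the lemma $j(t)=j\image t$ for countable $t$, the proof does not close.
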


In this article, we present several generalizations of this theorem,
thereby continuing what has been a small industry of
generalizations of this central result, including Harada
\cite[p. 320-321]{Kanamori2004:TheHigherInfinite2ed},
Woodin \cite[p. 322]{Kanamori2004:TheHigherInfinite2ed},
Zapletal \cite{Zapletal1996:ANewProofOfKunenInconsistency}
and Suzuki \cite{Suzuki1998:NojVtoVinV[G],
Suzuki1999:NoDefinablejVtoVinZF}. In order to emphasize a
coherent theme of mathematical ideas, we give a unified
presentation that includes some previously known
generalizations and unpublished folklore results along with
our new contributions. A fitting alternative title for our
paper might therefore have been ``Generalizations of
generalizations of the Kunen inconsistency.'' Among the
generalizations of the Kunen inconsistency we establish in
this article, several due to Woodin and others, are the
following:
\begin{enumerate}
 \item There is no nontrivial elementary embedding
     $j:V[G]\to V$ of a set-forcing extension of the
     universe to the universe, and neither is there
     $j:V\to V[G]$ in the converse direction.
 \item More generally, there is no nontrivial
     elementary embedding between two set-forcing ground models of
     the universe.
 \item More generally still, there is no nontrivial
     elementary embedding $j:M\to N$ when both $M$ and
     $N$ are eventually stationary correct.
 \item There is no nontrivial elementary embedding
     $j:V\to \HOD$, and neither is there $j:V\to M$ for
     a variety of other definable classes, including
     $\gHOD$ and the $\HOD^\eta$, $\gHOD^\eta$.
 \item If $j:V\to M$ is elementary, then $V=\HOD(M)$.
 \item There is no nontrivial elementary embedding
     $j:\HOD\to V$.
 \item More generally, for any definable class $M$,
     there is no nontrivial elementary embedding
     $j:M\to V$.
 \item There is no nontrivial elementary embedding
     $j:\HOD\to\HOD$ that is definable in $V$ from
     parameters.
\end{enumerate}
This list is just a selection; all the details and
additional more refined generalizations appear in the
subsequent theorems of this article, including other
natural definable classes, such as the iterated
$\HOD^\eta$, the generic-$\HOD$ and $\gHOD^\eta$, generic
embeddings, definable embeddings and results not requiring
the axiom of choice.

%

\section{A few metamathematical preliminaries} \label{Section.Preliminaries}

Before getting to the actual generalizations of the Kunen
inconsistency, let us begin by dispelling a few
metamathematical clouds that occasionally obscure the large
cardinal summit of the Kunen inconsistency. We should like
briefly to clarify these metamathematical issues.

The first concerns the fact that the Kunen inconsistency is
explicitly a second-order claim, as the purported embedding
$j$ that it rules out would clearly be a proper class of
some kind. In particular, the statement of the Kunen
inconsistency does not seem directly to be expressible in
the usual first-order language of set theory, as it
quantifies over second-order objects: {\it ``There is no
$j$ such that\ldots.''} So how and in which theory shall we
take it as a precise mathematical claim?

To be sure, many large
cardinal notions are characterized by second-order
assertions that turn out to have first-order equivalent
formulations, which can be treated in \ZFC. For example, a
cardinal $\kappa$ is measurable if it is the critical point
of an elementary embedding of $V$ into a transitive class,
and this is equivalent to the first-order assertion that
there is a nonprincipal $\kappa$-complete ultrafilter on
$\kappa$. But it is easy to see that there can be no
corresponding consistent first-order formulation of
Reinhardt cardinals, since if $\kappa$ is the least
Reinhardt cardinal, then by elementarity $j(\kappa)$ will
also be least, an immediate contradiction. More generally,
there can be no consistent first-order property
$\varphi(\kappa)$ that implies that $\kappa$ is Reinhardt,
because if $\kappa$ is the least cardinal with that
property, then by elementarity so is $j(\kappa)$, again a
contradiction. So there is no completely first-order
account of the Reinhardt cardinal concept, and the issue is
how then we are to formalize Reinhardt cardinals and the
Kunen inconsistency statement in some second-order manner.
Although there are a variety of satisfactory resolutions of
this issue, aligning with the various treatments of classes
and proper classes that are available in set theory, it
turns out that they are not equally efficacious, for some
provide a greater substance for the Kunen inconsistency
than others.

One traditional approach to classes in set theory, working
purely in \ZFC, is to understand all talk of classes as a
substitute for the first-order definitions that might
define them. In this formulation, the Kunen inconsistency
becomes a theorem scheme, asserting that no formula defines
(with parameters) a nontrivial elementary embedding of the
universe to itself. Thus, for each first-order formula
$\psi$ in the language of set theory, we have the theorem
that for no parameter $z$ does the relation $\psi(x,y,z)$
define a function $y=j(x)$ that is an elementary embedding
from $V$ to $V$. This is the approach used in
\cite{Kanamori1997:TheHigherInfinite,
Kanamori2004:TheHigherInfinite2ed}.\footnote{``As the
quantification $\forall j$ over classes $j$ cannot be formalized in ZFC,
this result can only be regarded as a schema of theorems,
one for each $j$''  \cite[p.
319]{Kanamori2004:TheHigherInfinite2ed}, see also \cite[p. 319]
{Kanamori1997:TheHigherInfinite}. The author also
notes, however, that the purely first-order strengthening
of the theorem to the assertion that there is no nontrivial
$j:V_{\lambda+2}\to V_{\lambda+2}$ is expressible and
provable purely in \ZFC.}

Our view is that this way of understanding the Kunen
inconsistency does not convey the full power of the
theorem. Part of our reason for this view is that if one is
concerned only with such definable embeddings $j$ in the
Kunen inconsistency, then in fact there is a far easier
proof of the result, simpler than any of the traditional
proofs of it and making no appeal to any infinite
combinatorics or indeed even to the axiom of choice. We
explain this argument in theorem
\ref{Theorem.NoDefinablej:VtoVinZF}.

Instead, a fuller power for the Kunen inconsistency seems
to be revealed when it is understood as a claim in a true
second-order set theory, such as von
Neumann-G\"odel-Bernays set theory \NGBC\ (see
 \cite[p. 70]{Jech:SetTheory3rdEdition}; this theory is also commonly known as
G\"odel-Bernays set theory) or Kelley-Morse \KM\ set
theory. Kunen himself understood his result to be
formalized in \KM, writing:
\begin{quote}
      It is intended that our results be formalized within the second~order
      Morse-Kelley set theory (as in the appendix to Kelley \cite{Kelley1965:GeneralTopology}),
      so that statements involving the satisfaction predicate for class models
      can be expressed.
      (\cite[p. 407]{Kunen1971:ElementaryEmbeddingsAndInfinitaryCombinatorics})
\end{quote}
Meanwhile, as we shall explain below, it turns out that for
the purposes of the Kunen inconsistency, one can
sufficiently express the satisfaction relation and prove
the theorem in the strictly weaker theory \NGBC, even in
the subtheory $\NGB+\AC$, or in the fragment of this we
denote by $\ZFC(j)$ below.

In these second-order theories, one distinguishes between
the first-order objects, the sets, and the second-order
objects, the classes, (although there are elegant
economical accounts that unify the treatment purely in
terms of classes). The crucial difference between the
theories is that \NGB\ includes the replacement and
separation axioms only for formulas having only first-order
quantifiers, that is, quantifying only over sets, allowing
finitely many class parameters, whereas \KM\ allows
formulas into the schemes that quantify also over classes.
The theories \NGBC\ and \KM\ include also a global choice
principle, whereas the theory $\NGB$ omits any choice
principle.

The \NGBC\ theory is conservative over \ZFC, meaning that
any first-order assertion about sets provable in \NGBC\ is
also provable in \ZFC, and this can be easily proved by
expanding any model of \ZFC\ to a model of \NGBC\ by adding
a generic global choice class, if necessary, and then
interpreting the second-order part to consist precisely of
the classes that are definable from this class and set
parameters. In particular, \NGBC\ and \ZFC\ are
equiconsistent. The theory \KM, in contrast, is strictly
stronger than \ZFC\ in consistency strength (if
consistent), because it proves that there is a satisfaction
predicate for first-order truth, and indeed, that there is
a satisfaction predicate for first-order truth relativized
to any class parameter. So \KM\ proves $\Con(\ZFC)$ and
$\Con(\ZFC+\Con(\ZFC))$ and so on transfinitely, and
therefore is not conservative over \ZFC, if this theory is
consistent.

In some of our arguments below, we will use the forcing
method in the \NGBC\ context, and so let us remark without
elaboration that the usual theory of forcing goes through
fine in this second-order setting: the classes of the
forcing extension are obtained from the classes of the
ground model by interpreting them in the usual name
fashion, \NGBC\ is preserved and all the usual forcing
technology works as expected. We say that $M$ is a ground model of $N$ if the latter can be realized as a forcing extension $N=M[G]$ for some $M$-generic filter $G\of\P\in M$. In order to emphasize that the forcing should be a set in $M$, we will sometimes say that $M$ is a set-forcing ground model.

Proving the Kunen inconsistency in \NGBC\ appears to give a
stronger result than either the \ZFC\ scheme approach or
the \KM\ approach, the former because it rules out not only
the definable embeddings, but also the possibility that a
non-definable embedding $j:V\to V$ may arise as a class in
an \NGBC\ model, and the latter simply because \NGBC\ is a
weaker theory than \KM\ and closer to \ZFC. The easy proof
of the definable embedding version of the Kunen
inconsistency, as in theorem
\ref{Theorem.NoDefinablej:VtoVinZF}, does not seem to
generalize to establish the stronger \NGBC\ result.
Therefore, for the rest of this article, unless otherwise
stated, we shall work formally in \NGBC\ set theory. But
actually, none of our arguments needs the global version of
choice, and so $\NGB+\AC$ suffices for us; meanwhile, when
we say below, ``Do not assume \AC,'' we intend to work in
\NGB. As the class $j$ and the ones definable from it will
be the only classes we need to consider, we could
alternatively formalize the presentation of our theorems in
the theory $\ZFC(j)$, the intermediate subtheory of $\NGB+\AC$
where one has fixed a single class predicate for $j$, which
is allowed to appear in the formulas of the replacement and
separation axiom schemes, or in $\ZF(j)$ when we do not use
\AC. When $M$ is a model of $\ZFC$, we shall say that $j$ is a
class of $M$ to mean that $(M, j)$ is a model of $\ZFC(j)$.

The second metamathematical issue concerning the Kunen
inconsistency that we would like to discuss, which arises
whether one uses the \ZFC\ theorem scheme approach, the
\NGBC\ approach or the \KM\ approach, is that the theorem
involves the hypothesis that $j$ is an elementary
embedding, and it is not immediately clear how to express
such a hypothesis in our language. Na\"ively, the assertion
that $j:V\to V$ is elementary is expressed most plainly by
a scheme of statements, those of the form $\forall x\,
[\varphi(\vec x)\longleftrightarrow \varphi(j(\vec x))]$,
rather than by a single statement. But a scheme does not
seem to serve the purpose here, because the assertion that
$j$ is elementary appears negatively in the
theorem---either as the antecedent of an implication or, in
the contrapositive, as the claim ultimately that $j$ is not
elementary---whereas the negation of a scheme is not
generally expressible even as a scheme. So we cannot seem
to use a scheme account of elementarity in order to find a
coherent statement of the theorem, even as a scheme, and
even when $j$ itself is assumed to be defined by a given
formula $\psi$. So again, how are we precisely to express
the theorem?

Kunen observed that this issue is addressed in \KM\ set
theory by the fact that \KM\ proves the existence of a
class satisfaction predicate for firstorder truth, by
means of which the elementarity of $j$ is expressible. Meanwhile, at
around the same time as the Kunen inconsistency, set-theorists realized how to express the elementarity of
$j$ in the weaker theory \NGB\ by making use the observation that every $\Delta_0$-elementary
cofinal embedding of models of \ZF\ is fully elementary. An
embedding $j:M\to N$ of transitive classes is {\df cofinal}
if for every $y\in N$ there is $x\in M$ with $y\in j(x)$.
Equivalently, $N=\Union j\image M$.

\begin{lemma}[Gaifman \cite{Gaifman1974:ElementaryEmbeddingsOfModelsOfSetTheoryAndCertainSubtheories}] \label{Lemma.Gaifman}
If $j:M\to N$ is $\Delta_0$-elementary and cofinal, where
$M$ satisfies \ZF, then $j$ is fully elementary.
\end{lemma}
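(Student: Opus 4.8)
The plan is to prove this by induction on the complexity of formulas, showing that $\Delta_0$-elementarity propagates up the quantifier hierarchy when the embedding is cofinal. The key structural fact is that cofinality lets us "reflect" quantifiers: any witness to an existential statement in $N$ lives inside $j(x)$ for some $x\in M$, so the search for a witness in $N$ can be pulled back to a bounded search inside $j(x)$, which is itself the image of a bounded search inside $x$ in $M$.

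**First I would** set up the induction. The base case is the $\Delta_0$ (bounded) formulas, which hold by hypothesis. The Boolean connective cases ($\neg$, $\wedge$) are routine and pass through without using cofinality. The only substantive case is the existential quantifier step, where I assume the result for $\varphi(y,\vec x)$ and aim to establish it for $\exists y\,\varphi(y,\vec x)$. So fix $\vec a\in M$ and write $\vec b = j(\vec a)$.

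**The heart of the argument** is the existential case, in the direction $N\models \exists y\,\varphi(y,\vec b)\implies M\models\exists y\,\varphi(y,\vec a)$. Suppose $N\models\varphi(c,\vec b)$ for some $c\in N$. By cofinality, there is $x\in M$ with $c\in j(x)$, so in fact $N\models \exists y\in j(x)\,\varphi(y,\vec b)$. Now here is where I would exploit the induction in a bounded form: the formula $\exists y\in z\,\varphi(y,\vec x)$ has the same logical complexity profile as $\varphi$ relative to the bound $z$, and since $j(x)$ bounds the witness, I can treat the whole statement as governed by the inductive hypothesis applied to $\varphi$ itself. Concretely, since $c\in j(x)$ and $N\models\varphi(c,\vec b)$, and $j$ restricted to the relevant bounded domain reflects $\varphi$-truth by the inductive hypothesis, I conclude there is $a\in x$ (so $a\in M$) with $M\models\varphi(a,\vec a)$, giving $M\models\exists y\,\varphi(y,\vec a)$. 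The forward direction $M\models\exists y\,\varphi \implies N\models\exists y\,\varphi$ is the easier half: a witness $a\in M$ maps to $j(a)\in N$, and the inductive hypothesis gives $N\models\varphi(j(a),\vec b)$ directly, with no appeal to cofinality needed.

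**The main obstacle** I anticipate is making the bounded-quantifier bookkeeping honest. The subtlety is that the inductive hypothesis is stated for the formula $\varphi$, but in the existential step I want to apply it after bounding the witness by $j(x)$; I must be careful that "$\exists y\in j(x)$" genuinely reduces to applying the hypothesis to $\varphi$ at the specific witness, rather than smuggling in an unjustified claim about bounded quantifiers. This is where the full strength of \ZF\ in $M$ enters — I need that the relevant witnesses and bounds $x$ actually exist as sets in $M$ and that $j(x)$ behaves as the image of $x$ under $j$ (so that membership in $j(x)$ is controlled by $\Delta_0$-elementarity, which gives $j(a)\in j(x)\iff a\in x$). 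I would state the induction carefully enough that the bounded existential reduction is transparent, and I expect the argument to go through cleanly once the quantifier-reflection step is phrased as: witnesses in $N$ are captured inside images $j(x)$, and images are controlled by the preservation already established at lower complexity.
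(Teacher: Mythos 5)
Your overall strategy matches the one the paper sketches: induct on quantifier complexity, observe that the forward direction of the existential step is immediate, and use cofinality to replace the unbounded existential in $N$ by a bounded one, $\exists y\in j(x)\,\varphi(y,\vec b)$. The gap is in how you pull this bounded statement back to $M$. You write that ``$j$ restricted to the relevant bounded domain reflects $\varphi$-truth by the inductive hypothesis,'' and you offer as a safeguard that $j(a)\in j(x)\iff a\in x$. But the inductive hypothesis is pointwise only on the \emph{range} of $j$: it tells you $M\satisfies\varphi(a,\vec a)\iff N\satisfies\varphi(j(a),\vec b)$ for $a\in M$. The witness $c\in j(x)$ need not be of the form $j(a)$ for any $a\in x$, since in general $j\image x\subsetneq j(x)$ --- for a nontrivial embedding with critical point $\kappa$, already $j\image\kappa=\kappa\subsetneq j(\kappa)$ --- so the inductive hypothesis says nothing at all about $c$, and one cannot ``apply the hypothesis to $\varphi$ at the specific witness.'' The equivalence $j(a)\in j(x)\iff a\in x$ only constrains which images land in $j(x)$; it does not make every element of $j(x)$ an image, and that conflation of $j(x)$ with $j\image x$ is exactly where the argument breaks.

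What actually closes this step is transferring the formula $\exists y\in z\,\varphi(y,\vec w)$ \emph{as a whole}, not witness by witness: using the collection scheme, a bounded existential in front of a $\Pi_n$ formula is \ZF-provably equivalent to a $\Pi_n$ formula (and dually for bounded universals over $\Sigma_n$ formulas), so the inductive $\Sigma_n$-elementarity --- which is a biconditional and hence also gives $\Pi_n$-elementarity in both directions --- applies directly to $\exists y\in j(x)\,\varphi(y,\vec b)$ and yields $M\satisfies\exists y\in x\,\varphi(y,\vec a)$. This is the ``absorption of bounded quantification'' the paper points to, and it is a fact about the complexity of the formula, established by collection, rather than a pointwise use of the inductive hypothesis. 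There is a further wrinkle your write-up does not address: that absorption equivalence is a theorem of \ZF, while the lemma assumes only that $M$ satisfies \ZF; the easy version of the argument needs the equivalence to hold in $N$ as well, and dispensing with that assumption is precisely Gaifman's refinement.
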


The conclusion of the lemma, that $j$ is fully elementary,
is expressed as a scheme, consisting of the assertions
$\forall x\in M\,[\varphi^M(x)\iff\varphi^N(j(x))]$ for
every formula $\varphi$, and so the lemma is technically a
lemma scheme, asserting that any such embedding has any
desired degree of elementarity. One may regard the lemma
instead as the scheme asserting for each natural number $n$
that $j$ is $\Sigma_n$-elementary, which we may prove by
meta-theoretic induction on $n$. The atomic and Boolean
combination cases are easy, as is the forward direction of
the extensional case. The backward direction uses the
cofinality hypothesis to transform an unbounded existential
to a bounded existential, which reduces to the inductive hypothesis.
This final step of the argument is easy when one assumes that both $M$ and $N$ satisfy \ZF, since one can appeal to the absorption of bounded quantification by $\Sigma_n$ assertions in a \ZF-provably canonical manner. Gaifman's observation was that, in fact, it can be carried out even if one assumes only that $M$ satisfies \ZF, or even less. For example, \cite{GitmanHamkinsJohnstone:WhatIsTheTheoryZFC-Powerset?} provides full details for the context of $\ZFC^-$, set theory without the power set axiom. Also, \cite[p. 45]{Kanamori2004:TheHigherInfinite2ed} has some further details, including information about the fact that if $M$ and $N$ are transitive proper class models of \ZF\
containing all the ordinals and $j:M\to N$ is
$\Sigma_1$-elementary, then in fact the cofinality
assumption of lemma \ref{Lemma.Gaifman} follows for free.
This is because $j(V_\alpha^M)=V_{j(\alpha)}^N$, as the
relation ``$x=V_\alpha$'' has complexity $\Pi_1$, and since
$\alpha\leq j(\alpha)$, these sets union up to $N$, which
implies that $j$ is cofinal.

One important difference between the \NGBC\ approach to
expressing the elementarity of $j$ via Gaifman's theorem
and the \KM\ approach using a satisfaction predicate is
that in the former account, one derives any desired
instance of elementarity by meta-theoretic induction from
the assumption that $j$ is $\Delta_0$ elementary and
cofinal, whereas in \KM\ one uses the internal assertion
that $j$ is elementary with respect to the first-order
satisfaction class that \KM\ proves to exist. In
particular, in the \KM\ context, such embeddings are also
elementary with respect to the possibly nonstandard
formulas, and one can imagine inductive arguments that rely
internally on $\Sigma_n$-elementarity for every natural
number $n$, a possibility that it seems could not be
carried out in \NGBC\ approach via Gaifman's theorem,
because in that approach one has such full elementarity
only as a meta-theoretic scheme.

Note that if $M$ is an $\NGBC$ model and $j:M\to N$ is
elementary in the sense of lemma \ref{Lemma.Gaifman}, then
we may extend the domain of $j$ to include every \NGBC\
class $A$ (including in particular $A=j$, if this should be a class of $M$) by defining
$j(A)=\Union_{\alpha\in\ORD}j(A\intersect V_\alpha)$. This
extended embedding remains cofinal and
$\Delta_0$-elementary from the structure $\< M,{\in},A>\to \<N,\in,j(A)>$, and so by
the argument of Gaifman's lemma, it is fully elementary in
the expanded context, meaning that for any set $x$ and
class $A$ we have $M\satisfies \varphi(x, A) \iff N
\satisfies \varphi(j(x), j(A))$ for any formula $\varphi$
with only first-order quantifiers. Since we have used $j(A)$ on the right, this is slightly weaker from saying that $j$ is elementary in the language with $A$, if one should take this phrase to mean the stronger property that $A$ is interpreted the same in both $M$ and $N$. In particular, we rarely expect an embedding $j$ to be elementary in the language with $j$ itself, if $j$ is interpreted as $j$ in both the domain and the range, since the critical point $\kappa=\cp(j)$ is definable from $j$, but it is not in the range of $j$, which would contradict this stronger form of elementarity with respect to $j$. Rather, if $j:M\to N$ and $j$ is a class of $M$, then what we have is elementarity $j:\<M,\in,j>\to \<N,\in,j(j)>$, using $j(j)$ on the right.

A third metamathematical issue, which does not arise for
the Kunen inconsistency itself, but which does arise for
several of the generalizations of it, is the question of
how to express that a given transitive proper class $M$ is
in fact a model of \ZF\ or of \ZFC. After all, a na\"ive
formalization of this would seem to involve a scheme of
assertions, asserting that $M\satisfies\psi$ for every
axiom $\psi$. Nevertheless, it is well-known that for a
transitive proper class one can reduce this entire scheme
to a single first-order assertion about $M$ as described in lemma \ref{Lemma.ZFFirstOrderExpressible} below. A
transitive class $M$ is {\df almost universal} if for every set $x$ such that $x\of M$, there exists $y \in M$ such that $x\of y$.

\begin{lemma}[{\cite[Theorem 13.9]{Jech:SetTheory3rdEdition}}]\label{Lemma.ZFFirstOrderExpressible}
A transitive proper class $M$ is a model of \ZF\ if and
only if it is closed under the finitely many \Godel\
operations and is almost universal. This property of $M$ is
expressible in a single first-order assertion using class
parameter $M$.
\end{lemma}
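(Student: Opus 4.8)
The plan is to prove both implications and then observe the expressibility claim separately. Since $M$ is a transitive class, extensionality and foundation hold automatically (membership is genuine and well-founded), so all the work concerns the remaining axioms. For the forward direction, suppose $M\satisfies\ZF$. Each \Godel\ operation is given by an absolute, essentially $\Delta_0$ definition, so applying one to sets of $M$ produces a set that $M$ itself recognizes as existing; hence $M$ is closed under the operations. For almost universality, I use that a transitive proper-class model of \ZF\ contains all the ordinals, so given a set $x\of M$ the ranks of its elements are bounded by some $\alpha$, whence $x\of(V_\alpha)^M\in M$.

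For the converse, the engine is the \Godel\ Normal Form Theorem: for every $\Delta_0$ formula $\varphi(\vec u)$ there is a fixed composition of \Godel\ operations $G_\varphi$ with $G_\varphi(X_1,\ldots,X_n)=\{\vec u\in X_1\times\cdots\times X_n:\varphi(\vec u)\}$ for all sets $X_i$. Because $\Delta_0$ formulas are absolute for transitive classes, closure of $M$ under the \Godel\ operations thereby delivers $\Delta_0$-separation in $M$: for $a,\vec p\in M$ the set $\{x\in a:\varphi^M(x,\vec p)\}$ lies in $M$. From this I obtain pairing directly (the map $x,y\mapsto\{x,y\}$ is a \Godel\ operation) and union, since $\Union a\of M$ is a set and so sits inside some $Y\in M$ by almost universality, after which it is carved out by the $\Delta_0$ condition $\exists w\in a\,(z\in w)$. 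The same pattern---gather into a set, absorb it into a member $Y\in M$ by almost universality, then separate by a $\Delta_0$ condition---yields the power set. A separate induction, exploiting that ranks are computed by a $\Delta_0$-recursion available once $\Delta_0$-separation and almost universality are in hand, shows $\ORD\of M$ and hence infinity.

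Next I would establish collection, the one axiom where almost universality does essential work. Given $a\in M$ with $M\satisfies\forall x\in a\,\exists y\,\varphi(x,y)$, for each $x\in a$ select a witness $y$ of minimal rank, forming a set $W\of M$ with no appeal to \AC; by almost universality $W\of Y\in M$, and this $Y$ serves as the collecting set. Full separation then follows from collection together with union and pairing by the standard derivation. This completes the verification that $M\satisfies\ZF$.

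Finally, for expressibility: ``closed under the \Godel\ operations'' is a finite conjunction of first-order assertions of the form $\forall x,y\in M\,(G_i(x,y)\in M)$, and ``almost universal'' is the single first-order formula $\forall x\,(x\of M\to\exists y\in M\,x\of y)$; both use only $M$ as a class parameter, so their conjunction is one first-order statement about $M$, in contrast to the naive scheme $M\satisfies\psi$ ranging over all axioms $\psi$. The main obstacle is the Normal Form Theorem itself---a meta-theoretic induction on the syntax of $\Delta_0$ formulas showing that every bounded-definable relation is generated by the finite operation list---together with the verification that ranks, and hence $\ORD\of M$, are correctly computed inside $M$. The derivations of the individual axioms are routine once $\Delta_0$-separation and almost universality are in place.
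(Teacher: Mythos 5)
Your argument is correct and is the standard proof of this result: the paper gives no proof of its own, simply citing Jech's Theorem 13.9, and your route---\Godel\ normal form yielding $\Delta_0$-separation, almost universality supplying the bounding sets for union, power set, infinity and collection (with minimal-rank witnesses in place of \AC), and full separation recovered from $\Delta_0$-separation plus collection---is essentially the argument in that cited source. The only steps deserving slightly more care are the claim $\ORD\of M$ (for instance, note that $\ORD\intersect M$ is a transitive class of ordinals, and if it were an ordinal $\delta$ then almost universality together with $\Delta_0$-separation would place an ordinal $\geq\delta$ inside $M$, a contradiction) and the precise bookkeeping in the normal form theorem for bounded quantifiers, both of which are routine.
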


In light of all these observations, let us adopt the
following conventions as a matter of definition. For a
transitive proper class $M$, by the phrase ``$M$ satisfies
ZF,'' we mean that $M$ satisfies the properties mentioned
in lemma \ref{Lemma.ZFFirstOrderExpressible}; by ``$M$
satisfies ZFC,'' we mean that $M$ satisfies these and also
the axiom of choice. When $M$ and $N$ are transitive proper
class models of \ZF, then by the phrase ``$j:M\to N$ is an
elementary embedding,'' we mean that $j$ is a
$\Delta_0$-elementary cofinal embedding, which can be
expressed by a single \NGBC\ assertion. The embedding $j$
is {\df trivial} if $M=N$ and $j$ is the identity map, and
otherwise it is {\df nontrivial}. By these conventions, the
Kunen inconsistency is expressed by a single assertion of
\NGBC\ set theory. By similar means, it can be equivalently
formulated in $\ZFC(j)$ set theory as the assertion that
$j$ is not a nontrivial elementary embedding from $V$ to
$V$.

We conclude this section by recalling the existence of
critical points for the embeddings we shall subsequently
consider.

\begin{lemma}\label{Lemma.CriticalPoint}
Suppose that $j: M \to N$ is a nontrivial elementary
embedding of transitive class models $M$ and $N$ satisfying
\ZF\ and having the same ordinals. If either
\begin{enumerate}
 \item $M\satisfies\AC$, or
 \item $N\of M$, or
 \item $M\of N$ and $M$ is definable in $N$ without
     parameters or with parameters in the range of $j$,
\end{enumerate}
then there
is a least ordinal $\kappa$ such that
$\kappa<j(\kappa)$. Furthermore, $\kappa$ is a regular
uncountable cardinal in $M$.
\end{lemma}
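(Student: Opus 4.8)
The plan is to prove the two assertions in turn. For the existence of $\kappa$, the key reduction is that a least ordinal with $\kappa<j(\kappa)$ exists as soon as $j$ moves some ordinal at all: since $j$ preserves $\in$ it is order preserving on ordinals, and if some ordinal were sent strictly downward then a least such $\alpha$ would give $j(j(\alpha))<j(\alpha)$ with $j(\alpha)<\alpha$, contradicting the minimality of $\alpha$ applied to the smaller ordinal $j(\alpha)$; hence $j(\alpha)\geq\alpha$ for all $\alpha$, and the least ordinal moved is automatically the least with $\kappa<j(\kappa)$. So the whole content of the first assertion is that, in each of the three cases, if $j$ fixes every ordinal then $j$ is trivial. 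I would first record the one observation common to all three cases: if $j$ fixes every ordinal then it fixes every set of ordinals, since for $A\of\ORD$ in $M$ the members of $A$ and of $j(A)$ are exactly the same ordinals, whence $A=j(A)$.

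Under $M\satisfies\AC$ I would finish by coding. Every $x\in M$ is the transitive collapse of a well-founded extensional relation $E$ on some ordinal $\gamma$, and $E$ is itself coded by a set of ordinals via a pairing function; then $j$ fixes that code and fixes $\gamma$, so by elementarity $j(x)$ is the transitive collapse of $(j(\gamma),j(E))=(\gamma,E)$, which by uniqueness of the Mostowski collapse is again $x$. As $x$ was arbitrary, $j$ is the identity on $M$ and $M=N$.

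For cases (2) and (3) I would run a single minimal-rank argument, assuming toward a contradiction that $j$ fixes every ordinal but moves some set; let $\alpha$ be least such that some $x\in M$ of rank $\alpha$ has $j(x)\neq x$, so that $j$ fixes every element of $M$ of rank below $\alpha$. Then each $y\in x$ is fixed, giving $x\of j(x)$, and $j(x)\of V_{j(\alpha)}^N=V_\alpha^N$ consists of sets of rank below $\alpha$; any such element that happens to lie in $M$ is therefore fixed and hence, by elementarity, already a member of $x$. When $N\of M$ every element of $j(x)$ automatically lies in $M$, so this shows $j(x)\of x$ and hence $j(x)=x$, the desired contradiction. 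The case $M\of N$ is the delicate one and is where I expect the main obstacle, since now $j(x)$ could a priori contain elements of $N\setminus M$ of smaller rank, which the rank bookkeeping alone cannot exclude. Here I would use the definability of $M$: writing $M=\{z:N\satisfies\varphi(z,\vec q)\}$ with $\vec q=j(\vec p)$ and $\vec p\in M$, elementarity of $j$ gives $M\satisfies\varphi(y,\vec p)$ exactly when $j(y)\in M$; since each $y\in x$ is fixed we have $j(y)=y\in M$ by transitivity of $M$, so $M\satisfies\forall y\in x\,\varphi(y,\vec p)$, and reflecting this across $j$ yields $N\satisfies\forall z\in j(x)\,\varphi(z,\vec q)$, that is $j(x)\of M$. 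This is exactly the point at which the hypothesis that the parameters lie in the range of $j$ is needed, and once $j(x)\of M$ is secured the previous computation applies verbatim to give $j(x)\of x$ and the contradiction. In each case $j$ is then the identity on $M$, so $N=\Union j\image M=\Union M=M$ and $j$ is trivial.

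Finally, for the second assertion I would use only that $j$ fixes every ordinal below $\kappa$ while $j(\kappa)>\kappa$. If $\kappa$ were not a cardinal of $M$ there would be a bijection $f\colon\delta\to\kappa$ in $M$ with $\delta<\kappa$; then $j(f)\colon\delta\to j(\kappa)$ is surjective in $N$, yet $j(f)(\xi)=f(\xi)<\kappa$ for every $\xi<\delta$, contradicting surjectivity onto $j(\kappa)>\kappa$. The identical computation applied to a cofinal map refutes $\mathrm{cf}^M(\kappa)<\kappa$, so $\kappa$ is regular in $M$; and since every finite ordinal and $\omega$ itself are definable and hence fixed by $j$, we have $\kappa>\omega$, so $\kappa$ is uncountable. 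None of these final computations uses the axiom of choice.
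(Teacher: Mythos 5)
Your argument is correct. For cases (1) and (2) and for the ``furthermore'' clause, the paper simply cites the standard texts, and your arguments there are the standard ones: coding sets by sets of ordinals under $\AC$ in case (1), and the least-rank-moved argument in case (2). For case (3), the only part the paper proves in detail, your route is organized genuinely differently. The paper runs a single induction on the rank of $u\in N$, proving simultaneously that $u\in M$ and $j(u)=u$, so that the containment $V_\alpha^N\of M$ is available at each stage (which is what handles the members of $j(u)$), and the definability of $M$ is invoked only to rule out sets of rank $\alpha$ in $N\setminus M$. You instead take a minimal counterexample $x\in M$ to $j(x)=x$, and the definability hypothesis enters at a different point, namely to show $j(x)\of M$ by pushing $\forall y\in x\,\varphi(y,\vec p)$ across $j$; the identity $M=N$ then falls out only at the very end, from the cofinality of $j$, rather than being established rank by rank. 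Both uses of the hypothesis that the parameters lie in $\ran(j)$ are essential and correct; your version has the mild advantage of isolating exactly where definability is needed (to confine $j(x)$ to $M$), while the paper's simultaneous induction yields the stronger intermediate fact $V_\alpha^N\of M$ for every $\alpha$ along the way. Your concluding computations for regularity and uncountability of $\kappa$ are the standard ones and are fine.
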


\begin{proof}
Statements (1) and (2) are handled in the standard set
theory texts, such as \cite[p.
45]{Kanamori2004:TheHigherInfinite2ed}, where we refer the
reader for details. For statement (3), note first that this
claim is technically a scheme, making the assertion
separately of every possible definition of $M$ in $N$. To
see that it is true, suppose that $M$ is a transitive class
defined in $N$ by $x\in M\iff\varphi(x, j(b))$, and that
$j:M\to N$ is elementary and
$j(\alpha)=\alpha$ for every ordinal $\alpha$. We shall
argue that $j$ is trivial, meaning $M=N$ and $j$ is the
identity. For this, we prove that $u\in M$ and $j(u)=u$ by
induction on the $\in$-rank of $u\in N$. Suppose that this
is true for all elements of $N$ of lower rank than a set
$u\in N$, having rank $\alpha$. If $u\in M$, then by our
induction assumption we have $j(v)=v$ for all $v\in u$, and
so $u\of j(u)$. Conversely, if $w\in j(u)$, then since
$j(\alpha)=\alpha$, it follows that $w$ has rank less than
$\alpha$, and so by induction we conclude that $w\in M$ and
$j(w)=w$, from which it follows that $w\in u$. So $j(u)=u$,
as desired. It remains to see that every $u$ in $N$ of rank
$\alpha$ is in $M$. If not, then $N$ thinks there is some
$w$ of rank $\alpha$ with $\neg\varphi(w, j(b))$. Since $j$
fixes $\alpha$, this implies by elementarity that
$M$ thinks there is $u$ of rank $\alpha$ with
$\neg\varphi(u,b)$. We have already proved that $j(u)=u$
for all such $u$ in $M$. Thus, by elementarity again, $N$
satisfies $\neg\varphi(u, j(b))$, contrary to the fact that
$u\in M$. So we have proved that $M=N$ and $j$ is the
identity, as desired.
\end{proof}

We note that it is not universally true that nontrivial
embeddings of transitive \ZF\ models must have critical
points. Indeed, it is perfectly possible to have a nontrivial elementary embedding $j:M\to N$ of transitive class models of \ZF\ with no critical point.
Andr\'es Caicedo \cite{Caicedo2003:Dissertation}, for example, proves that
if $V[G]$ is the forcing extension obtained by adding $\omega_1$ many Cohen reals, and $G_0$ is the filter obtained from an uncountable subcollection of them, then there is an elementary embedding $j:L(\R^{V[G_0]})\to L(\R^{V[G]})$ which is the identity on the ordinals (and a complete argument appears on mathoverflow at \cite{MO81568:ElementaryEmbeddings}). Neeman and Zapletal
\cite{NeemanZapletal1998:ProperForcingAndAbsolutenessInL(R)},
\cite{NeemanZapletal2001:ProperForcingAndL(R)} showed that if there is a weakly compact Woodin
cardinal, then in every small proper forcing extension
$V[G]$, there is an elementary embedding $j:L(\R^V)\to
L(\R^{V[G]})$ that fixes every ordinal, and therefore has
no critical point, but if reals are added, then since
$j(\R^V)=\R^{V[G]}$, the embedding must be nontrivial.

\section{Elementary embeddings between V and V[G]}\label{Section.jbetweenVandV[G]}

Let us now begin to generalize the Kunen inconsistency by
considering the possibility that nontrivial elementary
embeddings $j$ might be added by forcing. For example,
perhaps one might think that in special circumstances,
there could be a forcing extension $V[G]$ in which we could
find a nontrivial elementary embedding $j:V[G]\to V$ or
conversely $j:V\to V[G]$.  The case of an embedding
$j:V[G]\to V$ is quite natural to consider, because from
the perspective of the forcing extension $V[G]$, this would
be an embedding of the universe into a certain transitive
class, a situation that na\"ively resembles the typical
large cardinal situation. And the question of whether there
can be embeddings $j:V\to V[G]$ has arisen independently
several times in the set-theoretic community. Woodin ruled
out embeddings of the form $j:V[G]\to V$, as we shall
presently explain in theorem \ref{Theorem.Noj:V[G]toV} and
corollary \ref{Corollary.VisNotForcingExtensionOfM}. A
generalization of his method, in theorem
\ref{Theorem.Noj:VtoV[G]}, rules out the converse sort of
embedding $j:V\to V[G]$. These results directly generalize
the Kunen inconsistency, which is simply the case of
trivial forcing $V[G]=V$. Furthermore, they are themselves
generalized by and special cases of theorem
\ref{Theorem.NojBetweenGrounds}, asserting that there is no
nontrivial elementary embedding between two set-forcing ground models
of the universe, a result that will itself be generalized
in subsequent sections.

Although logically we could skip to the most general
results, we find the historical progression informative and
also prefer to outline the basic methods in their easiest
cases, as the generalizations eventually become complex.
Nevertheless, we shall economize by giving only brief
accounts in the parts of later arguments that follow a
method we will have had explained earlier in detail.

In the following, let $\Cof_\delta$ be the class of
ordinals having cofinality $\delta$ and
$\Cof_\delta\gamma=\gamma\intersect\Cof_\delta$ be the set
of such ordinals below $\gamma$. We shall make extensive
use of the Ulam-Solovay stationary partition theorem, asserting that every stationary subset of a
regular uncountable cardinal $\kappa$ can be partitioned
into $\kappa$ many stationary subsets (see \cite[p.
95]{Jech:SetTheory3rdEdition}), a result proved for successor cardinals by Ulam and extended to all cardinals by Solovay. Note that a mild
generalization of the theorem shows that if $\omega <
\kappa = \cof(\lambda)$, then every stationary subset of
$\lambda$ can be partitioned into $\kappa$ many disjoint
stationary sets. To see this, pick a normal
$\kappa$-sequence, $F$, in $\lambda$. Then $C\of \kappa$ is
a club in $\kappa$ if and only if $F\image C$ is a club in
$\lambda$, and $S$ is stationary in $\kappa$ if and only if
$F \image S$ is a stationary in $\lambda$. The
generalization follows easily.

\begin{theorem}[Woodin]\label{Theorem.Noj:V[G]toV}
In any set-forcing extension $V[G]$, there is no nontrivial
elementary embedding $j:V[G]\to V$.
\end{theorem}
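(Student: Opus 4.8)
The plan is to derive a contradiction from a hypothetical nontrivial elementary embedding $j:V[G]\to V$ by exploiting the mismatch between what is genuinely stationary and what the embedding "thinks" is stationary at the critical point. First I would invoke Lemma~\ref{Lemma.CriticalPoint} to obtain the critical point. Here case (2) applies: since $V\of V[G]$, the target $N=V$ is contained in the domain $M=V[G]$, so there is a least ordinal $\kappa$ with $\kappa<j(\kappa)$, and $\kappa$ is a regular uncountable cardinal in $V[G]$. (It is worth noting at the outset that the forcing $\P$ is a set, so $\P\in V_\theta$ for some $\theta$, and above $|\P|$ the forcing adds no new subsets to regular cardinals; I expect the critical point $\kappa$ to lie safely above $|\P|$, so that stationarity is not disturbed by passing between $V$ and $V[G]$.)

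The heart of the argument is a fixed-point/stationary-partition trick, following the Kunen template. I would fix a regular cardinal $\lambda>\kappa$ in the range of the relevant structure and consider $\Cof_\omega\lambda$, the ordinals of cofinality $\omega$ below $\lambda$. By the Ulam–Solovay partition theorem (and its mild generalization noted in the text), I can partition a stationary set into $\kappa$ many disjoint stationary pieces $\langle S_\alpha : \alpha<\kappa\rangle$ inside $V[G]$. Applying $j$ and using elementarity, $j$ sends this sequence to a sequence $\langle T_\beta : \beta < j(\kappa)\rangle$ of disjoint stationary subsets in $V$, with $T_{j(\alpha)}=j(S_\alpha)$ agreeing with $S_\alpha$ on a tail (since $j$ fixes ordinals below $\kappa$ and $S_\alpha$ consists of small-cofinality ordinals). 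The key object is the piece $T_\kappa$ indexed by the critical point itself, which exists because $\kappa<j(\kappa)$. The aim is to show that $T_\kappa$ must be stationary in $V$ yet, by a counting or closure argument, cannot be, because $j$ "uses up" all the stationary mass below $\lambda$ on the indices in the range of $j$.

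More precisely, the mechanism I would use is the standard one: choose a continuous increasing sequence (a club $C$) and look at how $j$ acts on it. Because $\P$ is small relative to $\kappa$, every club of $\lambda$ in $V$ remains club in $V[G]$ and conversely every club in $V[G]$ contains a club from $V$; hence stationarity of these high-cofinality sets is absolute between $V$ and $V[G]$. This absoluteness is what lets me transfer the contradiction: a set that $V$ believes is stationary (namely $T_\kappa$, as a member of the $V$-sequence $j$ produces) would have to reflect down to a genuinely stationary set, but the disjointness of the $\kappa$-many pieces together with the fact that $j\image\kappa$ is cofinal in no initial segment containing $\kappa$ forces $T_\kappa$ to miss a club, a contradiction.

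\textbf{The main obstacle} I anticipate is managing the interaction between forcing and stationarity with complete rigor — specifically, verifying that the critical point and the relevant $\lambda$ can be chosen above $|\P|$ so that stationary sets of $\omega$-cofinality ordinals are preserved and reflected correctly in both directions between $V$ and $V[G]$. The elementarity of $j$ is only available as the $\Delta_0$-elementary cofinal embedding guaranteed by Lemma~\ref{Lemma.Gaifman}, so I must be careful that every assertion I transport across $j$ (stationarity, the partition property, regularity of $\kappa$) is expressible and provably preserved in this setting; in particular I cannot simply quantify over all clubs in a single first-order formula, and must instead phrase the stationary-partition facts so that they survive the meta-theoretic induction underlying the elementarity scheme. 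Once the small-forcing preservation of stationarity is pinned down, the combinatorial core reduces to the classical Kunen contradiction applied to the piece $T_\kappa$.
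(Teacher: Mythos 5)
Your overall template is the right one—critical point, Ulam--Solovay partition into $\kappa$ many stationary pieces, the distinguished piece $j(\vec S)(\kappa)$, and a club of closure points of $j$ that must be refined to a club lying in $V$—and this is essentially the paper's argument. But there are genuine gaps. The most serious is that you never arrange $j(\lambda)=\lambda$ (and hence $j(\lambda^\plus)=\lambda^\plus$). You simply ``fix a regular cardinal $\lambda>\kappa$,'' but without a fixed point the image sequence $j(\vec S)$ consists of stationary subsets of $j(\lambda^\plus)$ rather than of $\lambda^\plus$, and the whole intersection argument with the closure club below $\lambda^\plus$ collapses. The paper devotes a full paragraph to this: starting from $\delta_0>\max(\kappa,|\P|)$ of countable cofinality, iterate $\delta_{n+1}=j(\delta_n)$ and take $\lambda=\sup_n\delta_n$, so that $j(\lambda)=\lambda$ with $\cof(\lambda)=\omega$ in both models; then $\lambda>|\P|$ gives the $\lambda$-c.c.\ and hence $j(\lambda^\plus)=\lambda^\plus$. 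This step cannot be skipped.

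Two further points would fail as written. First, you repeatedly assume the critical point lies above $|\P|$ (``I expect the critical point $\kappa$ to lie safely above $|\P|$,'' ``Because $\P$ is small relative to $\kappa$''); there is no justification for this, and none is needed—what matters is that $\lambda$ and $\lambda^\plus$ are chosen above $|\P|$, which you can always arrange, so that clubs of $\lambda^\plus$ in $V[G]$ contain clubs from $V$ and the cardinal $\lambda^\plus$ is absolute. Second, your claim that $T_{j(\alpha)}=j(S_\alpha)$ agrees with $S_\alpha$ ``on a tail, since $j$ fixes ordinals below $\kappa$ and $S_\alpha$ consists of small-cofinality ordinals'' is false: the elements of $S_\alpha$ live below $\lambda^\plus$, far above $\kappa$, and having cofinality $\omega$ does not make them fixed by $j$. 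The correct statement is that $S_\alpha$ and $j(S_\alpha)$ agree on the club $C=\set{\beta<\lambda^\plus\st j\image\beta\of\beta}$ of closure points, because an $\omega$-cofinal closure point is fixed. Relatedly, the final contradiction should not be a vague ``counting or closure argument'': one picks $\beta\in D\intersect S^*$ with $D\in V$ a club inside $C$, deduces $j(\beta)=\beta$ from $\cof(\beta)=\omega$ and $j\image\beta\of\beta$, and observes that $\beta$ then lies in both $j(\vec S)(\kappa)$ and $j(\vec S)(\alpha)$ for the $\alpha<\kappa$ with $\beta\in S_\alpha$, contradicting disjointness.
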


\begin{proof} Suppose
towards contradiction that $V[G]$ is a set-forcing
extension of $V$, obtained by forcing with $\P\in V$ to add
the $V$-generic filter $G\of\P$, and that $j:V[G]\to V$ is
a nontrivial elementary embedding in $V[G]$, where $j$ is a
class in $V[G]$. By lemma \ref{Lemma.CriticalPoint} applied
in $V[G]$, it follows that $j$ has a critical point,
$\kappa$, which of course will be a measurable cardinal in
$V[G]$.

First, we find an ordinal $\lambda$ above both $\kappa$ and
$|\P|$ such that $j(\lambda)=\lambda$ and
$\cof(\lambda)^V=\cof(\lambda)^{V[G]}=\omega$. To find
$\lambda$, fix any ordinal $\delta_0$ above $\kappa$ and
$|\P|$ with $\cof(\delta_0) = \omega$ in $V$. If
$j(\delta_0)=\delta_0$, then we have found what we wanted.
Otherwise, $\delta_0<j(\delta_0)$ and we may recursively
define $\delta_{n+1}=j(\delta_n)$ and
$\lambda=\sup\set{\delta_n\st n\in\omega}$. Since $\lambda$
is the supremum of the increasing $\omega$-sequence
$\<\delta_n\st n<\omega>$, it follows that $j(\lambda)$ is
the supremum of $j(\<\delta_n\st
n<\omega>)=\<j(\delta_n)\st n<\omega>=\<\delta_{n+1}\st
n<\omega>$. Thus, $j(\lambda)=\lambda$ and
$\cof(\lambda)=\omega$ in both $V[G]$ and $V$, as desired.

Since $\lambda>|\P|$, it follows that $\P$ satisfies the
$\lambda$-c.c.~in $V$ and so $\lambda^\plus$ has the same
meaning in both $V$ and $V[G]$. Thus,
$j(\lambda^\plus)=\lambda^\plus$. The set
$\Cof_\omega\lambda^\plus$ of $V[G]$ is stationary in
$V[G]$, and so by the Ulam-Solovay Theorem, we may partition it
into $\kappa$ many disjoint stationary sets:
$\Cof_\omega\lambda^\plus=\bigsqcup_{\alpha<\kappa}
S_\alpha$ in $V[G]$. Let $\vec
S=\<S_\alpha\st\alpha<\kappa>$ and consider $S^*=j(\vec
S)(\kappa)$. This is a stationary subset of
$(\Cof_\omega\lambda^\plus)^V$ in $V$, and it is disjoint
from $j(S_\alpha)$ for all $\alpha<\kappa$. Let
$C=\set{\beta<\lambda^\plus\st j\image\beta\of\beta}$,
which is a club subset of $\lambda^\plus$ in $V[G]$. Since
$\lambda^\plus$ is above the size of the forcing, it
follows that there is a club subset $D\of C$ with $D\in V$.
Thus, there is some $\beta\in D\intersect S^*$. Since
$S^*\of\Cof_\omega\lambda^\plus$, it follows that
$\cof(\beta)=\omega$ in $V$ and hence also in $V[G]$. Since
$j\image\beta\of\beta$ and $\cof(\beta)=\omega$, it follows
easily that $j(\beta)=\beta$. But since
$\beta\in(\Cof_\omega\lambda^\plus)^{V[G]}$, there must be
some $\alpha<\kappa$ with $\beta\in S_\alpha$, since these
sets form a partition. Thus, $\beta=j(\beta)\in
j(S_\alpha)=j(\vec S)(j(\alpha))$ and $\beta\in S^*=j(\vec
S)(\kappa)$, contrary to the fact that the sets appearing
in $j(\vec S)$ are disjoint. So there can be no such
embedding $j$, and the proof is complete.\end{proof}

An equivalent formulation of theorem
\ref{Theorem.Noj:V[G]toV}, stated from the point of view of
the extension, is the following.

\begin{corollary}\label{Corollary.VisNotForcingExtensionOfM}
If $j:V\to M$ is a nontrivial elementary embedding in $V$,
then $V$ is not a set-forcing extension of $M$.
\end{corollary}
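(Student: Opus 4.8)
The plan is to recognize that this corollary is simply Theorem \ref{Theorem.Noj:V[G]toV} restated from the vantage point of the ground model rather than the extension, so the ``proof'' consists mainly in making the relabeling explicit. I would argue by contradiction. Suppose toward contradiction that the corollary fails, so that there is a nontrivial elementary embedding $j:V\to M$ that is a class of $V$, and suppose moreover that $V$ is a set-forcing extension of $M$, say $V=M[G]$ for some $M$-generic filter $G\of\P\in M$. The aim is to exhibit this as exactly the configuration already ruled out.

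The key observation is that since $V=M[G]$ is a set-forcing extension of $M$, the class $M$ is a transitive proper class model of \ZFC\ containing all the ordinals, and $V$ is a genuine set-forcing extension of it. Thus, letting $M$ play the role that ``$V$'' plays in Theorem \ref{Theorem.Noj:V[G]toV}, and letting $V=M[G]$ play the role of the forcing extension ``$V[G]$,'' the map $j:V\to M$ becomes precisely a nontrivial elementary embedding from a set-forcing extension to its ground model. Moreover, this embedding is a class of that extension, since by hypothesis $j$ is a class of $V=M[G]$. This is exactly the situation forbidden by Theorem \ref{Theorem.Noj:V[G]toV}, and so we reach a contradiction, showing that $V$ cannot be a set-forcing extension of $M$ whenever such a $j$ exists.

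The only point requiring care---and it is a matter of bookkeeping rather than a genuine obstacle---is to confirm that the proof of Theorem \ref{Theorem.Noj:V[G]toV} is conducted entirely in terms of the extension, its ground, the forcing $\P$, the critical point, and the cofinality and stationary-partition arguments, all of which transfer verbatim when $M$ and $M[G]$ replace the ground and extension of the original statement. In particular, the domain $V=M[G]$ of $j$ satisfies \AC, so condition (1) of Lemma \ref{Lemma.CriticalPoint} applies to supply a critical point $\kappa$ for $j$, exactly as the original proof obtains its critical point by applying that lemma in the extension. With this identification in place, the argument of Theorem \ref{Theorem.Noj:V[G]toV} delivers its contradiction in the present setting, which establishes the corollary.
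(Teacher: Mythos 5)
Your proposal is correct and matches the paper's treatment: the paper offers no separate proof, simply observing that the corollary is Theorem \ref{Theorem.Noj:V[G]toV} restated from the point of view of the extension, which is precisely the relabeling you carry out. Your extra care in checking that $j$ is a class of the extension $V=M[G]$ and that the critical-point lemma applies is sound but not a departure from the paper's argument.
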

\noindent In other words, if $j:V\to M$, then $M$ is not a set-forcing
ground model of $V$.

Let us turn now to the converse sort of embedding, from $V$
to $V[G]$, which we shall rule out by a generalization of
the method. In the case where $V[G]$ is the target of the
embedding rather than its domain, it no longer suffices to
consider $\Cof_{\omega} {\lambda^+}$ in the domain of the
embedding, because an ordinal of cofinality $\omega$ in
$V[G]$ might have a higher cofinality in $V$, and the
argument breaks down when applied directly. We shall repair
this issue by considering a stationary set of ordinals
having much larger cofinality. Another subtle point is that
the previous argument to obtain the fixed point $\lambda$
no longer succeeds exactly as before in the new context,
because the sequence $\langle \delta_n \st
n\in\omega\rangle$ may not be an element of $V$; but again
a repair will be provided by considering a longer sequence.

Attribution for this next theorem is not clear to us. Woodin reportedly proved it along with theorem \ref{Theorem.Noj:V[G]toV} while he was a graduate student in the early 1980s. But also, Matt Foreman mentioned to the first author that he discussed a version of the theorem with
Mack Stanley and Sy Friedman around the same time, but their proof
was evidently different than ours, and unfortunately the result was not published.\footnote{Part of their focus was reportedly on
the extent to which the result generalized to class
forcing. For example, they considered the case of class
forcing extensions by amenable class forcing. Foreman
mentioned that Woodin has an example of forcing using a
class version of non-stationary tower forcing where $j:V\to
V[G]$, but $V[G]$ does not have \ZFC\ for the predicate
$V$, a result cited in \cite[p.1594]{Suzuki1999:NoDefinablejVtoVinZF} and in \cite[p. 1091]{VickersWelch2001:jMtoV}.} Suzuki proved a theorem implying our theorem \ref{Theorem.Noj:VtoV[G]} in \cite[p. 344]{Suzuki1998:NojVtoVinV[G]}, using a technique essentially the same as ours.\footnote{Suzuki proved that if there is $j: V \to M$ in $V[G]$, then $V \nsubseteq M$. This result is stronger than our theorem \ref{Theorem.Noj:VtoV[G]}, but weaker than our theorem \ref{Theorem.NojMtoNifStationaryCorrect}. Although Suzuki states in his introduction that his proof only concerns definable $j$, in fact his proof never uses that fact and can be formalized in NGBC.}
The question
about such embeddings has arisen several times since then,
however, and so we are pleased to provide a proof here.

\begin{theorem}\label{Theorem.Noj:VtoV[G]}
In any set-forcing extension $V[G]$, there is no nontrivial
elementary embedding $j:V\to V[G]$.
\end{theorem}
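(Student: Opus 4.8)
The plan is to replay the stationary-partition argument of theorem \ref{Theorem.Noj:V[G]toV} with the ground model and the extension interchanged, installing the two repairs anticipated above. Since the domain $V$ satisfies \AC, lemma \ref{Lemma.CriticalPoint} supplies a critical point $\kappa$, so $\kappa<j(\kappa)$. The device that drives both repairs is a single well-chosen ordinal $\delta$ that is at once a regular cardinal with $\delta>|\P|$ and a fixed point of $j$, meaning $j(\delta)=\delta$. Such a $\delta$ does double duty. On one hand, since $\delta>|\P|$ the poset $\P$ is $\delta$-c.c., so cofinality $\delta$ is preserved between $V$ and $V[G]$, and in particular $\cof(\beta)^{V[G]}=\delta$ entails $\cof(\beta)^V=\delta$ for every ordinal $\beta$. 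On the other hand, since $\delta=j(\delta)$ is regular, the image $j\image\delta$ is a subset of $\delta$ of order type $\delta$ and hence cofinal in $\delta$. The second feature is what lets me upgrade mere closure under $j$ to genuine fixedness even when $\delta$ exceeds $\kappa$: if $\beta$ is closed under $j$ with $\cof(\beta)^V=\delta$, then taking a $V$-cofinal sequence $\<\gamma_\xi\st\xi<\delta>$ in $\beta$, the subfamily $\set{j(\gamma_\xi)\st\xi<\delta}$ is cofinal in $j(\beta)$ (being indexed by the cofinal set $j\image\delta$ inside $j$ of the sequence), while each $j(\gamma_\xi)<\beta$ by closure; thus $j(\beta)=\beta$.

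Granting such a $\delta$, together with a fixed point $\lambda>\delta$ with $j(\lambda)=\lambda$ and $\lambda>|\P|$ (so that $j(\lambda^\plus)=\lambda^\plus$, as $\lambda^\plus$ is computed alike in $V$ and $V[G]$), the contradiction would proceed as follows. Partition $(\Cof_\delta\lambda^\plus)^V$ in $V$ into $\kappa$ many disjoint stationary sets, $\Cof_\delta\lambda^\plus=\bigsqcup_{\alpha<\kappa}S_\alpha$, put $\vec S=\<S_\alpha\st\alpha<\kappa>$, and form $S^*=j(\vec S)(\kappa)$, which is legitimate since $\kappa<j(\kappa)$. By elementarity $j(\vec S)$ partitions $(\Cof_{j(\delta)}\lambda^\plus)^{V[G]}=(\Cof_\delta\lambda^\plus)^{V[G]}$ into stationary pieces in $V[G]$, so $S^*$ is stationary in $V[G]$ and disjoint from each $j(S_\alpha)=j(\vec S)(\alpha)$. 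The set $C=\set{\beta<\lambda^\plus\st j\image\beta\of\beta}$ is club in $\lambda^\plus$ in $V[G]$, so I may choose $\beta\in C\intersect S^*$. Here the first repair bites: since $\beta\in S^*$ gives $\cof(\beta)^{V[G]}=\delta>|\P|$, I obtain $\cof(\beta)^V=\delta$, so $\beta$ lands in the $V$-partition and $\beta\in S_\alpha$ for some $\alpha<\kappa$. The second repair then yields $j(\beta)=\beta$ as above, whence $\beta=j(\beta)\in j(S_\alpha)=j(\vec S)(\alpha)$, contradicting $\beta\in S^*=j(\vec S)(\kappa)$ and the disjointness of the pieces of $j(\vec S)$. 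Pleasantly, both the club $C$ and the stationary set $S^*$ now live in $V[G]$, so unlike in theorem \ref{Theorem.Noj:V[G]toV} there is no need to thin $C$ down to a ground-model club.

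First, though, I must manufacture the fixed points, and this is the second repair, where the real work lies. The naive choice $\lambda_0=\sup_n j^n(\eta)$ is a closure point of $j$ above any prescribed $\eta$, but the witnessing $\omega$-sequence is a class of $V[G]$ rather than of $V$, so one cannot conclude that $j$ is continuous at $\lambda_0$, and the cofinality-$\omega$ fixed point used in theorem \ref{Theorem.Noj:V[G]toV} is simply unavailable. What one can say is that $\cof(\lambda_0)^{V[G]}=\omega$, so by the $|\P|^\plus$-c.c.\ of $\P$ the $V$-cofinality of $\lambda_0$ is at most $|\P|$. When $|\P|<\kappa$ this already finishes the construction: then $\cof(\lambda_0)^V<\kappa$, so $j$ is continuous at $\lambda_0$ because its $V$-cofinal sequence has length below $\kappa$, on which $j$ is the identity; hence $j(\lambda_0)=\lambda_0$, and taking $\eta=|\P|^\plus$ and then $\delta=\lambda_0^\plus$ furnishes a regular fixed point above $|\P|$, with larger fixed points $\lambda$ produced the same way.

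The hard part, and the step I expect to be the main obstacle, is securing such a fixed point, equivalently a closure point of small $V$-cofinality above $|\P|$, when the forcing is large relative to the critical point, that is, $|\P|\ge\kappa$. In that regime the closure points form a club of $V[G]$, but a $V[G]$-club can steer entirely clear of the ordinals of small $V$-cofinality, since collapsing forcing may render those non-stationary in $V[G]$, so continuity of $j$ at the available closure points is not automatic and the bootstrap above stalls. This is precisely the difficulty the longer-sequence repair must overcome: one wants to build $\lambda$ as the supremum of a sequence of length a regular cardinal above $|\P|$ whose supremum is forced to retain that cofinality in both models, and then to certify $j(\lambda)=\lambda$ by arranging that cofinality to be itself a fixed point of $j$. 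Threading this bootstrap, obtaining the very first regular fixed-point cofinality $\delta>|\P|$ without already having one in hand, is the crux; once it is in place, the partition argument above delivers the contradiction uniformly.
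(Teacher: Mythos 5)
Your overall architecture is the right one and, modulo relabeling, is the paper's: secure a fixed point $\lambda$ of $j$ above $|\P|$ and $\kappa$ together with a regular fixed point $\delta>|\P|$ (the paper uses $\delta=\lambda^\plus$ and partitions $\Cof_{\lambda^\plus}\lambda^\plusplus$ where you would partition $\Cof_\delta\lambda^\plus$), partition in $V$ into $\kappa$ many stationary pieces, and derive the contradiction at some $\beta\in C\intersect S^*$; your observations that cofinality $\delta>|\P|$ is absolute between $V$ and $V[G]$, that closure plus $V$-cofinality $\delta=j(\delta)$ upgrades to $j(\beta)=\beta$, and that no thinning of $C$ to a ground-model club is needed here, are all correct. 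But the proof is incomplete at exactly the point you flag: you never actually produce a fixed point of $j$ above $|\P|$ when $|\P|\geq\kappa$. You correctly note that $\lambda_0=\sup_n j^n(\eta)$ is only a closure point whose witnessing $\omega$-sequence lives in $V[G]$, you bound $\cof(\lambda_0)^V$ by $|\P|$, and you settle the case $|\P|<\kappa$; for the remaining case you describe what a repair would have to accomplish and call it ``the crux,'' which is an accurate diagnosis but not an argument. Without that first fixed point the partition argument never starts, so there is a genuine gap.

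The missing idea is a chain-condition argument applied to a \emph{long continuous} sequence of closure points, not to a single $\omega$-iteration. Working in $V[G]$, set $\delta_{\alpha+1}=j(\delta_\alpha+1)$ and take suprema at limits; then $C=\set{\delta_\xi\st\xi<|\P|^\plus}$ is a club in $\delta_{|\P|^\plus}$, an ordinal of cofinality $|\P|^\plus$, and every limit point of $C$ is closed under $j$. Since $\P$ is $|\P|^\plus$-c.c., this club contains a club $D$ with $D\in V$. The $\omega^{\text{th}}$ element $\lambda$ of $D$ is a limit point of $C$, hence satisfies $j\image\lambda\of\lambda$, and it is the supremum of an $\omega$-sequence that \emph{belongs to $V$} (the first $\omega$ elements of $D$); applying $j$ to that ground-model sequence and using closure gives $j(\lambda)=\lambda$ with $\cof(\lambda)^V=\omega$, above $|\P|$ and $\kappa$. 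This one move dissolves your bootstrap worry entirely --- no regular fixed-point cofinality is needed in advance, since $\lambda^\plus$ becomes a regular fixed point above $|\P|$ for free once $j(\lambda)=\lambda$ --- and it is precisely the ``longer sequence'' repair you anticipated but did not carry out.
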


\begin{proof}Suppose towards contradiction that $V[G]$ is
a set-forcing extension, obtained by forcing with $\P$ to
add a $V$-generic filter $G\of\P$, and that $j:V\to V[G]$
is an elementary embedding and a class in $V[G]$. Since
$V\satisfies\ZFC$, it follows by lemma
\ref{Lemma.CriticalPoint} that $j$ has a critical point
$\kappa$, which it is easy to see must be a regular
uncountable cardinal in $V$.

We claim as in theorem \ref{Theorem.Noj:V[G]toV} that there
is an ordinal $\lambda$ above $\kappa$ and $|\P|$ with
$\cof(\lambda)^V=\omega$ and $j(\lambda)=\lambda$. To see
this, consider the class function
$j\restrict\ORD:\ORD\to\ORD$ in $V[G]$. As before, start at
any $\delta_0$ greater than both $\kappa$ and $|\P|$ and
with $\cof(\delta_0)^V=\omega$. If $j(\delta_0)=\delta_0$,
we are done. Otherwise, define
$\delta_{\alpha+1}=j(\delta_\alpha+1)$ and
$\delta_\xi=\sup_{\alpha<\xi}\delta_\alpha$ for limit
$\xi$. Because we added $1$ at each step, this is a
strictly increasing, continuous sequence of ordinals, and
by construction, $j\image\delta_\xi\of\delta_\xi$ for any
limit $\xi$. Let $\gamma=|\P|^\plus$ and
$C=\set{\delta_\xi\st \xi<\gamma}$. This is a club subset
of $\delta_\gamma$, which has cofinality $\gamma$. Since
$\P$ satisfies the $\gamma$-c.c., it follows that there is
a club $D\of C$ with $D\in V$. Let $\lambda$ be the
$\omega^\th$ element of $D$. Thus, $\cof(\lambda)^V=\omega$
and since $\lambda\in C$, we know that
$j\image\lambda\of\lambda$. Since $\lambda$ is the supremum
of an $\omega$-sequence in $V$, it follows that
$j(\lambda)$ is the supremum of $j$ of that sequence, but
since $j\image\lambda\of\lambda$, this supremum is just
$\lambda$. So $j(\lambda)=\lambda$, as desired.

Since $j(\lambda)=\lambda$ and all cardinals above
$\lambda$ are preserved by the forcing, it follows that
$j(\lambda^\plus)=\lambda^\plus$ and
$j(\lambda^\plusplus)=\lambda^\plusplus$. Since
$\lambda^\plus$ is a regular cardinal larger than $|\P|$,
it follows that $\Cof_{\lambda^\plus}$ is absolute between
$V$ and $V[G]$. Since
$\Cof_{\lambda^\plus}\lambda^\plusplus$ is stationary, it
follows by the Ulam-Solovay theorem in $V$ that we may partition
it as a disjoint union:
$\Cof_{\lambda^\plus}\lambda^\plusplus=\bigsqcup_{\alpha<\kappa}
S_\alpha$ of $\kappa$ many stationary subsets $S_\alpha$.
Let $\vec S=\<S_\alpha\st\alpha<\kappa>$ and consider
$S^*=j(\vec S)(\kappa)$. By elementarity, this is a
stationary subset of
$\Cof_{\lambda^\plus}\lambda^\plusplus$ in $V[G]$, and
disjoint from $j(S_\alpha)$ for all $\alpha<\kappa$. Let
$C=\set{\beta<\lambda^\plusplus\st j\image\beta\of\beta}$,
which is a club subset of $\lambda^\plusplus$ in $V[G]$.
Thus, there is some $\beta\in S^*\intersect C$. In
particular, $\beta\in S^*\of\Cof_{\lambda^\plus}$ and so
$\cof(\beta)=\lambda^\plus$ in $V[G]$ and in $V$. If
$\beta$ is the supremum of $s\of\beta$, where $s$ has order
type $\lambda^\plus$, it follows that $j(\beta)$ is the
supremum of $j(s)$, having order type
$j(\lambda^\plus)=\lambda^\plus$. It follows that $j\image
s$ is unbounded in $j(s)$, and therefore since
$j\image\beta\of\beta$, it follows that the supremum of
$j(s)$ is $\beta$. Thus, $j(\beta)=\beta$. Since
$\beta\in\Cof_{\lambda^\plus}\lambda^\plusplus$, it must be
that $\beta\in S_\alpha$ for some $\alpha<\kappa$. So we
have a contradiction, just as in the proof of theorem
\ref{Theorem.Noj:V[G]toV}, since $\beta$ is in both $j(\vec
S)(\alpha)$ and in $j(\vec S)(\kappa)$, even though these
are disjoint.
\end{proof}

Stated from the perspective of the forcing extension, what
theorem \ref{Theorem.Noj:VtoV[G]} asserts is that if
$j:M\to V$ is a nontrivial elementary embedding from a
transitive class $M$ into the universe $V$, then $V$ is not
a set-forcing extension of $M$. Both theorem
\ref{Theorem.Noj:V[G]toV} and \ref{Theorem.Noj:VtoV[G]} are
therefore special cases of the following theorem.

\begin{theorem}\label{Theorem.NojBetweenGrounds}
If $M$ and $N$ are set-forcing ground models of $V$, then
there is no nontrivial elementary embedding $j : M \to N$.
\end{theorem}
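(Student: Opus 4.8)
The plan is to combine the two proof strategies of theorems \ref{Theorem.Noj:V[G]toV} and \ref{Theorem.Noj:VtoV[G]}, carrying out the whole argument inside the common extension $V$. Fix realizations $V=M[G]=N[H]$ as set-forcing extensions, where the forcing notions lie in $M$ and $N$ respectively, and let $\theta$ be a regular cardinal exceeding the sizes of both forcings; since set forcing adds no ordinals, $M$, $N$, and $V$ share the same ordinals, and because each forcing is $\theta$-c.c., all three models compute cardinal successors and cofinalities identically above $\theta$. Suppose toward contradiction that $j:M\to N$ is a nontrivial elementary embedding, a class of $V$. Since $M\satisfies\ZFC$, lemma \ref{Lemma.CriticalPoint} provides a critical point $\kappa$, a regular uncountable cardinal of $M$, and we arrange $\theta>\kappa$.

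First I would locate a fixed point $\lambda>\theta$ with $j(\lambda)=\lambda$, using the longer-sequence device of theorem \ref{Theorem.Noj:VtoV[G]}, since the naive $\omega$-sequence approaching $\lambda$ need not belong to $M$. Starting above $\theta$, define a continuous increasing sequence by $\delta_{\alpha+1}=j(\delta_\alpha+1)$, taking suprema at limits, so that $j\image\delta_\xi\of\delta_\xi$ at every limit $\xi$; set $C=\set{\delta_\xi\st\xi<\theta}$, a club in $\delta_\theta$ of cofinality $\theta$. Because the forcing from $M$ to $V$ is $\theta$-c.c., there is a club $D\of C$ with $D\in M$. Letting $\lambda$ be the supremum of the first $\omega$ elements of $D$, I obtain $\cof(\lambda)=\omega$ witnessed by an $\omega$-sequence in $M$, together with $j\image\lambda\of\lambda$; applying $j$ to that sequence then yields $j(\lambda)=\lambda$.

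Next, paralleling theorem \ref{Theorem.Noj:VtoV[G]}, I work with $\lambda^\plus$ and inside $\lambda^\plusplus$, both fixed by $j$ and computed identically in $M$, $N$, and $V$. In $M$ I apply the Ulam-Solovay theorem to partition $\Cof_{\lambda^\plus}\lambda^\plusplus=\bigsqcup_{\alpha<\kappa}S_\alpha$ into $\kappa$ many stationary pieces, set $\vec S=\<S_\alpha\st\alpha<\kappa>$, and consider $S^*=j(\vec S)(\kappa)$, which by elementarity is a stationary subset of $(\Cof_{\lambda^\plus}\lambda^\plusplus)^N$ disjoint from each $j(S_\alpha)$. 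The collection $C=\set{\beta<\lambda^\plusplus\st j\image\beta\of\beta}$ is a club of $V$; since $V$ is a $\theta$-c.c. set-forcing extension of $N$ and $\lambda^\plusplus$ has cofinality above $\theta$, there is a club $D\of C$ with $D\in N$, so I may choose $\beta\in S^*\intersect D$. Then $\cof(\beta)=\lambda^\plus$ in $N$, hence in $M$ and $V$ by absoluteness, and the order-type argument for cofinal sequences of length $\lambda^\plus$, using $j\image\beta\of\beta$, gives $j(\beta)=\beta$. Finally $\beta\in(\Cof_{\lambda^\plus}\lambda^\plusplus)^M$ places $\beta$ in some $S_\alpha$, whence $\beta=j(\beta)\in j(S_\alpha)$, contradicting $\beta\in S^*$ and the disjointness of the pieces of $j(\vec S)$.

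The main obstacle is coordinating cofinality absoluteness across all three models simultaneously: the partition is built in $M$, the stationary target $S^*$ and the reflecting club live in $N$, yet the fixed point $\beta$ must be recognized as having cofinality $\lambda^\plus$ in both grounds at once. Choosing the single threshold $\theta$ above the sizes of both forcings is what makes cardinal successors and cofinalities above $\theta$ agree in $M$, $N$, and $V$, so that $\lambda^\plus$ and $\lambda^\plusplus$ behave uniformly and the two club-reflection steps, one into $M$ to build $\lambda$ and one into $N$ to find $\beta$, both go through. Once this absoluteness is secured, the contradiction is obtained exactly as in the earlier theorems.
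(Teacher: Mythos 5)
Your proof is correct, but it takes a different route from the paper's. The paper disposes of this theorem in one line, as an immediate corollary of the later theorem~\ref{Theorem.NojMtoNifStationaryCorrect} on eventually stationary-correct models, using the observation that a set-forcing ground model is stationary correct in its extension at regular cardinals above the size of the forcing. You instead give a direct, self-contained argument that splices together the two techniques of theorems~\ref{Theorem.Noj:V[G]toV} and~\ref{Theorem.Noj:VtoV[G]}: the long continuous sequence plus $\theta$-c.c.\ club-reflection into $M$ to manufacture the fixed point $\lambda$ with an $\omega$-sequence witnessed inside $M$, and then a second $\theta$-c.c.\ club-reflection into $N$ to meet $S^*$ with the closure points of $j$. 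This is sound: the two chain-condition reflections you perform are precisely the verification, in this special case, of the stationary-correctness hypothesis that the paper's general theorem abstracts out (your pulling of the club $D\subseteq C$ into $N$ and intersecting with the $N$-stationary set $S^*$ is the same fact as ``$S^*$ remains stationary in $V$''), and your bookkeeping of cardinal successors and cofinalities above the single threshold $\theta$ is handled correctly. What the paper's route buys is generality --- the stationary-correctness formulation also covers suitable class-forcing grounds and other eventually stationary-correct pairs --- while your route buys a concrete proof that stays entirely within the forcing setting and does not require first developing the notion of stationary correctness.
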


In other words, if $M$ and $N$ have a common set-forcing
extension $M[G]=N[H]$, then in no such extension is there a
nontrivial elementary embedding $j:M\to N$. This theorem is
an immediate corollary of theorem
\ref{Theorem.NojMtoNifStationaryCorrect} in the next
section, since any ground model is stationary correct in
its forcing extension at regular cardinals above the size
of the forcing. An important special case of theorem
\ref{Theorem.NojBetweenGrounds}, obtained simply by
applying it in a forcing extension $V[G]$, is the generic
embedding version of the Kunen inconsistency.

\begin{corollary} \label{Corollary.NoGenericjVtoV}
In no set-forcing extension $V[G]$ is there a nontrivial
elementary embedding $j:V\to V$.
\end{corollary}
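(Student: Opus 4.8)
The plan is to derive this corollary as the special case of theorem \ref{Theorem.NojBetweenGrounds} in which one takes both ground models to be $V$ itself, but carried out from the vantage point of the forcing extension. The key observation is that whenever $V[G]$ is a set-forcing extension of $V$, obtained by forcing with some poset $\P\in V$ to add the $V$-generic filter $G\of\P$, the original universe $V$ is itself a set-forcing ground model of $V[G]$, immediately by the definition of ground model recorded in section \ref{Section.Preliminaries}, since the very equation $V[G]=V[G]$ exhibits $V[G]$ as a set-forcing extension of $V$. Thus $M=V$ and $N=V$ are both set-forcing ground models of $V[G]$, with common extension $V[G]=V[G]$, which is exactly the configuration addressed by theorem \ref{Theorem.NojBetweenGrounds}.

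First I would shift the ambient universe to $V[G]$. This is legitimate because, as noted in section \ref{Section.Preliminaries}, the forcing method goes through in the \NGBC\ context and \NGBC\ is preserved, so $V[G]$ is again a model of the background second-order theory, and the single \NGBC\ assertion expressing theorem \ref{Theorem.NojBetweenGrounds} therefore holds of it. The original universe $V$ remains available as a (definable) class of $V[G]$ by ground-model definability, so it is a bona fide candidate for the roles of $M$ and $N$. I would then apply theorem \ref{Theorem.NojBetweenGrounds} inside $V[G]$ with $M=N=V$ to conclude that there is no nontrivial elementary embedding $j:V\to V$ that is a class of $V[G]$, which is precisely the assertion of the corollary.

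There is essentially no mathematical obstacle here: all of the combinatorial content lives upstream in theorem \ref{Theorem.NojMtoNifStationaryCorrect} and its consequence theorem \ref{Theorem.NojBetweenGrounds}, both of which I may assume. The only point requiring any care is the metamathematical one, namely checking that ``applying the theorem in $V[G]$'' is sound---that $V[G]$ genuinely satisfies the background theory, that $V$ persists as a class there, and that a purported $j:V\to V$ appearing as a class of $V[G]$ is literally an instance of the embeddings $j:M\to N$ ruled out by theorem \ref{Theorem.NojBetweenGrounds}. Finally I would remark that this recovers the original Kunen inconsistency, theorem \ref{Theorem.NojVtoV}, as the special case of trivial forcing, where $V[G]=V$.
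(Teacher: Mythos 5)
Your proposal is correct and is exactly the paper's argument: the corollary is obtained by applying theorem \ref{Theorem.NojBetweenGrounds} inside the extension $V[G]$ with $M=N=V$, both being set-forcing ground models of $V[G]$. The additional metamathematical care you take (that \NGBC\ is preserved by forcing and that $V$ remains a class of $V[G]$) is consistent with the remarks already made in section \ref{Section.Preliminaries}, so there is nothing to add.
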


Indeed, theorem \ref{Theorem.NojMtoNifStationaryCorrect}
will show that there is no such class $j$ in any extension
of $V$ that is eventually stationary correct (this includes
many class forcing extensions), and the remarks after that
theorem generalize it still further.

\section{Elementary embeddings between eventually
stationary-correct
models}\label{Section.StationaryCorrectNew}

We define that a transitive class model $M$ of set theory
is {\df stationary correct} at a limit ordinal $\delta$ of
$M$ if every subset of $\delta$ that is stationary in $M$
remains stationary in the universe $V$. More generally, one model $M$ is stationary correct to another larger model $N$ at $\delta$, if every stationary subset of $\delta$ in $M$ is stationary in $N$. For example, after any forcing
of size less than a regular cardinal $\delta$, and indeed,
after any $\delta$-c.c.~forcing, every stationary subset of
$\delta$ in the ground model remains stationary in the
extension, because every club subset of $\delta$ in the
forcing extension will contain a club of the ground model
as a subset. After any set forcing, therefore, the ground
model is stationary correct in the extension at all regular
cardinals above the size of the forcing. Since in general
regular cardinals of $M$ can become singular in $V$, let us
adopt the convention that a subset $S\of\delta$ is
stationary in the case $\cof(\delta)=\omega$ if and only if
$S$ contains a final segment $(\alpha,\delta)$ for some
$\alpha<\delta$. In particular, in the extreme case that a
cardinal $\delta$ has uncounable cofinality in $M$ and
countable cofinality in $V$, then $M$ is not stationary
correct at $\delta$.

Before proving the theorem, we make a few elementary
observations about stationary correctness. First, a
transitive class $M$ is stationary correct at $\delta$ if
and only if it is stationary correct at $\cof(\delta)^M$.
The countable cofinality case is immediate; when the
cofinality is uncountable in $M$, then in either case of
stationary-correctness, it follows that $\delta$ also has
uncountable cofinality in $V$. We may fix a club subset
$C\of\delta$ in $M$ with order type $\cof(\delta)^M$ and
observe that a set $S\of\delta$ is stationary if and only
if $S\intersect C$ is stationary. Viewing $S\intersect C$
as a subset of $C$, this latter claim is equivalent to the
stationarity of a subset of $\cof(\delta)^M$, as desired.
Second, we note that if $M\of V$ is stationary correct at a
regular cardinal $\delta$ of $M$, then $\delta$ remains
regular in $V$. If not, let $\eta=\cof(\delta)^V<\delta$ be
the new smaller cofinality, which is a regular cardinal in
$V$, and let $C\of\delta$ be a cofinal $\eta$-sequence in
$V$. The case $\eta=\omega$ is easily ruled out by our
convention about stationary subsets of ordinals with
cofinality $\omega$, and so we may assume $\eta$ is
uncountable. The set $C'$ of limit points of $C$,
therefore, is club and consists entirely of ordinals of
cofinality less than $\eta$ in $V$. Since $\eta$ is regular
in $V$, no such ordinal can have cofinality $\eta$ in $M$,
and so $C'$ is disjoint from the set $(\Cof_\eta\delta)^M$.
This set, which is stationary in $M$, is thus no longer
stationary in $V$, contradicting our hypothesis that $M$
was stationary correct. Finally, third, the two previous
observations together imply that if $M\of V$ is stationary
correct at an ordinal $\delta$, then
$\cof(\delta)^M=\cof(\delta)^V$.

\begin{theorem}\label{Theorem.NojMtoNifStationaryCorrect}
Suppose that $M$ and $N$ are transitive class models of
\ZFC\ and both are stationary-correct at all sufficiently
large regular cardinals of $M$ and $N$. Then in $V$ there
is no nontrivial elementary embedding $j:M\to N$.
\end{theorem}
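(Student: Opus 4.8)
The plan is to follow the template of the proof of theorem \ref{Theorem.Noj:VtoV[G]}, replacing each forcing-theoretic fact about cofinalities and clubs by an appeal to stationary correctness. Since $M$ and $N$ are transitive proper class models of \ZF, they contain all the ordinals and so have the same ordinals, and since $M\satisfies\AC$, lemma \ref{Lemma.CriticalPoint} supplies a critical point $\kappa=\cp(j)$, a regular uncountable cardinal of $M$. I would fix once and for all a threshold $\theta_0$ above which both $M$ and $N$ are stationary correct at their regular cardinals, and arrange that every ordinal appearing below lies above $\max(\kappa,\theta_0)$.

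Before the main construction I would isolate two lemmas. The first is a cofinality-absoluteness principle: if $\theta$ is a limit ordinal with $\cof(\theta)^N$ a sufficiently large regular cardinal, then $\cof(\theta)^M=\cof(\theta)^N=\cof(\theta)^V$, and symmetrically with $M$ and $N$ exchanged. This follows from the three observations recorded before the theorem: stationary correctness of $N$ at the regular cardinal $\cof(\theta)^N$ gives $\cof(\theta)^N=\cof(\theta)^V$, and then since $\cof(\theta)^M\geq\cof(\theta)^V$ is itself a sufficiently large regular cardinal of $M$, stationary correctness of $M$ forces $\cof(\theta)^M=\cof(\theta)^V$ as well. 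The second is a successor-agreement lemma: if $\lambda$ is a sufficiently large fixed point of $j$, then $(\lambda^\plus)^M=(\lambda^\plus)^N$, and this common value is again fixed by $j$. Indeed $j((\lambda^\plus)^M)=(\lambda^\plus)^N\geq(\lambda^\plus)^M$; were the inequality strict, $(\lambda^\plus)^M$ would have $N$-cardinality at most $\lambda$, hence $N$-cofinality at most $\lambda$, contradicting that $(\lambda^\plus)^M$, being a large regular cardinal of $M$, remains regular in $V$ and so has $V$-cofinality, and a fortiori $N$-cofinality, above $\lambda$.

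Next I would produce a large fixed point $\lambda$ of $j$ with $\cof(\lambda)^M=\omega$. Exactly as in theorem \ref{Theorem.Noj:VtoV[G]}, build in $V$ an increasing continuous sequence with $\delta_{\alpha+1}=j(\delta_\alpha+1)$, so that every limit point $\delta_\xi$ satisfies $j\image\delta_\xi\of\delta_\xi$; taking $\gamma$ to be a sufficiently large regular cardinal of $M$ and $C=\set{\delta_\xi\st\xi<\gamma}$, the ordinal $\delta_\gamma$ has $V$-cofinality $\gamma$, whence $\cof(\delta_\gamma)^M=\gamma$ by the absoluteness lemma. Where theorem \ref{Theorem.Noj:VtoV[G]} invoked the $\gamma$-c.c.\ to capture a ground-model club inside $C$, I would instead observe that $(\Cof_\omega\delta_\gamma)^M$ is stationary in $\delta_\gamma$ in $M$, hence stationary in $V$ by stationary correctness of $M$, and so meets the $V$-club of limit points of $C$. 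A point $\lambda$ of this intersection is a closure point of $j$ with $\cof(\lambda)^M=\omega$, so $j(\lambda)=\lambda$, being the supremum of an $\omega$-sequence of $M$ on which $j$ is bounded below $\lambda$. By the successor-agreement lemma, $\lambda^\plus$ and $\lambda^\plusplus$ are then common regular cardinals of $M$ and $N$ that are fixed by $j$.

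The endgame runs as in theorem \ref{Theorem.Noj:VtoV[G]}: partition $(\Cof_{\lambda^\plus}\lambda^\plusplus)^M$ in $M$ by the Ulam--Solovay theorem into $\kappa$ many stationary pieces $\vec S=\<S_\alpha\st\alpha<\kappa>$, and set $S^*=j(\vec S)(\kappa)$, a stationary subset of $(\Cof_{\lambda^\plus}\lambda^\plusplus)^N$ in $N$ that is disjoint from each $j(S_\alpha)=j(\vec S)(\alpha)$. The essential new use of the hypothesis is that $S^*$, stationary in $N$ at the large regular cardinal $\lambda^\plusplus$, is stationary in $V$, so it meets the $V$-club $C=\set{\beta<\lambda^\plusplus\st j\image\beta\of\beta}$ at some $\beta$; for such a $\beta$ we have $\cof(\beta)^N=\lambda^\plus$, so $\cof(\beta)^M=\lambda^\plus$ by absoluteness and hence $\beta\in S_\alpha$ for some $\alpha<\kappa$, while the regularity and fixedness of $\lambda^\plus$ give $j(\beta)=\beta$ as in theorem \ref{Theorem.Noj:VtoV[G]}. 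Then $\beta=j(\beta)\in j(S_\alpha)$ contradicts $\beta\in S^*$. I expect the fixed-point step and the cofinality absoluteness to be the main obstacle: because $j$ is only a class of the ambient universe, the sequences it generates need not lie in $M$, so neither $j(\lambda)=\lambda$ nor the cofinality of $\beta$ is available from elementarity alone, and the whole weight of the theorem lies in using stationary correctness of both models to push $M$-definable stationary sets up into $V$ and to pin down large regular cofinalities as absolute among $M$, $N$, and $V$.
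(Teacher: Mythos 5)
Your proposal is correct and follows essentially the same route as the paper's proof: obtain the critical point from lemma \ref{Lemma.CriticalPoint}, use the three observations about stationary correctness to get cofinality and successor-cardinal absoluteness among $M$, $N$, and $V$ above the threshold, locate a fixed point $\lambda$ of countable $M$-cofinality by intersecting an $M$-stationary set with a $V$-club of closure points of $j$ (the paper uses the global class club of closure points where you build the explicit continuous sequence, a cosmetic difference), and then run the Ulam--Solovay partition of $\Cof_{\lambda^\plus}\lambda^\plusplus$ in $M$, pushing the stationarity of $j(\vec S)(\kappa)$ from $N$ up to $V$ to meet the club of closure points and derive the contradiction. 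Your successor-agreement lemma is argued slightly differently (comparing $(\lambda^\plus)^M$ and $(\lambda^\plus)^N$ directly via elementarity rather than identifying both with $(\lambda^\plus)^V$), but the content is the same.
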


\begin{proof}Suppose that $j:M\to N$ is a nontrivial elementary
embedding and that $M$ and $N$ are transitive proper class
models of \ZFC, which are eventually stationary correct to
$V$. By lemma \ref{Lemma.CriticalPoint}, the embedding $j$
has a critical point $\kappa$. As in the previous proofs,
we shall begin by finding an ordinal $\lambda$ above the
critical point of $j$ with $j(\lambda)=\lambda$. To find
such an ordinal, suppose that $M$ and $N$ are stationary
correct at all regular cardinals of $M$ or $N$ above
$\theta$, and we may assume $\kappa<\theta$. The general
considerations before the theorem show that if a cardinal
should have cofinality above $\theta$ in any of the models
$M$, $N$ or $V$, then this cofinality is preserved to the
other models. Considering the operation of $j$ on the
ordinals, it is easy to see that $C=\set{\beta\in\ORD\st
j\image\beta\of\beta}$ is a closed unbounded proper class
of ordinals. Thus, there is some $\eta\in C$ with
$\theta<\cof(\eta)$. Since $(\Cof_\omega\eta)^M$ is
stationary in $M$, it remains stationary in $V$. Since
$C\intersect\eta$ is club in $\eta$, there is some
$\lambda\in C\intersect (\Cof_\omega\eta)^M$ with
$\theta\leq\lambda$. Thus, $j\image\lambda\of\lambda$ and
$\cof(\lambda)^M=\omega$. From this, it easily follows that
$j(\lambda)=\lambda$. Since cardinals above $\theta$ are
preserved, it follows that $\lambda^\plus$ and
$\lambda^\plusplus$ are the same in all three models, and
so $j(\lambda^\plus)=\lambda^\plus$ and
$j(\lambda^\plusplus)=\lambda^\plusplus$.

We continue the argument just as in theorem
\ref{Theorem.Noj:VtoV[G]}. Since cofinalities above
$\theta$ are computed correctly, the set
$\Cof_{\lambda^\plus}\lambda^\plusplus$ is absolute between
$M$, $N$ and $V$. By the Ulam-Solovay Theorem, there is a
partition
$\Cof_{\lambda^\plus}\lambda^\plusplus=\bigsqcup_{\alpha<\kappa}S_\alpha$
in $M$ into stationary sets $S_\alpha$. Let $\vec
S=\<S_\alpha\st\alpha<\kappa>$ and $S^*=j(\vec S)(\kappa)$.
By elementarity, $S^*$ is a stationary subset of
$\Cof_{\lambda^\plus}\lambda^\plusplus$ in $N$, and hence
also stationary in $V$. Since
$C\intersect\lambda^\plusplus$ is club in
$\lambda^\plusplus$, there is some $\beta\in C\intersect
S^*$. Thus, $j\image\beta\of\beta$ and
$\cof(\beta)=\lambda^\plus$ in $N$. Since this is above
$\theta$, it follows that $\cof(\beta)=\lambda^\plus$ also
in $V$ and $M$, and from this it follows as before that
$j(\beta)=\beta$. Since $\beta$ has cofinality
$\lambda^\plus$ in $M$, it follows that $\beta\in S_\alpha$
for some $\alpha<\kappa$, which implies that $\beta\in
j(S_\alpha)$ and in $S^*$, contradicting the fact that
these sets are disjoint.
\end{proof}

Because every ground model is stationary correct in its
set-forcing extensions above the size of the forcing, it
follows that theorem \ref{Theorem.NojBetweenGrounds} is a
special case of theorem
\ref{Theorem.NojMtoNifStationaryCorrect}, and therefore
theorem \ref{Theorem.NojMtoNifStationaryCorrect}
generalizes all the theorems of section
\ref{Section.jbetweenVandV[G]}. This theorem, however, also
allows us to rule out embeddings between certain class
forcing ground models, provided that they are eventually
stationary correct, and this includes many natural
class-forcing iterations.

It is clear that we may weaken the assumption that $M$ and
$N$ are eventually stationary correct in theorem
\ref{Theorem.NojMtoNifStationaryCorrect}, since once we
knew that the models agreed on $\lambda^\plus$ and
$\lambda^\plusplus$, then stationarity was used only to
ensure that the sets had a member in the particular club
class $C$, the class of closure points of $j$.

For example, for a class function $j$, we could define that
a class $M$ is {\df $j$-correct}, if it is eventually
correct about regular cardinals and there is a closed
unbounded class $D$ of ordinals such that if
$C=\set{\beta\in D\st j\image\beta\of\beta}$ is unbounded
in some $\gamma$, then every $S\of\gamma$ that $M$ thinks
is stationary contains an element of $C$. The concept of
stationary correctness in theorem
\ref{Theorem.NojMtoNifStationaryCorrect} and many of the
other theorems in this article could then be replaced with
the concept of $j$-correctness. For example, theorem
\ref{Theorem.NojMtoNifStationaryCorrect} would become the
claim that there is no nontrivial elementary embedding
$j:M\to N$, for which $M$ and $N$ are both $j$-correct.

\section{Embeddings from $V$ into $\HOD$ and
related models}\label{Section.intoHODetc}

Let us consider now the possibility of a nontrivial
elementary embedding $j:V\to\HOD$ of the universe into the
class of hereditarily ordinal-definable sets. After ruling
out such embeddings in theorem \ref{Theorem.Noj:VtoHOD}, we
shall generalize the result to several other related inner
models. We shall give a slightly modified version of
Woodin's original proof of this theorem, which will enable
us easily to extract additional information from it in the
various corollaries and theorems that we prove after it in
this section.

\begin{theorem}[Woodin]\label{Theorem.Noj:VtoHOD}
There is no nontrivial elementary embedding $j:V\to \HOD$.
\end{theorem}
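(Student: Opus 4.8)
The plan is to exploit the same stationary-partition machinery used in the previous theorems, but to leverage the crucial asymmetry of the present situation: the class $\HOD$ is definable in $V$ \emph{without parameters}. This definability is exactly what repairs the difficulty that prevents a naive application of the earlier arguments, and it also lets us invoke clause (3) of lemma \ref{Lemma.CriticalPoint} to guarantee a critical point $\kappa$, since here $M=\HOD\of V=N$ and $\HOD$ is definable without parameters. Note that we cannot appeal directly to theorem \ref{Theorem.NojMtoNifStationaryCorrect}, because there is no reason that $V$ and $\HOD$ should be stationary correct to each other; indeed $\HOD$ may be badly wrong about stationarity. So the argument must extract stationarity information in a different way.

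First I would obtain, exactly as in theorem \ref{Theorem.Noj:V[G]toV}, a fixed point $\lambda$ of $j$ above $\kappa$ with $\cof(\lambda)^V=\omega$, by starting at some $\delta_0$ of countable cofinality and, if necessary, closing off under $j$ along an $\omega$-sequence $\delta_{n+1}=j(\delta_n)$, so that $j(\lambda)=\lambda$. The key new ingredient is that $\HOD$ computes its own version of $\lambda^\plus$, which need not agree with that of $V$; so I would work with $(\lambda^\plus)^{\HOD}$, a regular cardinal of $\HOD$ that is fixed by $j$ by elementarity. The main point is that, since $\HOD$ is ordinal-definable, any partition of $\Cof_\omega(\lambda^\plus)^{\HOD}$ into stationary pieces that is \emph{definable from ordinal parameters} is recognized as such by $\HOD$. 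I would therefore partition $\Cof_\omega(\lambda^\plus)^{\HOD}=\bigsqcup_{\alpha<\kappa}S_\alpha$ in $\HOD$, set $\vec S=\<S_\alpha\st\alpha<\kappa>$ and $S^*=j(\vec S)(\kappa)$, which by elementarity is stationary in $\HOD$ and disjoint from every $j(S_\alpha)$.

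The heart of the argument — and the step I expect to be the main obstacle — is to produce a single ordinal $\beta$ that simultaneously lies in $S^*$, is fixed by $j$, and is caught in the partition, thereby forcing $\beta\in j(S_\alpha)$ for some $\alpha$ and contradicting disjointness from $S^*$. In the earlier theorems the relevant club $C=\set{\beta<\lambda^\plus\st j\image\beta\of\beta}$ lived in $V$, and one intersected it with the stationary set \emph{in $V$}; here the subtlety is that $S^*$ is stationary in $\HOD$ but perhaps not in $V$, so one cannot simply intersect with $C$ in $V$. The resolution is to carry out the intersection inside $\HOD$: the club of closure points of $j$ below $(\lambda^\plus)^{\HOD}$, or a suitable $\HOD$-club approximating it, meets $S^*$ in $\HOD$, yielding $\beta$ with $j\image\beta\of\beta$ and $\cof(\beta)^{\HOD}=\omega$, whence $j(\beta)=\beta$ by the usual $\omega$-cofinality argument. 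Since $\beta\in\Cof_\omega(\lambda^\plus)^{\HOD}$ it lands in some $S_\alpha$, so $\beta=j(\beta)\in j(S_\alpha)$, while also $\beta\in S^*$, the desired contradiction. The delicate bookkeeping is ensuring the club used to capture $\beta$ is genuinely available in $\HOD$ and genuinely meets $S^*$ there; this is where the ordinal-definability of $\HOD$, rather than stationary correctness, does the work.
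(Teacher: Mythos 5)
You have correctly located the crux---getting a club of closure points of $j$ into $\HOD$ so that it must meet $S^*$ there---but your proposal does not actually supply the mechanism for this, and the ordinal-definability of $\HOD$ alone cannot do it: the club $C=\set{\beta\st j\image\beta\of\beta}$ is defined from $j$, which is merely an \NGBC\ class and need not be definable at all, so nothing so far makes $C$ ordinal definable. The paper's proof (Woodin's) closes exactly this gap with an idea your sketch is missing: partition $\Cof_\omega\lambda^\plus$ into $\lambda^\plus$ many (not $\kappa$ many) stationary sets $\vec S$ in $V$, let $\vec T=j(\vec S)$, which lies in $\HOD$ automatically because it is a value of $j$, and prove the characterization that for $\xi<\lambda^\plus$, $\xi\in\ran(j)$ if and only if $T_\xi$ is stationary in $V$. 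This exhibits $j\image\lambda^\plus$ as definable in $V$ from the parameter $\vec T\in\HOD$, hence ordinal definable, hence an element of $\HOD$; from it one recovers $j\restrict\lambda^\plus$ and therefore $C\intersect\lambda^\plus$ inside $\HOD$, and only then does the standard contradiction with $S^*=T_\kappa$ go through. Without this two-stage structure (or some substitute argument placing $j\restrict\lambda^\plus$ in $\HOD$), your final paragraph is an assertion of the conclusion rather than a proof; note also that a $\kappa$-indexed partition could not support the characterization of $\ran(j)\intersect\lambda^\plus$ even if you wanted to add it later.

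Two secondary problems. First, you propose to apply Ulam--Solovay \emph{inside} $\HOD$; but then the statement ``each $S_\alpha$ is stationary in $\HOD$'' transfers under $j:V\to\HOD$ to ``each $T_\alpha$ is stationary in $\HOD^{\HOD}$,'' i.e., in $\HOD^2$, which is weaker than stationarity in $\HOD$ and degrades the final intersection argument. The paper instead partitions in $V$ into sets stationary in $V$, so that elementarity yields pieces stationary in $\HOD$, which is what the endgame needs. (Relatedly, your worry about $(\lambda^\plus)^{\HOD}$ versus $\lambda^\plus$ is moot: $j(\lambda^\plus)=(\lambda^\plus)^{\HOD}\leq(\lambda^\plus)^V\leq j(\lambda^\plus)$, so they coincide.) Second, your appeal to clause (3) of lemma \ref{Lemma.CriticalPoint} has the inclusion backwards: the domain of $j$ is $V$ and the codomain is $\HOD$, so the applicable clauses are (1) (since $V\satisfies\AC$ follows by elementarity from $\HOD\satisfies\AC$) or (2) (since $\HOD\of V$), not (3).
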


\begin{proof}
We note briefly that we need not specifically assume \AC\
in this argument, as it follows trivially that $V$
satisfies \AC\ from the assumption that $j:V\to\HOD$ is
elementary, since \AC\ holds in $\HOD$. By lemma
\ref{Lemma.CriticalPoint}, the embedding $j$ has a critical
point $\kappa$, which must be a measurable cardinal in $V$.
Define the usual critical sequence by $\kappa_0=\kappa$ and
$\kappa_{n+1}=j(\kappa_n)$, and let
$\lambda=\sup_{n<\omega}\kappa_n$. As in the previous
proofs, it follows that $j(\lambda) = \lambda$ and also
that $j(\lambda^\plus)=\lambda^\plus$. By the Ulam-Solovay
Theorem, we may partition $\Cof_\omega\lambda^\plus$ into
$\lambda^\plus$ many disjoint stationary sets $\vec
S=\<S_\alpha\st\alpha<\lambda^\plus>$, with
$\Cof_\omega\lambda^\plus=\bigsqcup_{\alpha<\lambda^\plus}S_\alpha$.
Let $\vec T=j(\vec S)=\<T_\alpha\st\alpha<\lambda^\plus>$.

We claim for $\xi<\lambda^\plus$ that $\xi\in\ran(j)$ if
and only if $T_\xi$ is stationary in $V$. For the forward
direction, suppose that $\xi=j(\alpha)$. Let
$C=\set{\beta<\lambda^\plus\st j\image\beta\of\beta}$,
which is a club subset of $\lambda^\plus$ in $V$. Observe
that if $\beta\in C$ and $\cof(\beta)=\omega$, then
$j(\beta)=\beta$. Thus, if $\beta\in
C\intersect\Cof_\omega\lambda^\plus$, then $\beta\in
S_\alpha\iff\beta=j(\beta)\in j(S_\alpha)=T_{j(\alpha)}$.
Since $S_\alpha$ and $T_{j(\alpha)}$ are contained within
the ordinals of cofinality $\omega$, this means that
$C\intersect S_\alpha=C\intersect T_{j(\alpha)}$. In short,
$S_\alpha$ and $T_{j(\alpha)}$ agree on a club, and so
$T_{j(\alpha)}$ is stationary, as desired. Conversely,
suppose that $T_\xi$ is stationary in $V$. It follows that
there is some $\beta\in C\intersect T_\xi$. Since every
element of $T_\xi$ has cofinality $\omega$, it follows that
$j(\beta)=\beta$. But since
$\beta\in\Cof_\omega\lambda^\plus$, we must also have
$\beta\in S_\alpha$ for some $\alpha<\lambda^\plus$. It
follows that $\beta=j(\beta)\in j(S_\alpha)=T_{j(\alpha)}$.
So $T_\xi$ and $T_{j(\alpha)}$ have the element $\beta$ in
common, and since $\vec T$ is a partition, it must be that
$\xi=j(\alpha)$.

The claim of the previous paragraph shows that
$j\image\lambda^\plus$ is definable from $\vec T$, which is
in $\HOD$, and so $j\image\lambda^\plus\in\HOD$ as well.
From $j\image\lambda^\plus$, we can also define
$j\restrict\lambda^\plus$, and so $C\in\HOD$. To complete
the argument, let $S^*=T_\kappa$, which is a subset of
$\Cof_\omega\lambda^\plus$ that is stationary in \HOD.
Since $C$ is a club in \HOD, there is $\beta\in C\intersect
S^*$. Since $\cof(\beta)=\omega$ and $\beta\in C$, it
follows that $j(\beta)=\beta$. Since
$\beta\in\Cof_\omega\lambda^\plus$, it follows that
$\beta\in S_\alpha$ for some $\alpha<\lambda^\plus$. Thus,
$\beta=j(\beta)\in j(S_\alpha)=T_{j(\alpha)}$,
contradicting the fact that $T_\kappa$ and $T_{j(\alpha)}$
are disjoint.\end{proof}

Note that once we established $j \image \lambda^\plus\in
\HOD$, we could have finished the proof instead by using
Kunen's original method with $\omega$-J\'{o}nsson functions
or by using the technique of Harada \cite[p.
320-321]{Kanamori2004:TheHigherInfinite2ed} or of Zapletal
\cite{Zapletal1996:ANewProofOfKunenInconsistency}.

For any
class $A$, we define $\HOD(A)$ to be the class of sets that
are definable using ordinal parameters and parameters in
$A$. This differs somewhat from the more generous class
defined in \cite[p.195]{Jech:SetTheory3rdEdition}, where it
is also allowed that the class $A$ appears in the
definitions as a predicate, but as we do not need this
feature, our results are stronger with the more restrictive
notion.

\begin{theorem}\label{Theorem.Ifj:VtoMThenV=HOD(M)}
If $j:V\to M$ is an elementary embedding of $V$ into an
inner model $M$, then $V=\HOD(M)$. That is, every object in
$V$ is definable in $V$ using a parameter from $M$. Furthermore, every object in $V$ is definable using a parameter from the image of $j$.
\end{theorem}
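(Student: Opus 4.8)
The plan is to reduce the whole statement to a single uniform fact: for every ordinal $\gamma$ there is a parameter $p\in\ran(j)$ from which $j\restrict\gamma$ is definable in $V$. Granting this, an arbitrary set $a$ is recovered as follows. Working in \NGBC, so that $V\satisfies\AC$, I first fix a set of ordinals $A\of\gamma$ coding $a$, in the sense that $a$ is definable in $V$ from $A$ by the canonical Mostowski decoding of a well-ordering of $\tcl(\set a)$. Since $j$ is injective and strictly increasing on ordinals, $\alpha\in A\iff j(\alpha)\in j(A)$, and hence
$$A=\set{\alpha<\gamma\st (j\restrict\gamma)(\alpha)\in j(A)}.$$
Both $j(A)$ and the parameter $p$ defining $j\restrict\gamma$ lie in $\ran(j)$, and $\gamma$ is an ordinal, so $A$, and therefore $a$, is definable in $V$ from parameters drawn from $\ran(j)$. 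As $\ran(j)\of M$, this gives $V\of\HOD(M)$ and the sharper $V\of\HOD(\ran j)$ at once; the reverse inclusions are trivial, so $V=\HOD(M)=\HOD(\ran j)$, which is exactly the theorem together with its ``furthermore.''

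To establish the key fact I would reuse the central computation from the proof of theorem \ref{Theorem.Noj:VtoHOD}, applied at cofinally many fixed points of $j$ rather than at one. For any cardinal $\lambda$ with $j(\lambda)=\lambda$ I first check that $j(\lambda^\plus)=\lambda^\plus$: since $M$ is an inner model, $(\lambda^\plus)^M\leq\lambda^\plus$; meanwhile, for $\beta<\lambda^\plus$ we have $|j(\beta)|^M\leq j(\lambda)=\lambda$, so $j\image\lambda^\plus\of\lambda^\plus$, whence $\sup j\image\lambda^\plus=\lambda^\plus\leq j(\lambda^\plus)=(\lambda^\plus)^M\leq\lambda^\plus$, forcing equality. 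Now partition $\Cof_\omega\lambda^\plus$ into $\lambda^\plus$ many stationary pieces $\vec S$ and set $\vec T=j(\vec S)$. The argument of theorem \ref{Theorem.Noj:VtoHOD} then shows, verbatim and using only stationarity in $V$, the elementarity of $j$, and $j(\lambda^\plus)=\lambda^\plus$ (none of which was particular to $\HOD$), that for $\xi<\lambda^\plus$ one has $\xi\in\ran(j)$ if and only if $T_\xi$ is stationary in $V$. Thus $j\image\lambda^\plus$ is definable in $V$ from the parameter $\vec T\in\ran(j)$, and since $j\restrict\lambda^\plus$ is simply the order isomorphism of $\lambda^\plus$ onto $j\image\lambda^\plus$, it too is definable from $\vec T$. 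Because the fixed-point cardinals of $j$ are unbounded (close any ordinal under $j$ along an $\omega$-sequence), I may take $\lambda^\plus>\gamma$ and restrict, obtaining the promised parameter $p$ for $j\restrict\gamma$.

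The main obstacle is verifying that the recovery claim of theorem \ref{Theorem.Noj:VtoHOD} genuinely survives the replacement of $\HOD$ by an arbitrary inner model $M$, and in particular that $j(\lambda^\plus)=\lambda^\plus$ holds at the cofinally many fixed points I need; this is where the inner-model inequality $(\lambda^\plus)^M\leq\lambda^\plus$ does the work and must be handled with care. The genuinely new ingredients beyond that theorem are the arrangement of unboundedly many such $\lambda$ and the passage from the definability of each $j\restrict\gamma$ to the definability of arbitrary sets, effected by the set-of-ordinals coding; that last reduction is routine once \AC\ is in hand.
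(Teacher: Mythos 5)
Your proposal is correct and follows essentially the same route as the paper: find unboundedly many fixed points $\lambda$ with $j(\lambda^\plus)=\lambda^\plus$, use the stationary-partition characterization $\xi\in\ran(j)\iff T_\xi$ is stationary to define $j\restrict\lambda^\plus$ from $\vec T\in\ran(j)$, and then recover arbitrary sets from sets of ordinals via $j(A)$ and $j\image\lambda^\plus$. Your explicit verification that $j(\lambda^\plus)=\lambda^\plus$ via $(\lambda^\plus)^M\le\lambda^\plus$ is a detail the paper leaves implicit, but the argument is the same.
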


\begin{proof}
This theorem can be viewed as a corollary to the proof of
theorem \ref{Theorem.Noj:VtoHOD} (although one could
reverse this and view theorem \ref{Theorem.Noj:VtoHOD} as a
corollary to this result, arguing that if $j:V\to\HOD$,
then $V=\HOD(\HOD)$, which means $V=\HOD$, which prevents
such a $j$). Suppose $j:V\to M$ is elementary. We shall
first prove that for every ordinal $\gamma$, the
restriction $j\restrict \gamma$ is definable in $V$ from
parameters in $M$. As in the proof of theorem
\ref{Theorem.Noj:VtoHOD}, we may find $\lambda>\gamma$ with
$j(\lambda)=\lambda$ and $\cof(\lambda)=\omega$, by
iterating the embedding $\omega$ many times above $\gamma$.
It follows that $j(\lambda^\plus)=\lambda^\plus$. And as in
that proof, we may again partition
$\Cof_\omega\lambda^\plus=\bigsqcup_{\alpha<\lambda^\plus}S_\alpha$
into stationary sets $\vec
S=\<S_\alpha\st\alpha<\lambda^\plus>$, and, setting $\vec{T} = j(\vec{S}) = \<T_\alpha \st \alpha<\lambda^\plus>$, observe for
$\xi<\lambda^\plus$ that $\xi\in\ran(j)$ if and only if
$T_\xi$ is stationary in $V$. Thus, $j\image\lambda^\plus$
and hence $j\restrict\lambda^\plus$ is definable in $V$
from $\vec T$, which is an element of $M$. In particular,
$j\restrict\gamma$ is definable from $\vec T$ and $\gamma$. This proof also shows that for an unbounded class of successor cardinals $\lambda^+$, $j \restrict \lambda^\plus$ is definable from parameters in the image of $j$, since $\vec{T}$ is in the range of $j$.

Since any set $A\of\lambda^\plus$ is easily definable from $j(A)$
and $j\image\lambda^+$, it follows that every set of ordinals
in $V$ is definable in $V$ from parameters in the range of $j$. Since
every set is coded by a set of ordinals, it follows that
every set in $V$ is definable in $V$ from parameters in the range of $j$, and in particular, $V=\HOD(M)$. The proof is thus complete.
\end{proof}

Theorem \ref{Theorem.Ifj:VtoMThenV=HOD(M)} shows in
particular for any set $A$ that $j\restrict A$ is definable
in $V$ using parameters from $M$.

\begin{corollary}\label{Corollary.Ifj:VtoMofHODthenV=HOD}
If $j:V\to M$ is an elementary embedding of $V$ into a
transitive class $M$ for which $M\of\HOD$, then $V=\HOD$.
\end{corollary}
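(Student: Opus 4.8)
The plan is to derive this as a direct corollary of Theorem \ref{Theorem.Ifj:VtoMThenV=HOD(M)}. That theorem tells us that whenever $j:V\to M$ is elementary into an inner model, we have $V=\HOD(M)$, meaning every set in $V$ is definable using ordinal parameters together with parameters drawn from $M$. The key observation is that when $M\of\HOD$, every parameter from $M$ is itself already ordinal-definable, so the extra freedom of allowing $M$-parameters buys nothing beyond ordinal-definability.

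First I would invoke Theorem \ref{Theorem.Ifj:VtoMThenV=HOD(M)} to conclude $V=\HOD(M)$. Then I would fix an arbitrary set $x\in V$ and use that equation to write $x$ as definable in $V$ from ordinal parameters $\vec\alpha$ and some parameter $a\in M$. Since $M\of\HOD$, the parameter $a$ is hereditarily ordinal-definable, so $a$ itself is definable in $V$ from some ordinal parameters $\vec\beta$, say as the unique set satisfying a formula $\varphi(\cdot,\vec\beta)$. Substituting this definition of $a$ into the definition of $x$, I obtain a definition of $x$ in $V$ using only the ordinal parameters $\vec\alpha$ and $\vec\beta$. Hence $x\in\HOD$. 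As $x$ was arbitrary, $V\of\HOD$, and since $\HOD\of V$ always holds, we get $V=\HOD$.

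I do not expect a serious obstacle here, since this is a substitution argument of the most routine kind; the entire content has been pushed into Theorem \ref{Theorem.Ifj:VtoMThenV=HOD(M)}. The one point worth stating carefully is why an $M$-parameter can be eliminated: it is precisely the hypothesis $M\of\HOD$ that guarantees each such parameter is ordinal-definable in $V$, and composing an ordinal-definition-with-$M$-parameters with ordinal-definitions of those parameters yields a pure ordinal-definition. One should note that this composition is legitimate at the level of the defining formulas, so that the collapse of $\HOD(M)$ to $\HOD$ is genuinely witnessed by first-order formulas with ordinal parameters, as required for membership in $\HOD$.
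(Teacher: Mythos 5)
Your proposal is correct and follows essentially the same route as the paper: invoke Theorem \ref{Theorem.Ifj:VtoMThenV=HOD(M)} to get $V=\HOD(M)$ and then observe that $\HOD(M)=\HOD$ when $M\of\HOD$, since $M$-parameters are themselves ordinal-definable and can be eliminated by substitution. The only thing the paper adds is the side remark that \AC\ need not be assumed here, as the hypothesis already yields a global well-ordering of $V$ by pulling back the \HOD\ order of $M$ along $j$.
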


\begin{proof}
Note that we need not assume \AC\ for this result, since
our hypothesis actually implies it: defining that $r$
precedes $s$ if and only if $j(r)$ precedes $j(s)$ in the
\HOD\ order pulls back the definable \HOD\ order from $M$
to a global well-ordering of $V$. The claim is now
immediate from theorem \ref{Theorem.Ifj:VtoMThenV=HOD(M)},
since if $M\of\HOD$, then $\HOD(M)=\HOD$, since having
parameters from $M$ doesn't help beyond having ordinal
parameters.
\end{proof}

In other words, corollary
\ref{Corollary.Ifj:VtoMofHODthenV=HOD} asserts that if
$V\neq\HOD$ and $M\of\HOD$, then there is no nontrivial
$j:V\to M$.

Using the ideas of theorem \ref{Theorem.Noj:VtoV[G]}, we
shall now improve theorem
\ref{Theorem.Ifj:VtoMThenV=HOD(M)} to the case of
embeddings $j:M\to N$, not necessarily defined on all of
$V$, provided that $M$ is eventually stationary correct.
This will allow us afterward to generalize the previous
corollaries to the case of the generic $\HOD$.

\begin{theorem}\label{theorem.Ifj:MtoNThenMofHOD(N)}
Suppose that $j:M\to N$ is a nontrivial elementary
embedding between inner models $M$ and $N$ of \ZFC\ and
that $M$ is eventually stationary correct to $V$. Then
$M\of\HOD(N)$ and furthermore $j\restrict A\in\HOD(N)$ for
any $A\in M$. Indeed, for every $A \in M$, $A$ and $j\restrict A$ are definable in V from parameters in the range of $j$.
\end{theorem}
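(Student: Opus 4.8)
The plan is to run the proof of theorem~\ref{Theorem.Ifj:VtoMThenV=HOD(M)} from the vantage point of the source model $M$ rather than $V$, using the large-cofinality device of theorem~\ref{Theorem.Noj:VtoV[G]} to absorb the fact that the domain is no longer the whole universe. By lemma~\ref{Lemma.CriticalPoint} the embedding has a critical point $\kappa$; fix a threshold $\theta>\kappa$ beyond which $M$ is stationary correct to $V$. Iterating $j$ on the ordinals as before produces, above any prescribed $\gamma$, an ordinal $\lambda>\theta$ with $j(\lambda)=\lambda$. Since $(\lambda^\plus)^M$ and $(\lambda^\plusplus)^M$ are regular cardinals of $M$ above $\theta$, the correctness of $M$ makes them regular, hence cardinals, in $V$, and therefore cardinals in the inner model $N\of V$; combined with $\alpha\leq j(\alpha)$ and $j((\lambda^\plus)^M)=(\lambda^\plus)^N$ this forces $\lambda^\plus$ and $\lambda^\plusplus$ to be computed identically in $M$, $N$ and $V$ and to be fixed by $j$. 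I then partition $(\Cof_{\lambda^\plus}\lambda^\plusplus)^M$ in $M$ into $\lambda^\plusplus$ many stationary pieces $\vec S=\<S_\alpha\st\alpha<\lambda^\plusplus>$, and set $\vec T=j(\vec S)=\<T_\eta\st\eta<\lambda^\plusplus>$, which is a member of $N$ lying in the range of $j$.

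The single most important observation is that the set $\Cof_{\lambda^\plus}\lambda^\plusplus$ is \emph{absolute between $M$ and $V$}: for $\beta<\lambda^\plusplus$ one has $\cof^M(\beta)=\lambda^\plus$ if and only if $\cof^V(\beta)=\lambda^\plus$, since $\cof^V(\beta)\leq\cof^M(\beta)\leq|\beta|^M\leq\lambda^\plus$ pins the cofinality from above in both models, while the stationary correctness of $M$ at $\lambda^\plus$ yields $\cof^M(\beta)=\cof^V(\beta)$ whenever this cofinality is $\lambda^\plus$. (This is exactly the reason to use the large cofinality $\lambda^\plus$ rather than $\omega$, and it is where the one-sided hypothesis on $M$ does its work.) Granting this, I would recover $j\restrict\lambda^\plusplus$ from the parameter $\vec T$ by the characterization, for $\eta<\lambda^\plusplus$, that
\[
\eta\in\ran(j)\quad\text{if and only if}\quad T_\eta\intersect\Cof_{\lambda^\plus}\lambda^\plusplus\ \text{is stationary in }V,
\]
a condition expressed in $V$ using only the ordinal $\lambda^\plus$ and the range parameter $\vec T$. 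As usual $C=\set{\beta<\lambda^\plusplus\st j\image\beta\of\beta}$ is club in $V$, and any $\beta\in C$ with $\cof^M(\beta)=\lambda^\plus$ is fixed by $j$, because a cofinal $\lambda^\plus$-sequence of $\beta$ in $M$ is carried to a cofinal sequence of $j(\beta)$ of order type $\lambda^\plus$ whose $j$-image is cofinal in $j(\beta)$ yet bounded by $\beta$. For the forward direction, $S_\alpha\of\Cof_{\lambda^\plus}\lambda^\plusplus$ is stationary in $V$ by correctness and agrees with $T_{j(\alpha)}=j(S_\alpha)$ on $C$, so $T_{j(\alpha)}\intersect\Cof_{\lambda^\plus}\lambda^\plusplus$ is stationary. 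For the converse, a point $\beta\in C\intersect T_\eta\intersect\Cof_{\lambda^\plus}\lambda^\plusplus$ has $\cof^M(\beta)=\lambda^\plus$ by the absoluteness above, hence is fixed by $j$ and lies in some $T_{j(\alpha)}$, forcing $\eta=j(\alpha)$ since $\vec T$ is a partition. From $j\image\lambda^\plusplus$ one reads off its increasing enumeration $j\restrict\lambda^\plusplus$.

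The step I expect to be the main obstacle is precisely this converse, and more pointedly the clash between source and target cofinality. The naive analogue of theorem~\ref{Theorem.Ifj:VtoMThenV=HOD(M)}, testing whether $T_\eta$ alone is stationary in $V$, fails here: a closure point $\beta$ found inside $T_\eta\of(\Cof_{\lambda^\plus}\lambda^\plusplus)^N$ tells us only that $\cof^N(\beta)=\lambda^\plus$, whereas the continuity of $j$ needed to conclude $j(\beta)=\beta$ is governed by $\cof^M(\beta)$; because $N$ is \emph{not} assumed stationary correct, $N$ may overestimate cofinalities on a $V$-stationary set, and a non-range piece $T_\eta$ may well survive as stationary in $V$. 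Intersecting instead with the $M$-and-$V$-absolute set $\Cof_{\lambda^\plus}\lambda^\plusplus$ discards exactly this surplus: it confines attention to points whose cofinality really is $\lambda^\plus$ in $M$, where $j$ is continuous and $M$'s correctness is in force. This is the essential adaptation of the method of theorem~\ref{Theorem.Noj:VtoV[G]} to a source model that is only eventually stationary correct.

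Finally I would read off the conclusions. For a set of ordinals $A\in M$ with $A\of\lambda^\plusplus$, we have $A=\set{\alpha<\lambda^\plusplus\st j(\alpha)\in j(A)}$, so $A$ is definable in $V$ from $j\restrict\lambda^\plusplus$ and the parameter $j(A)$; since both $\vec T$ and $j(A)$ lie in the range of $j$, this exhibits $A$---and likewise $j\restrict A=\set{(\alpha,j(\alpha))\st\alpha\in A}$---as definable in $V$ from parameters in the range of $j$, whence $A,\,j\restrict A\in\HOD(N)$. A general $A\in M$ reduces to this case by coding: fixing in $M$ a well-founded relation on an ordinal collapsing to $(\tcl(\set A),{\in})$, and using that $j$ commutes with the absolute coding and collapse so that $j$ of the code collapses to $(\tcl(\set{j(A)}),{\in})$, one recovers $A$ and $j\restrict A$ from the code together with $j\restrict\lambda^\plusplus$ for a suitably large $\lambda$. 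This yields $M\of\HOD(N)$ along with the stated definability of each $A$ and $j\restrict A$ from parameters in the range of $j$.
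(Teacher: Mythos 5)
Your proof is correct and follows essentially the same route as the paper's: iterate $j$ through the stationary-correct region to obtain a fixed point $\lambda$ of $M$-cofinality $\omega$, partition $\Cof_{\lambda^+}\lambda^{++}$ into $\lambda^{++}$ many $M$-stationary pieces $\vec S$, and recover $j\image\lambda^{++}$ (hence $j\restrict\lambda^{++}$, hence every $A$ and $j\restrict A$ via coding by sets of ordinals) from the range parameter $\vec T=j(\vec S)$ by a stationarity test in $V$. Your one refinement---testing whether $T_\eta\cap\Cof_{\lambda^+}\lambda^{++}$ (computed in $V$, equivalently in $M$) is stationary, rather than $T_\eta$ alone---is a careful repair of a point the paper glosses: the paper asserts that $M$, $N$ and $V$ all agree on $\Cof_{\lambda^+}\lambda^{++}$, but since $N$ is assumed neither stationary correct nor closed it may overestimate cofinalities, so only the inclusion $(\Cof_{\lambda^+}\lambda^{++})^V\of(\Cof_{\lambda^+}\lambda^{++})^N$ is guaranteed, and your intersected criterion is the one whose converse direction provably characterizes $\ran(j)$.
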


\begin{proof}
Suppose $j:M\to N$ is elementary, where $M$ and $N$ are
inner models of \ZFC\ and $M$ is stationary correct to $V$
above $\theta$. Fix any ordinal $\gamma$. As in the proof
of theorem \ref{Theorem.NojMtoNifStationaryCorrect}, we may
find an ordinal $\lambda$ above both $\theta$ and $\gamma$
and of cofinality $\omega$ in $M$ such that
$j(\lambda)=\lambda$ and $\lambda>\theta$. Since $M$ is
stationary correct above $\theta$, it follows that
$\lambda^\plus$ and $\lambda^\plusplus$ are the same in $M$
and $V$, and from this it follows that
$j(\lambda^\plus)=\lambda^\plus$ and
$j(\lambda^\plusplus)=\lambda^\plusplus$, and so all three
models $M$, $N$ and $V$ agree on $\lambda^\plus$,
$\lambda^\plusplus$ and the set
$\Cof_{\lambda^\plus}\lambda^\plusplus$. By the Ulam-Solovay
theorem, we may partition
$\Cof_{\lambda^\plus}\lambda^\plusplus=\bigsqcup_{\alpha<\gamma}S_\alpha$
into stationary sets $\vec S=\<S_\alpha\st\alpha<\lambda^\plusplus>$
in $M$. Let $\vec{T} = \<T_\alpha \st \alpha < \lambda^\plusplus> = j(\vec{S}).$

We claim as in theorem \ref{Theorem.Ifj:VtoMThenV=HOD(M)}
for $\xi<\lambda^\plusplus$ that $\xi\in\ran(j)$ if and only if
$T_\xi$ is stationary in $V$. If $\xi=j(\alpha)$, then as
in the proof of theorem \ref{Theorem.Noj:VtoHOD}, we know
that $S_\alpha$ and $T_\xi$ agree on the club
$C=\set{\beta<\lambda^\plusplus \st j\image\beta\of\beta}$,
since any such $\beta$ of cofinality $\lambda^\plus$ is
fixed by $j$, and so $T_\xi$ is stationary in $V$.
Conversely, if $T_\xi$ is stationary in $V$, then there is
some $\beta\in T_\xi\intersect C$, and so
$\cof(\beta)=\lambda^\plus$ in $N$ and hence $V$ and $M$,
and so $\beta\in S_\alpha$ for some ordinal $\alpha < \lambda^\plusplus $. It follows that $\beta=j(\beta)\in
T_{j(\alpha)}$, which implies $\xi=j(\alpha)$ since the
sets in $\vec T$ are disjoint.

Thus, $j\image\lambda^\plusplus$ and hence $j\restrict\lambda^\plusplus$ is
definable in $V$ from $\vec T$, which is an element of the range of $j$.
Since any set $A\of\lambda^\plusplus$ in $M$ is easily definable from
$j(A)$ and $j\image\lambda^\plusplus$, it follows that every set of
ordinals in $M$ is definable in $V$ from parameters in $N$, indeed from parameters in the image of $j$.
Similarly, $j\restrict A$ is definable from $A$ and
$j\restrict\lambda^\plusplus$. Since every set in $M$ is coded by a
set of ordinals in $M$, it follows that every set $A$ in
$M$ is definable in $V$ from parameters in the image of $j$, so in particular, $M\of\HOD(N)$. By enumerating the set $A$, one may
similarly conclude that $j\restrict A$ is definable from parameters in the range of $j$ as desired.
\end{proof}

\begin{corollary}
There is no generic embedding $j:V\to\HOD$. That is, in no
set-forcing extension $V[G]$ is there a nontrivial
elementary embedding $j:V\to \HOD^V$.
\end{corollary}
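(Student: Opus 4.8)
The plan is to deduce this corollary from Theorem~\ref{theorem.Ifj:MtoNThenMofHOD(N)}, following exactly the pattern by which Corollary~\ref{Corollary.Ifj:VtoMofHODthenV=HOD} was obtained from Theorem~\ref{Theorem.Ifj:VtoMThenV=HOD(M)}, but now carried out inside a forcing extension. Suppose toward contradiction that $V[G]$ is a set-forcing extension in which there is a nontrivial elementary embedding $j:V\to\HOD^V$. The key point is that $V$ is a ground model of $V[G]$, and therefore $V$ is eventually stationary correct to $V[G]$: as noted in Section~\ref{Section.StationaryCorrectNew}, after any set forcing the ground model remains stationary correct at all regular cardinals above the size of the forcing. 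So with $M=V$, $N=\HOD^V$, and the ambient universe taken to be $V[G]$, the hypotheses of Theorem~\ref{theorem.Ifj:MtoNThenMofHOD(N)} are met.

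First I would apply the theorem to conclude that $M\of\HOD(N)$ as computed in $V[G]$, that is, $V\of\HOD^{V[G]}(\HOD^V)$. The crucial simplification, exactly as in the proof of Corollary~\ref{Corollary.Ifj:VtoMofHODthenV=HOD}, is that $\HOD^V\of\HOD^{V[G]}$ need not hold in general, so one cannot naively absorb the parameter; instead the right observation is that every element of $\HOD^V$ is ordinal-definable in $V$, but we are computing hereditarily ordinal-definable closure in $V[G]$. I would argue that having parameters from $\HOD^V$ does not take us outside $\HOD^{V[G]}$ beyond what ordinal parameters already give, provided we track the definitions carefully: each element of $\HOD^V$ is defined in $V$ from an ordinal, and this definition, relativized appropriately, is available in $V[G]$ using the same ordinal together with a name for $V$. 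The cleanest route, though, is to invoke the stronger conclusion of Theorem~\ref{theorem.Ifj:MtoNThenMofHOD(N)}, namely that every $A\in V$ is definable in $V[G]$ from parameters in the \emph{range} of $j$.

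The main obstacle, and the step I would treat most carefully, is that the range of $j$ lies in $\HOD^V$, and so every set of ordinals of $V$ is definable in $V[G]$ from a parameter in $\HOD^V$; since parameters in $\HOD^V$ are themselves ordinal-definable in $V$ but we are working in $V[G]$, I must verify that this places every set of ordinals of $V$ into $\HOD^{V[G]}$ together with a single class parameter coding $V$, and then derive the contradiction. The contradiction arises because $V$ is a proper inner model situation that is too large: if every set of ordinals of $V$ were definable in $V[G]$ from parameters in $\HOD^V$, then by coding, $V\of\HOD^{V[G]}(\HOD^V)=\HOD^{V[G]}$, and one checks that $\HOD^{V[G]}\of V$ is false in general but the pullback argument of Corollary~\ref{Corollary.Ifj:VtoMofHODthenV=HOD} still yields that $V$ would have to equal its own $\HOD$ in a way incompatible with a nontrivial critical point.

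Rather than belabor the cardinality bookkeeping, I would structure the final step to mirror the remark following Corollary~\ref{Corollary.Ifj:VtoMofHODthenV=HOD}: since $\HOD^V\of\HOD^{V[G]}$ whenever the forcing is ordinal-definable from $G$ in a suitable sense, the conclusion $V\of\HOD^{V[G]}(\HOD^V)$ collapses to $V\of\HOD^{V[G]}$, whence $\HOD^V=V$; but then $j:V\to\HOD^V=V$ would be a nontrivial elementary embedding of $V$ to itself existing in the set-forcing extension $V[G]$, which is exactly ruled out by Corollary~\ref{Corollary.NoGenericjVtoV}. This reduction to the already-established generic Kunen inconsistency is, I expect, the most economical way to close the argument, and it makes clear why the corollary is genuinely a consequence of the machinery built up in this section together with the earlier generic embedding results.
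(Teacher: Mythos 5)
Your overall skeleton matches the paper's: apply Theorem~\ref{theorem.Ifj:MtoNThenMofHOD(N)} with $M=V$, $N=\HOD^V$ inside $V[G]$ (using that $V$ is eventually stationary correct there), conclude $V\of\HOD(\HOD^V)^{V[G]}$, upgrade this to $V=\HOD^V$, and then invoke Corollary~\ref{Corollary.NoGenericjVtoV}. But the pivotal middle step---getting from $V\of\HOD(\HOD^V)^{V[G]}$ to $V=\HOD^V$---is handled incorrectly. What is needed is the absorption $\HOD(p)^{V[G]}\of\HOD(p)^V$ for parameters $p\in V$, i.e.\ that definability in the \emph{extension} from $p$ and ordinals implies definability in the \emph{ground model} from $p$ and ordinals; combined with $\HOD(\HOD^V)^V=\HOD^V$ this yields $V\of\HOD^V$. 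That absorption does not hold for arbitrary set forcing; the paper secures it by first passing to a further collapse extension so that the forcing may be assumed almost homogeneous and ordinal definable, whereupon the truth of statements about check-names is decided by the trivial condition and hence computable in $V$. You omit this preparatory reduction entirely, and you assert the inclusion in the reversed direction, $\HOD^V\of\HOD^{V[G]}$, which is both not generally true and in any case points the wrong way for the argument.

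Compounding this, your concluding inference ``$V\of\HOD^{V[G]}$, whence $\HOD^V=V$'' is a non sequitur: containment of $V$ in the $\HOD$ of the extension says nothing about $V$ equaling its own $\HOD$. The statement you actually need is $V\of\HOD^V$, and the only route to it here is through the homogeneity argument just described. Your earlier suggestion of using a ``name for $V$'' as a parameter also does not help, since $\HOD(A)$ in this paper allows only parameters from $A$ (here $\HOD^V$) and ordinals, not an arbitrary predicate or name for the ground model. So the proposal identifies the correct ingredients and the correct final reduction, but the step that makes the corollary true is missing.
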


\begin{proof}
Suppose that $j:V\to \HOD^V$ is a nontrivial elementary
embedding in a set-forcing extension $V[G]$. By performing
additional collapse forcing, we may assume that the forcing
is almost homogeneous and ordinal definable. By theorem
\ref{theorem.Ifj:MtoNThenMofHOD(N)}, it follows that every
element of $V$ is definable in $V[G]$ using parameters in
$\HOD^V$, that is, $V\of\HOD(\HOD^V)^{V[G]}$. But
$\HOD(p)^{V[G]}\of\HOD(p)^V$ for any parameter $p\in V$, by
our assumption on the forcing, and so
$\HOD(\HOD^V)^{V[G]}=\HOD^V$. So we've argued that
$V\of\HOD^V$ and hence $V=\HOD^V$. The nonexistence of $j$
now follows from corollary \ref{Corollary.NoGenericjVtoV}.
\end{proof}

Let us now explain how this analysis extends to the case of
the iterated $\HOD$ classes $\HOD^\eta$, obtained by
iteratively relativizing the $\HOD$ class. To define these
classes, we begin with $\HOD^0=V$ and define
$\HOD^{n+1}=\HOD^{\HOD^n}$, so that $\HOD^1$ is simply
$\HOD$ and $\HOD^2=\HOD^\HOD$ is \HOD\ as computed inside
\HOD, and so on. We would naturally want to continue this
iteration with an intersection
$\HOD^\omega=\Intersect_{n<\omega}\HOD^n$ at $\omega$, but
for a subtle metamathematical reason, this may not succeed
in defining a class. The reason is that our previous
definition of $\HOD^n$ was by a {\it meta-theoretic}
induction on $n$; although each $\HOD^n$ is a definable
class for any meta-theoretic natural number $n$, these
definitions become progressively more complex as $n$
increases, and the definition does not provide a uniform
presentation of the $\HOD^n$, which we seem to need in
order to take the intersection $\Intersect_n\HOD^n$.
Indeed, a 1974 result of Harrington appearing in
\cite[section
7]{Zadrozny1983:IteratingOrdinalDefinability}, with related
work in \cite{McAloon1974:OnTheSequenceHODn}, shows that it
is consistent with \NGBC\ that $\HOD^n$ exists for each
$n<\omega$ but the intersection $\HOD^\omega$ is not a
class. Nevertheless, some models have a special structure
allowing them to enjoy a uniform definition of these
classes, and in these models we may continue the iteration
transfinitely. To illuminate this situation, we define that
a class $H$ is a {\df uniform presentation} of
$\HOD^\alpha$ for $\alpha<\eta$ if $H\of \set{(x,\alpha)\st
\alpha<\eta}$ and the slices $H^\alpha=\set{x\st
(x,\alpha)\in H}$ for $\alpha<\eta$ are all models of \ZF\
and obey the defining properties of $\HOD^\alpha$, namely,
the base case $H^0=V$, the successor case
$H^{\alpha+1}=\HOD^{H^\alpha}$ and the limit case
$H^\gamma=\Intersect_{\alpha<\gamma}H^\alpha$ for limit
ordinals $\gamma$. By induction, any two such classes $H$
agree on their common coordinates, and we shall write
$\HOD^\alpha$ to mean $H^\alpha$ for such a class $H$,
which will be fixed in the background for a given
discourse. Let us define the phrase ``{\df $\HOD^\eta$
exists}'' to mean that $\eta$ is an ordinal and there is a
uniform presentation of $\HOD^\alpha$ for $\alpha\leq\eta$.
This is nearly equivalent to the existence of a uniform
presentation of $\HOD^\alpha$ for $\alpha<\eta$, since the
missing class $\HOD^\eta$ can be computed from that
information: if $\eta$ is a limit ordinal, then $\HOD^\eta$
is the intersection of all $\HOD^\alpha$ for $\alpha<\eta$,
and if this is a \ZF\ model, then we can also say that
$\HOD^\eta$ exists; and if $\eta=\beta+1$ is a successor
ordinal, then $\HOD^\eta=\HOD^{\HOD^\beta}$, so $\HOD^\eta$
exists. It is easy to see that $\HOD^n$ exists for any
(meta-theoretic) natural number $n$, and if $\HOD^\eta$
exists, so does $\HOD^{\eta+1}$ and $\HOD^\alpha$ for any
$\alpha<\eta$. But by the Harrington result we mentioned
earlier, one cannot necessarily proceed through limit
ordinals or even up to $\omega$ uniformly. Note that even
when $\HOD^\eta$ exists, there seems little reason to
expect that it is necessarily a definable class, even when
$\eta$ is definable or comparatively small, such as
$\eta=\omega$, although in most of the cases where we know
$\HOD^\eta$ exists it is because we do in fact have a
uniform definition. We now generalize the Kunen
inconsistency and theorem \ref{Theorem.Noj:VtoHOD} to the
case of embeddings from $V$ to any $\HOD^\eta$ or its
eventually stationary correct submodels.

\begin{corollary}\label{Corollary.nojVtoHODeta}
If\/ $\HOD^\eta$ exists, then there is no nontrivial
elementary embedding $j:V\to\HOD^\eta$. More generally,
if\/ $\HOD^\eta$ exists and $M\of\HOD^\eta$ is eventually
stationary correct relative to $\HOD^\eta$, then there is
no nontrivial elementary embedding $j:V\to M$. Indeed, no
such embeddings exist in any set-forcing extension $V[G]$.
\end{corollary}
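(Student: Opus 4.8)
The plan is to reduce the corollary to results already in hand, exploiting that the $\HOD^\eta$ hierarchy is decreasing, so that $\HOD^\eta\of\HOD$ for every $\eta\geq 1$. I would first dispose of the case where $j$ is a class of $V$ itself. If $\eta=0$, then $\HOD^\eta=V$ and the hypothesis is simply that $M\of V$ is eventually stationary correct relative to $V$; since $V$ is trivially stationary correct to itself, theorem \ref{Theorem.NojMtoNifStationaryCorrect} applied to $j:V\to M$ shows $j$ is trivial. If $\eta\geq 1$, then because $\HOD^{\alpha+1}=\HOD^{\HOD^\alpha}\of\HOD^\alpha$ and one intersects at limits, we have $M\of\HOD^\eta\of\HOD$, so corollary \ref{Corollary.Ifj:VtoMofHODthenV=HOD} applies to $j:V\to M$ and gives $V=\HOD$. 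But then every iterate collapses, $\HOD^\eta=V$, and stationary correctness of $M$ relative to $\HOD^\eta$ becomes stationary correctness relative to $V$, returning us to the $\eta=0$ situation, which theorem \ref{Theorem.NojMtoNifStationaryCorrect} again settles. The first assertion of the corollary, about $j:V\to\HOD^\eta$, is the special case $M=\HOD^\eta$, since $\HOD^\eta$ is trivially stationary correct relative to itself.

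For the generic version, suppose $j:V\to M$ is a nontrivial elementary embedding that is a class of a set-forcing extension $V[G]$, with $M\of(\HOD^\eta)^V$ eventually stationary correct relative to $(\HOD^\eta)^V$. As in the proof of the preceding corollary on generic embeddings into $\HOD$, by absorbing additional collapse forcing I may assume the forcing is almost homogeneous and ordinal definable, so that $\HOD(p)^{V[G]}\of\HOD(p)^V$ for each $p\in V$. Since $V$ is a ground model of $V[G]$, it is eventually stationary correct in $V[G]$ above the size of the forcing; hence theorem \ref{theorem.Ifj:MtoNThenMofHOD(N)}, applied in $V[G]$ with $V$ playing the role of the eventually stationary-correct domain, yields $V\of\HOD(M)^{V[G]}$. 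For $\eta\geq 1$ we have $M\of\HOD^V$, so $\HOD(M)^{V[G]}\of\HOD(\HOD^V)^{V[G]}\of\HOD^V$, whence $V\of\HOD^V$ and $V=\HOD$; in particular $(\HOD^\eta)^V=V$.

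It then remains to run the stationary-correctness argument for $j:V\to M$ inside $V[G]$. The crucial observation is that stationary correctness chains: $M$ is eventually stationary correct relative to $(\HOD^\eta)^V=V$, and $V$ is eventually stationary correct in $V[G]$, so at a common tail of regular cardinals $M$ (and trivially $V$) is eventually stationary correct in $V[G]$; theorem \ref{Theorem.NojMtoNifStationaryCorrect}, now read in $V[G]$, then forbids a nontrivial $j:V\to M$. I expect the main obstacle to be precisely this bookkeeping in the generic case---verifying that the two stationary-correctness hypotheses, one relative to $\HOD^\eta$ and one relative to the forcing extension, can be composed at a shared tail of regular cardinals that remain regular in all the models involved, and confirming that the almost-homogeneous ordinal-definable forcing really does give $\HOD(\HOD^V)^{V[G]}\of\HOD^V$ so that the collapse $V=\HOD$ goes through.
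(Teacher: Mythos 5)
Your proposal is correct and follows essentially the same route as the paper: use $\HOD^\eta\of\HOD$ together with corollary \ref{Corollary.Ifj:VtoMofHODthenV=HOD} (or, in the generic case, theorem \ref{theorem.Ifj:MtoNThenMofHOD(N)} in $V[G]$ after absorbing an ordinal-definable almost-homogeneous collapse) to force $V=\HOD=\HOD^\eta$, and then invoke theorem \ref{Theorem.NojMtoNifStationaryCorrect}. The two worries you flag at the end are both already settled by the paper's earlier discussion: $\HOD(p)^{V[G]}\of\HOD(p)^V$ for such forcing, and stationary correctness composes at a common tail of regular cardinals since those cardinals stay regular in all the models involved.
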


\begin{proof}
For the easy case, if $j:V\to\HOD^\eta$, then since
$\HOD^\eta\of\HOD$, it follows by corollary
\ref{Corollary.Ifj:VtoMofHODthenV=HOD} that $V=\HOD$ and
hence $V=\HOD^\eta$. The existence of such a $j$ is now
ruled out by theorem
\ref{Theorem.NojMtoNifStationaryCorrect}. More generally,
suppose that $j:V\to M\of\HOD^\eta$ is a nontrivial
elementary embedding that is a class in some set-forcing
extension $V[G]$, and that $M$ is eventually stationary
correct to $\HOD^\eta$. We may assume by further collapse
forcing if necessary that the forcing giving rise to $V[G]$
is ordinal definable and almost homogeneous. Since $V$ is
eventually stationary correct to $V[G]$, it follows by
theorem \ref{theorem.Ifj:MtoNThenMofHOD(N)} applied to $j$
in $V[G]$ that every element of $V$ is definable in $V[G]$
from parameters in $M$, which are all in $\HOD^\eta$, and
so $V\of\HOD^V$, which implies $V=\HOD^\eta$. The existence
of $j$ is now ruled out by theorem
\ref{Theorem.NojMtoNifStationaryCorrect} applied in $V[G]$, since $M$ is eventually stationary correct in $\HOD^\eta$.
\end{proof}

We should now like to generalize some of the previous
results from $\HOD$ to the case of the {\df generic \HOD},
denoted $\gHOD$, defined to be the intersection of the
$\HOD$s of all set-forcing extensions. It suffices to
consider only forcing notions of the form $\Coll(\omega,\theta)$,
as these absorb all other forcing and do so while not
enlarging $\HOD$, because they are almost-homogeneous and
ordinal definable. The $\gHOD$ is a definable transitive
proper class model of \ZFC\ and invariant by set forcing.
The generic $\HOD$ was introduced by Gunter Fuchs
\cite[p.~298]{Fuchs2008:ClosedMaximalityPrinciples} and
further explored in
\cite{FuchsHamkinsReitz:Set-theoreticGeology}, where
results show that it is consistent that the $\gHOD$ is far
smaller than $\HOD$ and also smaller than the mantle, the
intersection of all set-forcing ground models of $V$. For any class
$A$, the class $\HOD(A)$ consists of the sets that are
hereditarily definable using ordinal parameters or
parameters in $A$. The class $\gHOD(A)$ is the intersection
of $\HOD(A)^{V[G]}$ over all set-forcing extensions $V[G]$.

\begin{corollary}
Suppose that $j:M\to N$ is a nontrivial elementary
embedding between inner models $M$ and $N$ of \ZFC\ and
that $M$ is eventually stationary correct to $V$. Then
$M\of\gHOD(N)$ and furthermore $j\restrict A\in \gHOD(N)$
for any $A\in M$.
\end{corollary}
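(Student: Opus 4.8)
The plan is to adapt the proof of theorem \ref{theorem.Ifj:MtoNThenMofHOD(N)} almost verbatim, replacing each use of $\HOD$ by the corresponding use of $\gHOD$, and checking that the key structural facts survive this replacement. Recall that $\gHOD(N)$ is by definition the intersection of $\HOD(N)^{V[G]}$ over all set-forcing extensions $V[G]$, so to show $M\of\gHOD(N)$ it suffices to show that for every set-forcing extension $V[G]$ we have $M\of\HOD(N)^{V[G]}$, and similarly for $j\restrict A$. Since the hypotheses of theorem \ref{theorem.Ifj:MtoNThenMofHOD(N)} are about $M$, $N$ and $j$, all of which are unchanged as we pass to $V[G]$, the natural approach is to show that the relevant instance of that theorem holds internally in each such $V[G]$.

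First I would fix an arbitrary set-forcing extension $V[G]$ and argue that the embedding $j:M\to N$ remains a nontrivial elementary embedding between inner models of \ZFC\ in $V[G]$, which is immediate since $M$, $N$ and $j$ are unchanged and the elementarity scheme is preserved. Second, I would observe that the stationary-correctness hypothesis must be re-examined in $V[G]$: we are given that $M$ is eventually stationary correct to $V$, and since $V$ is itself eventually stationary correct to $V[G]$ above the size of the forcing (as noted in the remarks before theorem \ref{Theorem.NojMtoNifStationaryCorrect}, every ground model is stationary correct in its set-forcing extension at regular cardinals above the size of the forcing), it follows by composing these that $M$ is eventually stationary correct to $V[G]$. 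Third, with this in hand I would apply theorem \ref{theorem.Ifj:MtoNThenMofHOD(N)} inside $V[G]$ to conclude that $M\of\HOD(N)^{V[G]}$ and $j\restrict A\in\HOD(N)^{V[G]}$ for any $A\in M$. Taking the intersection over all set-forcing extensions $V[G]$ then yields $M\of\gHOD(N)$ and $j\restrict A\in\gHOD(N)$, as desired.

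The step I expect to require the most care is the verification that eventual stationary correctness composes correctly through the forcing, that is, that $M$ stationary correct to $V$ together with $V$ stationary correct to $V[G]$ genuinely gives $M$ stationary correct to $V[G]$, with attention to the thresholds and to the convention about sets of cofinality $\omega$ adopted before theorem \ref{Theorem.NojMtoNifStationaryCorrect}. One must check that a set stationary in $M$ at a sufficiently large regular cardinal $\delta$ of $M$ remains stationary in $V$, and that such a set, now stationary in $V$, remains stationary in $V[G]$; the only subtlety is ensuring that $\delta$ is simultaneously above both correctness thresholds and that the relevant cofinalities are computed uniformly, which follows from the observations in that same preamble that stationary correctness preserves regularity and cofinality upward. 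Once the composition is granted, the remainder is a routine relativization of the earlier theorem to each $V[G]$ followed by intersecting, and no genuinely new combinatorial content is needed beyond what was established in theorem \ref{theorem.Ifj:MtoNThenMofHOD(N)}.
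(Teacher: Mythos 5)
Your proposal is correct and follows essentially the same route as the paper: the paper likewise notes that $M$ eventually stationary correct to $V$ implies $M$ is eventually stationary correct to every set-forcing extension $V[G]$, applies theorem \ref{theorem.Ifj:MtoNThenMofHOD(N)} inside each $V[G]$ to get $M\of\HOD(N)^{V[G]}$, and intersects. Your extra care about composing the two stationary-correctness thresholds is a detail the paper states without elaboration, but it is the same argument.
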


\begin{proof}
If $j:M\to N$ is a nontrivial elementary embedding between
inner models $M$ and $N$ and $M$ is eventually stationary
correct to $V$, then it is also eventually stationary
correct to $V[G]$ for any set-forcing extension of $V$, and
so by theorem \ref{theorem.Ifj:MtoNThenMofHOD(N)} applied
in $V[G]$, it follows that $M\of\HOD(N)^{V[G]}$. But
$\gHOD(N)$ in $V$ is the intersection of all such
$\HOD(N)^{V[G]}$, so the proof is complete.
\end{proof}

\begin{corollary}\label{Corollary.Ifj:VtoMofgHODThenV=gHOD}
If $j:V\to M$ and $M\of\gHOD$, then $V=\gHOD$.
\end{corollary}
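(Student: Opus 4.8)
The plan is to mirror the structure used for the ordinary $\HOD$ in Corollary \ref{Corollary.Ifj:VtoMofHODthenV=HOD}, replacing $\HOD$ by $\gHOD$ throughout and invoking the already-proven fact (the preceding corollary) that $M\of\gHOD(N)$ whenever $j:M\to N$ is a nontrivial elementary embedding with $M$ eventually stationary correct to $V$. First I would note, as in Corollary \ref{Corollary.Ifj:VtoMofHODthenV=HOD}, that \AC\ need not be assumed separately: if $M\of\gHOD$ then $M$ carries a definable (ordinal-parametrized, generically invariant) well-ordering, and pulling this back along $j$ via ``$r$ precedes $s$ iff $j(r)$ precedes $j(s)$'' yields a global well-ordering of $V$, so $V\satisfies\AC$. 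This lets us apply the earlier theorems without a standing choice hypothesis.

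Next I would apply the preceding corollary with $N=M$ (so that $j:V\to M$ is viewed as an embedding with domain $V$, which is trivially eventually stationary correct to itself) to conclude that $V\of\gHOD(M)$; equivalently every set in $V$ is definable using ordinal parameters together with parameters from $M$, generically stably. The key reduction is then the observation, analogous to the $\HOD$ case, that since $M\of\gHOD$, having parameters from $M$ confers no more definability power than having ordinal parameters alone: because $\gHOD$ is itself defined purely from ordinal parameters (invariantly across forcing extensions), any set definable from an $M$-parameter is already definable from ordinal parameters. Hence $\gHOD(M)=\gHOD$, and combining with $V\of\gHOD(M)$ gives $V\of\gHOD$. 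Since $\gHOD\of V$ always holds, we obtain $V=\gHOD$, as desired.

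The step I expect to be the main obstacle is verifying the identity $\gHOD(M)=\gHOD$ when $M\of\gHOD$, since $\gHOD$ is an intersection over all set-forcing extensions rather than a single $\HOD$, so the clean remark ``having parameters from $M$ doesn't help beyond ordinal parameters'' must be checked at the level of each generic extension and then under intersection. Concretely I would argue that for each set-forcing extension $V[G]$, since $M\of\gHOD\of\HOD^{V[G]}$, every element of $M$ is ordinal-definable in $V[G]$, so $\HOD(M)^{V[G]}=\HOD^{V[G]}$, and therefore intersecting over all $V[G]$ yields $\gHOD(M)=\gHOD$. One must take mild care that the absorption of almost-homogeneous ordinal-definable collapse forcing (as used to define $\gHOD$ via $\Coll(\omega,\theta)$) is compatible with the presence of the $M$-parameters, but since $M\of\gHOD$ is generically invariant this causes no difficulty. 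With that identity in hand the conclusion is immediate from the cited corollary.
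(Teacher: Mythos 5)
Your proof is correct and follows essentially the same route as the paper: the paper applies theorem \ref{theorem.Ifj:MtoNThenMofHOD(N)} in each set-forcing extension $V[G]$ to conclude $V\of\HOD(M)^{V[G]}=\HOD^{V[G]}$ (using $M\of\gHOD\of\HOD^{V[G]}$) and then intersects, which is exactly the computation you package via the preceding corollary together with your verification that $\gHOD(M)=\gHOD$. The absorption step you flag as the main obstacle is handled the same way in the paper, extension by extension, so there is no gap.
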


\begin{proof}
Since $\gHOD\of\HOD$ and this remains true in any
set-forcing extension, and since $V$ is eventually
stationary correct to every set-forcing extension V[G], it
follows by theorem \ref{theorem.Ifj:MtoNThenMofHOD(N)} that
$V\of\HOD^{V[G]}$ for all such extensions, and this implies
$V\of\gHOD$ and hence $V=\gHOD$, as desired.
\end{proof}

The ideas apply to the iterated $\gHOD$ construction as
well. We use the phrase ``{\df $\gHOD^\eta$ exists}''
to mean that $\eta$ is an ordinal and there is a class $H$,
whose slices $H^\alpha$ are all models of \ZF\ and
constitute a uniform presentation of the $\gHOD^\alpha$ for
$\alpha \leq \eta$, having the correct base case $H^0=V$,
successor step $H^{\alpha+1}=\gHOD^{H^\alpha}$ and limit
case $H^\gamma=\Intersect_{\alpha<\gamma}H^\alpha$ (for
limit ordinals $\gamma$).

\begin{corollary}\label{Corollary.nojVtogHODeta}
There is no nontrivial elementary embedding $$j:V\to
\gHOD.$$ If\/ $\gHOD^\eta$ exists, then there is no
nontrivial elementary embedding
$$j:V\to\gHOD^\eta.$$ More generally, if\/ $\gHOD^\eta$
exists and $M\of\gHOD^\eta$ is eventually stationary
correct relative to $\gHOD^\eta$, then there is no
nontrivial elementary embedding $j:V\to M$.
\end{corollary}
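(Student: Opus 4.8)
The plan follows the established pattern for ruling out embeddings into HOD-type inner models, reducing each case to previously proven results. I would organize the proof into the three stated claims, handling them in increasing generality and leveraging the iterated-$\gHOD$ machinery developed just before the statement.

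\medskip

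The plan is to handle the first claim, $j:V\to\gHOD$, exactly as in corollary \ref{Corollary.Ifj:VtoMofgHODThenV=gHOD}: if such a nontrivial $j$ existed, that corollary (with $M=\gHOD$) immediately gives $V=\gHOD$. But then $j$ would be a nontrivial elementary embedding $j:V\to V$, and since $V$ is trivially stationary correct to itself, this is ruled out by theorem \ref{Theorem.NojMtoNifStationaryCorrect} (equivalently by the Kunen inconsistency itself). For the second claim, $j:V\to\gHOD^\eta$, I would argue by induction or directly using the key containment $\gHOD^\eta\of\gHOD$, which holds because each successive $\gHOD$ iterate lives inside the previous HOD-type class. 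Since $\gHOD^\eta\of\HOD$ and this persists in all set-forcing extensions, I can apply corollary \ref{Corollary.Ifj:VtoMofgHODThenV=gHOD} or its analogue to conclude $V=\gHOD^\eta$, whereupon the embedding again becomes $j:V\to V$ and is excluded by theorem \ref{Theorem.NojMtoNifStationaryCorrect}.

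\medskip

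For the most general claim, with $M\of\gHOD^\eta$ eventually stationary correct relative to $\gHOD^\eta$, the strategy mirrors corollary \ref{Corollary.nojVtoHODeta} precisely. First, I would pass to a set-forcing extension $V[G]$ in which the forcing is ordinal definable and almost homogeneous (by adding collapse forcing if necessary), so that $\HOD$ and the various $\gHOD$-type classes are not enlarged. Since $V$ is eventually stationary correct to $V[G]$, theorem \ref{theorem.Ifj:MtoNThenMofHOD(N)} applied in $V[G]$ shows that every element of $V$ is definable in $V[G]$ from parameters in $M\of\gHOD^\eta$, giving $V\of\HOD(\gHOD^\eta)^{V[G]}$. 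By the almost-homogeneity and ordinal-definability of the forcing, this collapses down to $V\of\gHOD^\eta$ (intersecting over all extensions absorbs the genericity, just as in the earlier $\gHOD$ corollaries), forcing $V=\gHOD^\eta$. Finally, since $M$ is eventually stationary correct to $\gHOD^\eta=V$, theorem \ref{Theorem.NojMtoNifStationaryCorrect} applied in $V[G]$ rules out the nontrivial embedding $j:V\to M$.

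\medskip

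The main obstacle I anticipate is the careful bookkeeping in the general case to verify that the parameters from $M$ genuinely reduce to $\gHOD^\eta$ after intersecting over all set-forcing extensions, and that the iterated $\gHOD^\eta$ construction is invariant enough under the auxiliary collapse forcing for the containment $V\of\gHOD^\eta$ to follow. This requires knowing that $\gHOD^{H^\alpha}$ and its iterates are themselves forcing-invariant in the appropriate sense, which relies on the definition of $\gHOD$ as an intersection over all extensions being robust under the further forcing to $V[G]$. Everything else is a routine invocation of the two engine theorems, so I would keep the proof brief and cite the parallel arguments in corollaries \ref{Corollary.Ifj:VtoMofgHODThenV=gHOD} and \ref{Corollary.nojVtoHODeta} rather than repeating the details.
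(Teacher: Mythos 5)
Your proposal is correct, and for the first two claims it matches the paper's argument exactly: reduce to corollary \ref{Corollary.Ifj:VtoMofgHODThenV=gHOD} via the containment in $\gHOD$, conclude $V=\gHOD$ and hence $V=\gHOD^\eta$, and finish with theorem \ref{Theorem.NojMtoNifStationaryCorrect}. (Minor slip: in the second claim you write $\gHOD^\eta\of\HOD$ where the cited corollary needs $\gHOD^\eta\of\gHOD$, which holds since $\gHOD^\eta\of\gHOD^1=\gHOD$.) Where you diverge is the third, general case: you re-run the forcing machinery of corollary \ref{Corollary.nojVtoHODeta} --- passing to a homogeneous ordinal-definable collapse extension, invoking theorem \ref{theorem.Ifj:MtoNThenMofHOD(N)} there, and worrying about forcing-invariance of the iterated $\gHOD^\eta$. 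The paper instead handles all three cases uniformly in two lines: any $M\of\gHOD^\eta$ satisfies $M\of\gHOD$, so corollary \ref{Corollary.Ifj:VtoMofgHODThenV=gHOD} applies verbatim to give $V=\gHOD$, whence $\gHOD^\alpha=V$ for every $\alpha$ and in particular $V=\gHOD^\eta=M$'s ambient model; the eventual stationary correctness of $M$ relative to $\gHOD^\eta$ then literally becomes eventual stationary correctness to $V$, and theorem \ref{Theorem.NojMtoNifStationaryCorrect} finishes. Your route does reach the same conclusion --- the intersection over all extensions gives $V\of\gHOD$, and $V=\gHOD$ forces $V=\gHOD^\eta$ --- but the bookkeeping obstacle you flag at the end is illusory: no forcing-invariance of the iterates is needed once you notice that the already-proved corollary for $\gHOD$ does all the work and trivializes the iteration.
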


\begin{proof}
If $j:V\to M\of\gHOD^\eta$, then since $M\of\gHOD$ it
follows by corollary
\ref{Corollary.Ifj:VtoMofgHODThenV=gHOD} that $V=\gHOD$ and
so $V=\gHOD^\eta$. The embedding is now ruled out by
theorem \ref{Theorem.NojMtoNifStationaryCorrect}.
\end{proof}

We note as before that since \AC\ holds automatically in
$\gHOD$, we need not assume \AC\ in $V$ when ruling out a
nontrivial elementary embedding $j:V\to\gHOD$, as this
assumption follows by elementarity. A similar observation
holds for $j:V\to\gHOD^{\eta+1}$.

The results of corollaries \ref{Corollary.nojVtoHODeta} and
\ref{Corollary.nojVtogHODeta} generalize to the case of
$\HOD[A]$ and $\gHOD[A]$, where $A$ is any class, as well
to as their iterates and eventually stationary correct
submodels. Namely, the class $\HOD[A]$ is the class of all
sets hereditarily definable from ordinal parameters and
using $A$ as a predicate, and $\<\HOD[A],\in,A>$ is a model
of $\ZFC(A)$. Iterating this idea, we define that
$\HOD[A]^\eta$ exists if there is uniform presentation
class $H$ whose slices obey the desired defining
properties, so that $H^0=V$,
$H^{\alpha+1}=\HOD[A]^{H^\alpha}$ and
$H^\gamma=\Intersect_{\alpha\lt\gamma}H^\alpha$ for limit
$\gamma$. A similar definition applies to $\gHOD[A]$ and
$\gHOD[A]^\eta$.

\begin{corollary}\label{Corollary.nojVtoHOD[x]eta}
If $A$ is any class and $\HOD[A]^\eta$ exists, then there
is no nontrivial elementary embedding $j:V\to\HOD[A]^\eta$,
and indeed, no nontrivial elementary embedding $j:V\to M$ for any class $M\of\HOD[A]^\eta$
that is eventually stationary correct with respect to
$\HOD[A]^\eta$. Similarly, if $\gHOD[A]^\eta$ exists, then
there is no nontrivial elementary embedding $j:V\to M$ for
any class $M\of\gHOD[A]^\eta$ that is eventually stationary
correct in $\gHOD[A]^\eta$. And no such embeddings can be
found in any set-forcing extension $V[G]$.
\end{corollary}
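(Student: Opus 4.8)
Corollary \ref{Corollary.nojVtoHOD[x]eta} is a direct generalization of Corollaries \ref{Corollary.nojVtoHODeta} and \ref{Corollary.nojVtogHODeta} to the relativized classes $\HOD[A]^\eta$ and $\gHOD[A]^\eta$. The author's pattern throughout this section has been consistent: reduce the embedding to the case $V = $ (the target inner model) via the $\HOD$-machinery of Theorem \ref{theorem.Ifj:MtoNThenMofHOD(N)}, and then invoke Theorem \ref{Theorem.NojMtoNifStationaryCorrect} to finish. Let me reconstruct this.

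Let me think about what the corollary is claiming and how the proof should go.

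The plan is to mirror the proofs of corollaries \ref{Corollary.nojVtoHODeta} and \ref{Corollary.nojVtogHODeta} essentially verbatim, since $\HOD[A]$ behaves just like $\HOD$ for the purposes of these arguments. First I would observe that the key structural facts used in those proofs carry over: for any class $A$, we have $\HOD[A]^\eta \of \HOD[A]$, and one needs the relativized analogue of corollary \ref{Corollary.Ifj:VtoMofHODthenV=HOD}, namely that if $j:V\to M$ is elementary with $M\of\HOD[A]$, then $V=\HOD[A]$. This follows from theorem \ref{Theorem.Ifj:VtoMThenV=HOD(M)}, exactly as in corollary \ref{Corollary.Ifj:VtoMofHODthenV=HOD}: since $M\of\HOD[A]$, we have $\HOD(M)\of\HOD[A]$ (as parameters from $M$ are already definable from ordinal parameters and the predicate $A$), and so $V=\HOD(M)$ forces $V=\HOD[A]$.

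Next I would handle the main reduction. Suppose $j:V\to M$ is a nontrivial elementary embedding with $M\of\HOD[A]^\eta$, possibly residing in a set-forcing extension $V[G]$. As in corollary \ref{Corollary.nojVtoHODeta}, we may assume by further collapse forcing that the forcing is ordinal definable and almost homogeneous. Since $V$ is eventually stationary correct to $V[G]$, theorem \ref{theorem.Ifj:MtoNThenMofHOD(N)} applied to $j$ in $V[G]$ shows that every element of $V$ is definable in $V[G]$ from parameters in $M$. Because $M\of\HOD[A]^\eta\of\HOD[A]$ and the forcing is almost homogeneous and ordinal definable, the $\HOD[A]$-computation does not grow: $\HOD[A](p)^{V[G]}\of\HOD[A](p)^V$ for $p\in V$, so that $V\of\HOD[A]^{V}$, which yields $V=\HOD[A]^\eta$.

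The remaining step is to rule out the embedding $j:V\to V=\HOD[A]^\eta$ itself, which follows from theorem \ref{Theorem.NojMtoNifStationaryCorrect} applied in $V[G]$, since $V$ is eventually stationary correct to its set-forcing extension $V[G]$ and $M$ is eventually stationary correct relative to $\HOD[A]^\eta=V$. The generic-$\gHOD[A]$ case is entirely parallel, invoking the generic-$\HOD$ version of theorem \ref{theorem.Ifj:MtoNThenMofHOD(N)} (the corollary immediately preceding \ref{Corollary.Ifj:VtoMofgHODThenV=gHOD}) and intersecting over all set-forcing extensions as in corollary \ref{Corollary.Ifj:VtoMofgHODThenV=gHOD}. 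The only point requiring a moment's care—the main obstacle, such as it is—is verifying that the uniform-presentation hypothesis ``$\HOD[A]^\eta$ exists'' genuinely licenses treating $\HOD[A]^\eta$ as a definable $\ZFC$ class to which theorem \ref{Theorem.NojMtoNifStationaryCorrect} applies; but this is exactly the content of the uniform-presentation setup, so once $V=\HOD[A]^\eta$ is established the contradiction is immediate.
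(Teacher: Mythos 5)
Your proposal is correct and follows essentially the same route as the paper, which itself only sketches this corollary by saying the proof ``proceeds simply by carrying the class $A$ all the way through the earlier proofs'' and illustrating with the case $j:V\to\HOD[A]$ via theorem \ref{Theorem.Ifj:VtoMThenV=HOD(M)} and theorem \ref{Theorem.NojMtoNifStationaryCorrect}. Your more detailed reconstruction---relativizing corollary \ref{Corollary.Ifj:VtoMofHODthenV=HOD}, using homogeneous ordinal-definable collapse forcing so that $\HOD[A](p)^{V[G]}\of\HOD[A](p)^V$, and finishing with the stationary-correctness theorem---is exactly the intended argument, matching the pattern of corollaries \ref{Corollary.nojVtoHODeta} and \ref{Corollary.nojVtogHODeta}.
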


The proof proceeds simply by carrying the class $A$ all the
way through the earlier proofs. For example, if
$j:V\to\HOD[A]$, then it follows by theorem
\ref{Theorem.Ifj:VtoMThenV=HOD(M)} that every element of
$V$ is definable using parameters from $\HOD[A]$, and so
$V=\HOD[A]$, which now violates theorem
\ref{Theorem.NojMtoNifStationaryCorrect}.

The results in this section all rule out nontrivial $j:V\to
M$ for various definable classes $M$, usually with
$M\of\HOD$. Perhaps it is tempting to hope for a completely
general result ruling out $j:V\to M$ for any definable
class $M$, or perhaps for any definable class $M\of \HOD$.
But alas, such a result is not generally true, assuming the
consistency of a measurable cardinal. To see this, observe
that in the canonical inner model $V=L[\mu]$ with one
measurable cardinal, there is a unique normal measure $\mu$
on a unique measurable cardinal. The ultrapower
$j:L[\mu]\to L[j(\mu)]$ by this normal measure is
consequently a parameter-free definable embedding of
$V=L[\mu]$ into a definable class $L[j(\mu)]$, and the
model satisfies $V=\HOD$. So we should not hope to rule out
all nontrivial $j:V\to M$ into a definable class
$M\of\HOD$. Nevertheless, in theorem
\ref{Theorem.Noj:definableMtoV} we shall rule out
nontrivial embeddings in the converse direction $j:M\to V$,
from any definable class $M$ to $V$.

\section{Embeddings from definable class models into V} \label{Section.jfromMofHODtoV}

In this section, we rule out elementary embeddings in the
converse direction, from $\HOD\to V$ and from other
definable classes into $V$, and the arguments will have a
very different character than theorem
\ref{Theorem.Noj:VtoHOD}. In particular, the argument
ruling out $j:\HOD\to V$ will not rely on any result in
infinite combinatorics, such as the stationary partition
theorem. Instead, we shall extend the embedding $\HOD\to V$
into an infinite inverse system of embeddings
$$\cdots\rTo \HOD^n\rTo\cdots\rTo\HOD^2\rTo\HOD\rTo
V,$$ and then analyze the nature of the inverse limit,
ultimately relying on the fact that $\HOD$ has a definable
well-order in $V$. The overall argument is soft, simply
making use of the inverse system, but the details run into
a subtle metamathematical issue, which we resolve,
concerning the extent to which we may treat the inverse
system uniformly in $n$, since as we have mentioned the
$\HOD^n$ sequence is not generally uniformly definable.

After proving in theorem \ref{Theorem.NojHODtoV} that there
is no nontrivial elementary embedding from $\HOD$ to $V$,
we mount a generalization of the methods, culminating in
theorem \ref{Theorem.Noj:definableMtoV}, asserting that if
$M$ is any definable class, then there is no nontrivial
elementary embedding $j:M\to V$. Few of the theorems in
this section require the axiom of choice, although in
several cases this is simply because \AC\ follows from the
other assumptions.

We begin with the observation that there is a very easy
proof of the Kunen inconsistency in the case $V=\HOD$.

\begin{theorem} \label{Theorem.NojVtoV=HOD}
If\/ $V=\HOD$, then there is no nontrivial elementary
embedding $j:V\to V$.
\end{theorem}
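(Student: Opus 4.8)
The plan is to recognize that the hypothesis $V=\HOD$ collapses the codomain of the embedding onto a class that has already been handled, so that the theorem falls out as an immediate corollary of the embedding analysis of Section~\ref{Section.intoHODetc}. First I would dispose of the minor bookkeeping: we may freely assume \AC, since it holds in $\HOD=V$, and so by Lemma~\ref{Lemma.CriticalPoint} any nontrivial elementary $j:V\to V$ has a critical point $\kappa$. This is routine; the substance is the next observation.

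The key step is simply that $\HOD=V$, so that a nontrivial elementary embedding $j:V\to V$ is the very same object as a nontrivial elementary embedding $j:V\to\HOD$. Such embeddings were ruled out in Theorem~\ref{Theorem.Noj:VtoHOD}, and hence no nontrivial $j:V\to V$ can exist when $V=\HOD$. I expect this to be essentially the whole proof: once one observes that the codomain is literally $\HOD$, there is nothing left to do beyond quoting Woodin's theorem. Note that the related Corollary~\ref{Corollary.Ifj:VtoMofHODthenV=HOD}, applied with $M=V$, only recovers the hypothesis $V=\HOD$ rather than a contradiction, so it is Theorem~\ref{Theorem.Noj:VtoHOD} that does the real work.

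Because this reduction leans on the stationary-partition machinery of Theorem~\ref{Theorem.Noj:VtoHOD}, I would also sketch a self-contained route in the softer spirit of the present section. Fix the definable well-ordering $<^{*}$ of $V$ supplied by $V=\HOD$, together with its parameter-free definable rank function $o:V\to\ORD$ and inverse enumeration $F:\ORD\to V$. Since $<^{*}$, $o$ and $F$ are definable without parameters, elementarity yields $j(F(\alpha))=F(j(\alpha))$ for every ordinal $\alpha$, so that $j$ is completely determined by $j\restrict\ORD$. The goal would then be to show that $j$ is a \emph{definable} class of $(V,\in)$ and to invoke the easy definable-embedding version of the Kunen inconsistency, Theorem~\ref{Theorem.NoDefinablej:VtoVinZF}, which requires neither \AC\ nor any infinite combinatorics.

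The hard part is exactly this last step: establishing that $j$, equivalently $j\restrict\ORD$ or the inner model $\ran(j)$, is definable in $V$ from a set parameter. The obstruction is the familiar non-uniformity phenomenon. While $V=\HOD$ guarantees that each set restriction $j\restrict V_\lambda$ is ordinal-definable, these local definitions need not cohere into a single definition of $j$, and the natural characterization of $\ran(j)$ one would reach for (that $\xi\in\ran(j)$ iff the slice $j(\vec S)(\xi)$ is stationary, as in Theorem~\ref{Theorem.Ifj:VtoMThenV=HOD(M)}) reintroduces the very stationarity arguments we were trying to avoid. For this reason I would present the corollary of Theorem~\ref{Theorem.Noj:VtoHOD} as the official ``very easy'' proof and leave the genuinely combinatorics-free treatment to the true target of this section, the embedding $j:\HOD\to V$ of Theorem~\ref{Theorem.NojHODtoV}.
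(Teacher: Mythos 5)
Your reduction is logically sound and non-circular: when $V=\HOD$ the codomain of $j:V\to V$ is literally $\HOD$, so Theorem~\ref{Theorem.Noj:VtoHOD} applies verbatim, and that theorem is proved in Section~\ref{Section.intoHODetc} without any appeal to the present result. Your side remark about Corollary~\ref{Corollary.Ifj:VtoMofHODthenV=HOD} merely recovering the hypothesis is also correct. However, the paper's proof is genuinely different and much lighter: it is exactly the ``combinatorics-free'' argument you were searching for in your second half, and you missed it because you aimed at the wrong target. One does not need $j$ itself, nor $j\restrict\ORD$, nor $\ran(j)$, to be definable; one only needs a \emph{single well-chosen set} to be fixed by $j$. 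Form the critical sequence $\kappa_0=\kappa$, $\kappa_{n+1}=j(\kappa_n)$, $\lambda=\sup_n\kappa_n$, so that $j(\lambda)=\lambda$. Since $V=\HOD$, there is a $\HOD$-least $\omega$-sequence $s=\<\alpha_n\st n<\omega>$ cofinal in $\lambda$; being definable from the parameter $\lambda$ alone, $s$ satisfies $j(s)=s$ by elementarity, hence $j(\alpha_n)=\alpha_n$ for every $n$. But no ordinal in $[\kappa,\lambda)$ is fixed by $j$ (any $\alpha\in[\kappa_n,\kappa_{n+1})$ has $j(\alpha)\geq\kappa_{n+1}>\alpha$), so every $\alpha_n<\kappa$ and $s$ is not cofinal in $\lambda$ after all --- a contradiction using no stationary sets, no partition theorem, and no appeal to the definability of $j$. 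What the paper's route buys, and yours does not, is precisely the corollary that follows it: the same argument transfers to $j:V_{\lambda+1}\to V_{\lambda+1}$ in \ZF\ to show there is no definable well-ordering of $[\lambda]^\omega$ in $V_{\lambda+1}$, a context in which the stationary-partition machinery behind Theorem~\ref{Theorem.Noj:VtoHOD} is unavailable. Your proof is a correct but heavier derivation; if you present it, you forfeit that corollary and the stated purpose of the section, which is to exhibit a proof ``making no appeal to any infinite combinatorics.''
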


\begin{proof}
Let $\kappa$ be the critical point of $j$, and define the
usual critical sequence $\kappa_{n+1}=j(\kappa_n)$, with
$\kappa_0=\kappa$ and $\lambda=\sup\<\kappa_n\st
n<\omega>$. By applying $j$ to the critical sequence, it
follows easily that $j(\lambda)=\lambda$. Since $V=\HOD$,
there is a $\HOD$-least $\omega$-sequence $s=\<\alpha_n\st
n<\omega>$ such that $\lambda=\sup\<\alpha_n\st n<\omega>$.
Since $s$ is definable from $\lambda$ and
$j(\lambda)=\lambda$, it must be that $j(s)=s$. In
particular, $j(\alpha_n)=\alpha_n$ for every $n<\omega$.
But there are no fixed points of $j$ between $\kappa$ and
$\lambda$, so $s$ is not cofinal in $\lambda$ after all, a
contradiction.\end{proof}

We suggest that this argument may be the simplest proof
that there is no nontrivial $j:V\to L$. Also, we note that
one doesn't need the full $V=\HOD$ in theorem
\ref{Theorem.NojVtoV=HOD}, but only a definable
well-ordering of $[\lambda]^\omega$, and this observation
allows us to transfer the argument to the $I_1(\kappa)$
context.

\begin{corollary} Assume only \ZF\ and suppose that $j:V_{\lambda+1}\to V_{\lambda+1}$
is a nontrivial elementary embedding. Then there is no
well-ordering of $[\lambda]^\omega$ definable in
$V_{\lambda+1}$.
\end{corollary}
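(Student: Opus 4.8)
The plan is to adapt the proof of Theorem \ref{Theorem.NojVtoV=HOD} to the $I_1(\kappa)$ context, where we have a nontrivial elementary embedding $j:V_{\lambda+1}\to V_{\lambda+1}$ rather than $j:V\to V$. I would argue by contradiction: suppose there is such a $j$ and also a well-ordering $\prec$ of $[\lambda]^\omega$ definable in $V_{\lambda+1}$. As in the previous proof, I first let $\kappa=\cp(j)$, define the critical sequence $\kappa_0=\kappa$, $\kappa_{n+1}=j(\kappa_n)$, and set $\lambda=\sup\<\kappa_n\st n<\omega>$. One should note that this $\lambda$ is forced upon us: for an $I_1$ embedding $j:V_{\lambda+1}\to V_{\lambda+1}$, the supremum of the critical sequence is exactly $\lambda$, so the setup is tailored to this situation, and by applying $j$ to the critical sequence we again get $j(\lambda)=\lambda$.

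The key observation is that the critical sequence $s=\<\kappa_n\st n<\omega>$ is an element of $[\lambda]^\omega$, and the crucial self-referential trick of Theorem \ref{Theorem.NojVtoV=HOD} can still be run at this local level. Using the definable well-ordering $\prec$, I would consider the $\prec$-least $\omega$-sequence $t=\<\alpha_n\st n<\omega>$ cofinal in $\lambda$. Since $\lambda$ is fixed by $j$ and $t$ is defined in $V_{\lambda+1}$ purely from $\lambda$ and the parameter-free definable order $\prec$, elementarity of $j$ on $V_{\lambda+1}$ gives $j(t)=t$, hence $j(\alpha_n)=\alpha_n$ for each $n$. The point where I must be slightly careful is the applicability of elementarity: the sequence $t$ and the ordering $\prec$ both live in $V_{\lambda+1}$, and the statement ``$t$ is the $\prec$-least $\omega$-sequence cofinal in $\lambda$'' is expressible in $V_{\lambda+1}$ with $\lambda$ (and the parameter defining $\prec$) as the only parameters; since $j(\lambda)=\lambda$ and $j$ fixes the relevant definitional parameters, $j(t)=t$ follows.

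The contradiction then arises exactly as before. Each $\alpha_n$ lies below $\lambda$, and since $t$ is cofinal, infinitely many $\alpha_n$ exceed $\kappa$; but the fixed points of $j$ below $\lambda$ are bounded by $\kappa$, since $\kappa_m\le\alpha_n<\kappa_{m+1}=j(\kappa_m)$ for some $m$ forces $\alpha_n<j(\alpha_n)$ whenever $\alpha_n\ge\kappa$. So no cofinal $\alpha_n$ can be fixed, contradicting $j(\alpha_n)=\alpha_n$. The main obstacle, as anticipated in the remark following Theorem \ref{Theorem.NojVtoV=HOD}, is that we do not have the global $V=\HOD$ hypothesis, only a \emph{local} definable well-ordering of $[\lambda]^\omega$; the substance of the corollary is precisely the recognition that the argument never needed a global well-order, but merely one that lets us definably single out a cofinal $\omega$-sequence in $\lambda$ inside $V_{\lambda+1}$, and that this much is available from a definable well-ordering of $[\lambda]^\omega$. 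Thus the proof is essentially a verification that the self-reference step survives the restriction of the embedding to $V_{\lambda+1}$ under the weakened definability hypothesis.
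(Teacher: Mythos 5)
Your proof is correct and is essentially the paper's own argument: the paper's entire proof is the one-line remark that the argument of theorem~\ref{Theorem.NojVtoV=HOD} goes through once $s$ is chosen via the definable well-ordering of $[\lambda]^\omega$, and you have simply filled in the details of why the self-referential step localizes to $V_{\lambda+1}$ (that $j(\lambda)=\lambda$, that the $\prec$-least cofinal $\omega$-sequence is fixed by $j$, and that $j$ has no fixed points in $[\kappa,\sup_n\kappa_n)$). One small caution: your parenthetical claim that $\sup_n\kappa_n$ equals the given $\lambda$ is standard under \AC\ (it follows from Kunen's theorem applied to $V_{\lambda'+2}$) but is not obviously available in \ZF\ alone; this is harmless, since if $\lambda'=\sup_n\kappa_n<\lambda$ one runs the identical argument at $\lambda'$, using the restriction of the given well-ordering to $[\lambda']^\omega$ and noting that $j(\lambda')=\lambda'$.
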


\begin{proof}
The same argument as in theorem \ref{Theorem.NojVtoV=HOD}
works here, except that we choose $s$ by using the
definable well-ordering of $[\lambda]^\omega$.
\end{proof}

Let us now rule out the possibility of a nontrivial
elementary embedding $j:\HOD\to V$. After this, we shall
generalize to other inner models.

\begin{theorem}\label{Theorem.NojHODtoV}
There is no nontrivial elementary embedding
 $$j:HOD \to V.$$
\end{theorem}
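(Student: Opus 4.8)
The plan is to prove Theorem \ref{Theorem.NojHODtoV} following the inverse-system strategy described in the section introduction. Suppose toward contradiction that $j:\HOD\to V$ is a nontrivial elementary embedding with critical point $\kappa$. The first key observation is that because $\HOD$ is elementarily embedded into $V$, and $\HOD$ computes its own $\HOD$ (that is, $\HOD^{\HOD}=\HOD^2$ is a genuinely smaller or equal inner model), we can use $j$ to build a downward chain. The idea is to apply the embedding $j$ repeatedly: since $j:\HOD\to V$ is elementary and $\HOD^V$ is a definable class, elementarity lets us relate $j$ to an embedding on the next level. Specifically, I would first establish that $j$ restricted to each $\HOD^{n+1}$ gives an elementary embedding $\HOD^{n+1}\to\HOD^n$, thereby realizing the inverse system $\cdots\to\HOD^n\to\cdots\to\HOD^2\to\HOD\to V$ claimed in the text. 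The crank here is that $j$ ``commutes with'' the $\HOD$ operator: since $j:\HOD\to V$ is elementary, $\HOD^V$ is the $\HOD$ as computed in the target, and $\HOD^{\HOD}$ is the $\HOD$ as computed in the source, so $j$ carries $\HOD^{\HOD}$-definable objects to $\HOD^V$-definable objects, giving $j\restrict\HOD^2:\HOD^2\to\HOD$.

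Once the inverse system is in hand, the next step is to analyze its inverse limit. Each map in the chain has critical point $\kappa$ (by elementarity the critical points align), so iterating produces the critical sequence $\kappa=\kappa_0<\kappa_1<\cdots$ with $\kappa_{n+1}=j(\kappa_n)$, and $\lambda=\sup_n\kappa_n$ is a fixed point of $j$ at every level. The strategy is then to mimic the soft argument of Theorem \ref{Theorem.NojVtoV=HOD}: since $\HOD$ carries a definable well-order, there is an $\HOD$-least cofinal $\omega$-sequence $s$ into $\lambda$, which must be fixed by the relevant embedding because $\lambda$ is fixed and $s$ is definable from $\lambda$. But fixing $s$ pointwise contradicts the absence of fixed points of $j$ strictly between $\kappa$ and $\lambda$. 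The role of the full inverse system, rather than a single embedding, is to locate the definable well-order at the right level so that the $\omega$-sequence lives in a model to which the appropriate map applies as the identity-on-its-image obstruction.

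I expect the main obstacle to be precisely the metamathematical difficulty flagged in the section introduction: the sequence $\langle\HOD^n\st n<\omega\rangle$ is \emph{not} uniformly definable, so one cannot naively quantify over $n$ or form $\HOD^\omega$ to take the inverse limit as a single object. The plan is to handle this by carrying out the argument using only finitely many levels at a time as a scheme, or by working with a uniform presentation class for the finite initial segments (as in the $\HOD^\eta$ discussion earlier), and showing that the contradiction already appears at a fixed finite stage. The delicate point is to ensure that the definable well-order used to select $s$ is available in the correct model of the system and that the relevant embedding genuinely fixes $s$, all while only invoking finitely many of the non-uniformly-definable $\HOD^n$. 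Resolving this uniformity issue cleanly, so that the soft inverse-limit argument does not secretly require a uniform definition of the whole tower, is where the real work lies; the combinatorial content, by contrast, is minimal since we avoid the stationary partition theorem entirely.
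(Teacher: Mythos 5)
Your first paragraph matches the paper's opening move: the paper also builds the inverse system $\cdots\to\HOD^2\to\HOD\to V$ by setting $H^n=\dom(j^n)$ and proving inductively (as a single \NGBC-expressible claim in the class parameter $j$) that $H^{n+1}=\HOD^{H^n}$ and that $j\restrict H^{n+1}:H^{n+1}\to H^n$ is elementary; this construction also dissolves the uniformity worry you raise, since $\dom(j^n)$ is uniform in $n$ by definition. The gap is in your endgame. First, the critical sequence $\<\kappa_n\st n<\omega>$ is defined from $j$, which is not a class of $\HOD$, so there is no reason for $\lambda=\sup_n\kappa_n$ to have cofinality $\omega$ \emph{in} $\HOD$; the ``$\HOD$-least cofinal $\omega$-sequence into $\lambda$'' may simply fail to exist, and even $j(\lambda)=\lambda$ is not immediate when no cofinal $\omega$-sequence for $\lambda$ lives in the domain of $j$. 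Second, and more fundamentally, even if such an $s\in\HOD$ existed, the definition ``$s$ is the $\HOD$-least such sequence,'' interpreted in the domain $\HOD$, refers to $\HOD^2$ and \emph{its} canonical well-order; elementarity then identifies $j(s)$ with the $\HOD$-least such sequence as computed in $V$, which is a priori a different object from $s$. Concluding $j(s)=s$ requires knowing that definable classes with definably well-ordered transitive closures have the same extension in the domain as in the codomain --- and that is precisely the hard content of this section (theorem \ref{Theorem.A^M=A^N}), not something available at the outset.

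The paper's actual endgame is a different soft argument that your proposal is missing: call $\<x_n\st n<\omega>$ \emph{$j$-coherent} if $j(x_{n+1})=x_n$ for all $n$. Using the canonical well-orders $<^{n+1}$ of $\HOD^{n+1}$ definable in $\HOD^n$, which satisfy $j(<^{n+1})=<^n$, one shows every $j$-coherent sequence is constant: take a nonconstant one with $x_0$ of minimal rank and extract from the symmetric differences $x_n\vartriangle x_{n+1}$, via the $<^n$-least elements, a nonconstant $j$-coherent sequence of strictly lower rank. On the other hand, theorem \ref{Theorem.NojVtoV=HOD} forces $\HOD\neq V$, hence $\HOD^{n+1}\neq\HOD^n$ by elementarity, and the $<^n$-least element of $\HOD^n\setminus\HOD^{n+1}$ yields a nonconstant $j$-coherent sequence --- contradiction. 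Note that this argument genuinely uses the whole $\omega$-tower at once (the $j$-coherent sequences have length $\omega$ and the minimality argument quantifies over all of them), so your fallback of invoking only finitely many levels at a time would not suffice for it.
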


\begin{proof}
Assume to the contrary that there is such a nontrivial
elementary embedding $j:\HOD \to V$. As before, we needn't
assume \AC\ explicitly since our hypothesis implies that
\AC\ holds in $V$, since it holds in \HOD\ and $j$ is
elementary. We begin by constructing from $j$ a uniform
presentation of the classes $\HOD^n$ for $n<\omega$. In
order to do so, we would like to iterate $j$, and we remark
that for this kind of embedding, where the domain is a
proper subclass of the codomain, one does not iterate the
embedding in the usual forward direction, since it can
happen for a set $x$ that $j(x)$ is no longer in the domain
of $j$, leaving $j^2(x)$ undefined. Rather, one should
build an inverse system, iterating the embedding in the
reverse direction, with the domains becoming successively
smaller. In order to do this, define that $x$ is in the
domain of $j^n$ if it is possible to apply $j$ successively
$n$ times to $x$; that is, if there is a sequence
$\<x_0,\ldots,x_n>$ such that $x_0=x$ and $x_{n+1}=j(x_n)$.
It follows that $x\in\dom(j^{n+1})\iff j(x)\in\dom(j^n)$,
where we consider $j^0$ as the universal identity function.
Let $H$ be the class $\set{(x,n)\st x\in \dom(j^n)}$. In
short, we define a class $H$ by specifying its slices as
$H^n=\dom(j^n)$. We will show that $H$ is a uniform
presentation of the $\HOD^n=H^n$. Clearly, $H^0=V$ and
$H^1=\HOD$, and we are on our way; it remains to prove
$H^{n+1}=\HOD^{H^n}$, which we now do by induction. This
argument makes subtle but critical use of the fact, due to
lemmas \ref{Lemma.Gaifman} and
\ref{Lemma.ZFFirstOrderExpressible}, that the statements of
the following claim are each expressible by single \NGBC\
assertions in the class parameter $j$, with natural number
parameter $n$.\footnote{The subtle point is that one cannot
generally prove a {\it scheme} of statements $\varphi_i(n)$
by induction on $n$, since if the scheme is not
expressible, we are not able in \NGBC\ to form the set of
$n$ where it fails and thus may be unable to find the least
$n$ where it fails.}
\begin{subclaim} \label{Claim.UniformHODnDefinitionEtc}
For all $n \in \omega$,
\begin{enumerate}
 \item $H^{n+1}=\HOD^{H^n}$.
     \label{Statement.HODnUniformlyDefinable}
 \item $H^n$ and $H^{n+1}$ are transitive, proper class
     models of ZFC. \label{Statement.H^n+1ZFCModel}
 \item $j\restrict H^{n+1}:H^{n+1}\to H^n$ is
     elementary.\label{Statement.Elementary}
\end{enumerate}
\end{subclaim}
\begin{proof}
Note that we need statement \ref{Statement.H^n+1ZFCModel}
to hold even in order for statement
\ref{Statement.Elementary} to be first-order expressible,
since Gaifman's lemma \ref{Lemma.Gaifman} as we have stated
it applies only to transitive models of \ZF.\footnote{For
example, recent work of Gitman, Hamkins and Johnstone
\cite{GitmanHamkinsJohnstone:WhatIsTheTheoryZFC-Powerset?}
shows that Gaifman's theorem can fail for transitive proper
class models of the version of \ZFC\ without Powerset using
Replacement without Collection, although it does apply to
$\ZFC^-$ models when this theory includes Collection.} The
case $n=0$ is immediate. Assume inductively that the claim
holds for $n$, and consider $n+1$. We start with statement
\ref{Statement.HODnUniformlyDefinable}. For all $x \in
H^{n+1}$, we have
\begin{align*}
&x \in \HOD^{H^{n+1}}\\
&\iff H^{n+1} \satisfies x\in\HOD\\
&\iff H^n \satisfies j(x)\in\HOD &&\text{by inductive hypothesis \ref{Statement.Elementary} }\\
&\iff j(x) \in H^{n+1} &&\text{by inductive hypothesis \ref{Statement.HODnUniformlyDefinable}} \\
&\iff x \in H^{n+2} &&\text{by definition of $H$}.
\end{align*}
So statement \ref{Statement.HODnUniformlyDefinable} is
proven for $n+1$.

Next, we prove statement \ref{Statement.H^n+1ZFCModel}. By
the inductive hypothesis, $H^{n+1}$ is a \ZFC\ model and
therefore satisfies the sentence stating that \HOD\ is a
transitive, proper class \ZFC\ model. Since we just proved
that $H^{n+2}=\HOD^{H^{n+1}},$ it follows that $H^{n+2}$ is
a transitive, proper class \ZFC\ model.

Finally, we prove statement \ref{Statement.Elementary}.
First, note by the definition of $H$ that $j$ maps
$H^{n+2}$ into $H^{n+1}$. Since $H^{n+2}$ is the \HOD\ of
$H^{n+1}$ and $H^{n+1}$ is the \HOD\ of $H^n$, and we know
by the inductive hypothesis that $j\restrict
H^{n+1}:H^{n+1}\to H^n$ is elementary, it follows that
$j\restrict H^{n+2}:H^{n+2}\to H^{n+1}$ is elementary,
since this is the restriction of an elementary embedding to
the definable transitive class \HOD\ of the domain and its
corresponding codomain. So statement
\ref{Statement.Elementary} holds and the proof of claim
\ref{Claim.UniformHODnDefinitionEtc} is complete.
\end{proof}

We may now freely refer to $\HOD^n$ uniformly in $n$. The
claim shows moreover that we have an entire inverse system
of embeddings
$$\cdots\rTo \HOD^n\rTo\cdots\rTo\HOD^2\rTo\HOD\rTo
V,$$ where the embedding at each step is the appropriate
restriction of $j$. By composing, $j^n:\HOD^n\to V$ is
elementary.

Let $<^{n+1}$ be the canonical well-ordering of
$\HOD^{n+1}$ definable in $\HOD^n$. Let $<^0 = j(<^1)$, in
the sense of applying $j$ to a class, meaning $<^0 =
\Union_{\alpha \in \ORD }j(<^1 \intersect V_\alpha)$. Note
that our uniform presentation of the $\HOD^n$'s allows us
to define the $<^{n+1}$ uniformly in $n$. Since $j$ is
elementary from each $\HOD^n$ to $\HOD^{n-1}$, it follows
that $j(<^{n+1})=<^n$ for all $n<\omega$, again in the
sense of applying $j$ to a class.

The key concept of this proof is the definition that a
sequence $\vec x=\<x_n\st n<\omega>$ is {\df $j$-coherent},
if $j(x_{n+1})=x_n$ for all $n<\omega$. Such sequences
arise naturally in the inverse limit of the system of
embeddings above. We shall now derive a contradiction by
showing first, that every $j$-coherent sequence is
constant, and second, that there is a $j$-coherent sequence
that is not constant.

We show first that every $j$-coherent sequence is constant,
having the form $\<x,x,x,\ldots>$ for a fixed point
$x=j(x)$. To see this, suppose that $\vec x=\<x_n\st
n<\omega>$ is a nonconstant $j$-coherent sequence, where
$x_0$ has minimal possible $\in$-rank, which we denote
$\rk(x_0)$. Since $j(\rk(x_{n+1}))=\rk(x_n)$ by
elementarity, it follows that $\rk(x_{n+1})\leq\rk(x_n)$.
If $\rk(x_1)<\rk(x_0)$, then since the sequence is
$j$-coherent, we may apply the inverse of $j^n$ and see
that $\rk(x_{n+1})<\rk(x_n)$, thereby producing an infinite
descending sequence of ordinals, which is impossible. So it
must be that all the $x_n$ have the same rank. It is
similarly easy to see that $x_{n+1}\neq x_n$, since
otherwise the entire sequence would be constant, contrary
to our assumption. Let $a_n$ be the $<^n$-least element of
the symmetric difference $x_{n+1}\vartriangle x_n$, which
makes sense because these two sets are distinct elements of
$\HOD^n$. By the $j$-coherence of the well-orderings $<^n$
and of $\vec x$, it follows that $j(a_{n+1})=a_n$, and so
$\vec a=\<a_n\st n<\omega>$ is $j$-coherent. Note that
$a_1\neq a_0$, since if $a_0\in x_1\setminus x_0$ then
$a_1\in x_2\setminus x_1$ by elementarity, and otherwise
$a_0\in x_0\setminus x_1$, leading to $a_1\in x_1\setminus
x_2$, which makes $a_0=a_1$ impossible in either case.
Finally, since $a_0\in x_0\vartriangle x_1$, the rank of
$a_0$ is smaller than the rank of $x_0$ (which is equal to
the rank of $x_1$). This contradicts our assumption that
$\vec x$ was a minimal counterexample, and so we have
established that all $j$-coherent sequences are trivial.

We now obtain a contradiction by producing a nontrivial
$j$-coherent sequence. By theorem
\ref{Theorem.NojVtoV=HOD}, we know that $\HOD \ofnoteq V$,
and consequently $\HOD^{n+1}\ofnoteq \HOD^n$ by
elementarity. Let $s_n$ be the $<^n$-least element of
$\HOD^n\setminus \HOD^{n+1}$. It follows by the
$j$-coherence of the relations $<^n$ that $j(s_{n+1})=s_n$,
and so this sequence is $j$-coherent. Since $s_0\in
V\setminus \HOD$ and $s_1\in \HOD \setminus \HOD^2$, it
follows that $s_0\neq s_1$, and so this sequence is not
constant, a contradiction.\end{proof}

We shall now generalize theorem \ref{Theorem.NojHODtoV} to
other definable classes by first establishing the following
fundamental fact. A class $A$ is {\df $b$-definable} in a
transitive inner model $M$ if there is a formula $\varphi$
such that $A=\set{x\in M\st M\satisfies\varphi(x,b)}$. If
$N$ is another model containing $b$, we may relativize the
definition to obtain $A^N=\set{x\in N\st
N\satisfies\varphi(x,b)}$. The transitive closure of a
class is the smallest transitive class containing all
elements of that class.

\begin{theorem}\label{Theorem.A^M=A^N}
Do not assume \AC. Suppose that $j:M\to N$ is an elementary
embedding of inner models $M\of N$ of \ZF, where $M$ is
$b$-definable in $N$ with a parameter $b$ fixed by $j$.
Suppose that $A$ is a $b$-definable class (or set) in $N$
and that the transitive closure of $A$ has a $b$-definable
well ordering in $N$; or equivalently, $A$ is a $b$-definable class in $N$, and $A \of \HOD[b]^N$. Then $A^M=A^N$.
\end{theorem}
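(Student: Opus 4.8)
The plan is to reduce the claim to a statement about an inverse system of relativized copies of $M$ and then to apply the theory of $j$-coherent sequences developed in the proof of theorem \ref{Theorem.NojHODtoV}. First I would record the basic consequence of elementarity: writing $\varphi(x,b)$ for the formula defining $A$ in $N$, the hypothesis $j(b)=b$ gives, for every $x\in M$, the equivalence $x\in A^M\iff j(x)\in A^N$. In particular $A^M$ and $A^N$ agree below the critical point of $j$, so any witness to $A^M\neq A^N$ must be moved by $j$. The whole difficulty is that $j$ need not fix the relevant elements outright; indeed $j$ moves ordinals, so one cannot hope to prove that $j$ fixes all of $\tcl(A)$, and a more global argument is needed.

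Next I would build the inverse system $\cdots\of N_2\of N_1\of N_0$, where $N_0=N$, $N_1=M$, and $N_{k+1}$ is obtained by relativizing the $b$-definition $\theta(x,b)$ of $M\of N$ inside $N_k$, exactly as the classes $\HOD^n$ are built in theorem \ref{Theorem.NojHODtoV}. Using $j(b)=b$ one checks, uniformly in $k$, that $N_{k+1}$ is the $\theta$-class of $N_k$, that each $N_k$ is a transitive proper class model of \ZF, and that $j$ restricts to an elementary embedding $j:N_{k+1}\to N_k$; the $b$-definable well-order $W$ of $\tcl(A)$ furnished by $A\of\HOD[b]^N$ relativizes to a $j$-coherent family of $b$-definable well-orders $W^{N_k}$ of $\tcl(A)^{N_k}$. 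Here lies the main obstacle, which is one of expressibility rather than combinatorics: as the footnote to theorem \ref{Theorem.NojHODtoV} warns, one cannot prove a genuine scheme by induction on $k$, so I must invoke lemmas \ref{Lemma.Gaifman} and \ref{Lemma.ZFFirstOrderExpressible} to see that each clause of this construction (that $N_{k+1}=\theta^{N_k}$, that $N_k\satisfies\ZF$, and that $j\restrict N_{k+1}$ is elementary) is captured by a single \NGBC\ assertion in the parameters $b$ and $k$, so that the induction on $k$ is legitimate.

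Finally, assuming toward a contradiction that $A^M\neq A^N$, for each $k$ let $\gamma_k$ be the $W^{N_k}$-least element of the symmetric difference $A^{N_k}\vartriangle A^{N_{k+1}}$. Because this description is $b$-definable uniformly in $k$ and $j:N_{k+1}\to N_k$ is elementary with $j(b)=b$, applying $j$ to the defining property of $\gamma_{k+1}$ yields $j(\gamma_{k+1})=\gamma_k$, and elementarity propagates the nonemptiness of the symmetric difference down the tower, so each $\gamma_k$ genuinely exists. Thus $\vec\gamma=\<\gamma_k\st k<\omega>$ is a $j$-coherent sequence whose entries, together with their $\in$-members, all lie in the transitive domain $\tcl(A)$ carrying the $j$-coherent well-orders $W^{N_k}$, so the argument of theorem \ref{Theorem.NojHODtoV} showing that every $j$-coherent sequence is constant applies verbatim and gives $\gamma_k=\gamma$ with $j(\gamma)=\gamma$. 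But then $\gamma\in A^{N_1}\iff j(\gamma)\in A^{N_0}\iff\gamma\in A^{N_0}$, so $\gamma\notin A^{N_0}\vartriangle A^{N_1}=A^M\vartriangle A^N$, contradicting the choice of $\gamma_0$. Hence $A^M=A^N$, as desired.
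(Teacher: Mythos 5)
Your proposal follows the paper's own strategy quite closely---the inverse system of relativized copies of $M$, the uniform-presentation issue resolved via lemmas \ref{Lemma.Gaifman} and \ref{Lemma.ZFFirstOrderExpressible}, and the reduction to the constancy of $j$-coherent sequences---but the final step has a genuine gap. You define $\gamma_k$ to be the $W^{N_k}$-least element of the symmetric difference $A^{N_k}\vartriangle A^{N_{k+1}}$. The well-order $W^{N_k}$, however, is only a well-order of $\tcl(A^{N_k})$, and the half $A^{N_{k+1}}\setminus A^{N_k}$ of that symmetric difference need not be contained in $\tcl(A^{N_k})$: the relativized class $A^{N_{k+1}}$ is a subclass of $N_{k+1}$, not of $\tcl(A^{N_k})$, and nothing in the hypotheses places it there (nor does the $\HOD[b]$ formulation help, since $\HOD[b]^{N_{k+1}}$ need not be contained in $\HOD[b]^{N_k}$). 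So your $\gamma_k$ may fail to be well-defined; and even where it can be selected, the constancy argument requires $\gamma_0\in\tcl(A^{N_0})$ (and each $\gamma_k\in\tcl(A^{N_k})$), which fails exactly when $\gamma_0$ comes from the $A^{N_1}\setminus A^{N_0}$ side. For the same reason the constancy argument of theorem \ref{Theorem.NojHODtoV} does not apply ``verbatim'': there the canonical well-order covers all of $\HOD^{n+1}$, whereas here it covers only $\tcl(A^{M^n})$.

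The repair is the two-case analysis the paper carries out in both halves of the argument. If $A^{M^0}\setminus A^{M^1}$ is nonempty, take $s_n$ to be the $<^n$-least element of $A^{M^n}\setminus A^{M^{n+1}}$, which does lie in $\tcl(A^{M^n})$. Otherwise $A^{M^1}\setminus A^{M^0}$ is nonempty; take $s_{n+1}$ to be the $<^{n+1}$-least element of $A^{M^{n+1}}\setminus A^{M^n}$---legitimate because this set lies in $\tcl(A^{M^{n+1}})$---and then define $s_0:=j(s_1)$ rather than selecting it directly; elementarity gives $s_0\in A^{M^0}\of\tcl(A^{M^0})$, so the constancy claim can be invoked. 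The analogous split (on whether $x_0\setminus x_1$ or $x_1\setminus x_0$ is nonempty, with $a_0:=j(a_1)$ in the second case) is also needed inside the minimal-rank proof that $j$-coherent sequences rooted in $\tcl(A^{M^0})$ are constant. With these adjustments your argument becomes the paper's proof.
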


\begin{proof}
First, we note that given $A$ is $b$-definable in $N$, the inclusion $A \of \HOD[b]^N$ is equivalent to the existence of a $b$-definable well-ordering of $\tcl(A)$ in $N$. In one direction, if $A \of \HOD[b]^N$, then the canonical $HOD[b]^N$ ordering orders $\tcl(A)$. Conversely, if $\tcl(A)$ has a $b$-definable well-ordering, then since $A$ is $b$-definable, it follows for each ordinal $\alpha$ that the $\alpha$-th element of $A$ under this well-ordering is definable from $b$ and $\alpha$ in $N$, and so $A \of \HOD[b]^N$.

The theorem is formalized as an \NGB\ theorem scheme, asserting of
each pair of formulas $\psi$ and $\varphi$ that if they
define $M$ and $A$ respectively in $N$ in the way described, then the
conclusion holds. To begin the proof, suppose that $j:M\to
N$ is as described in the hypothesis of the theorem. Let
$M^0=N$ and $M^n = \dom(j^n)$, as defined in the proof of
theorem \ref{Theorem.NojHODtoV}, so that we have a uniform
presentation of these classes. In this notation, the
original embedding is $j:M^1\to M^0$. Note that because
$b=j(b)$, it follows that $b\in M^n$ for every $n$.

\begin{subclaim}\label{Claim.uniformMDef}
For all $n \in \omega$,
\begin{enumerate}
 \item $M^{n+1}$ is the $M$ of $M^n$, using the same
     definition of $M$ as in $N$, with the same
     parameter $b$. \label{Hypothesis.M}
 \item $M^n$ and $M^{n+1}$ are transitive, proper class
     models of \ZF.\label{Hypothesis.MnZFModel}
 \item $j\restrict M^{n+1}:M^{n+1}\to M^n$ is
     elementary.\label{Hypothesis.ElementaryM}
\end{enumerate}
\end{subclaim}
\noindent The proof is by induction on $n$, and follows the
proof of claim \ref{Claim.UniformHODnDefinitionEtc} by
substituting $M$ in place of $\HOD$ and using the fact that
$j(b) = b$. Thus, the definable classes $M^n$ essentially
form a $j$-coherent sequence, and we omit the details.
%
%
The claim leads to the inverse system
$$\cdots\rTo M^n\rTo\cdots\rTo M^2\rTo M^1 \rTo
M^0,$$ where the embedding at each step is the appropriate
restriction of $j$, and the final step is the full original
embedding $j:M\to N$.

Let us denote by $<^0$ the hypothesized $b$-definable
well-order of the transitive closure $\tcl(A)$ in $N$. For
each natural number $n$, let $<^n$ be the corresponding
well-ordering of $\tcl(A^{M^n})$ defined in $M^n$ using the
same formula and parameter $b$. The fact that this
definition is indeed a well-order of $\tcl(A^{M^n})$ in
$M^n$ follows by the elementarity of $j$ and the fact that
$j(b)=b$. Note that the $<^{n}$ can be uniformly presented
with respect to $n$, using our uniform presentation of the
classes $M^n$. Furthermore, since the $<^n$ are all defined
in $M^n$ by the same formula and the parameter is fixed by
$j$, it follows by elementarity that $j(<^{n+1})=<^n$, in
the sense of applying $j$ to a class.

Using the $j$-coherent concept of theorem
\ref{Theorem.NojHODtoV}, we show that every $j$-coherent
sequence $\vec{x} = \< x_n \st n \in \omega >$ with $x_0\in
\tcl(A^{M^0})$ is constant. If not, then let $\vec{x}$ be a
nonconstant $j$-coherent sequence with $x_0\in
\tcl(A^{M^0})$, such that $x_0$ has minimal rank among all
such sequences. By elementarity, it follows that $x_n \in
\tcl(A^{M^n})$ for every $n<\omega$. Since $\vec x$ is not
constant, we must have $x_0\neq x_1$. Suppose, for a first
case, that $x_0\setminus x_1$ is nonempty. It follows by
elementarity that $x_n\setminus x_{n+1}$ is nonempty for
all $n$, and we may let $a_n$ be the $<^n$-least element of
$x_n\setminus x_{n+1}$. Note that $a_n \in \tcl(A^{M^n})$.
By the $j$-coherence of $<^n$ and $\vec x$, it follows that
$j(a_{n+1})=a_n$, and so $\vec a=\<a_n\st n<\omega>$ is
$j$-coherent. Note that $\vec a$ is not constant, since
$a_0\in x_0\setminus x_1$ and $a_1\in x_1\setminus x_2$,
and $a_0\in x_0$ implies that $a_0$ is in $\tcl(A^{M^0})$
with strictly lower rank than $x_0$. The existence of $\vec
a$ therefore contradicts our minimality assumption on $\vec
x$ and $x_0$. For the remaining case, $x_1\setminus x_0$ is
nonempty. By elementarity, $x_{n+1}\setminus x_n$ is
nonempty for every $n$, and we may let $a_{n+1}$ be the
$<^{n+1}$-least element of $x_{n+1}\setminus x_n$ and also
define $a_0=j(a_1)$. By the uniformity of the definition,
it again follows that $j(a_{n+1})=a_n$, and so $\vec
a=\<a_n\st n<\omega>$ is $j$-coherent. It is not constant
since $a_1\in x_1\setminus x_0$, whilst $a_2\in
x_2\setminus x_1$, and from $a_1\in x_1$ we deduce that
$a_0\in x_0\in \tcl(A^{M^0})$ and so $a_0$ is in
$\tcl(A^{M^0})$ with strictly lower rank than $x_0$, again
contradicting our minimality assumption.

Finally, under the assumption that $A^M \neq A^N$, we
construct a nonconstant $j$-coherent sequence, $\vec{s} =
\<s_n \st n<\omega>$, with $s_0 \in \tcl(A^{M^0})$, thereby
contradicting the fact just established that all such
sequences are constant. If it happens that
$A^{M^0}\setminus A^{M^1}$ is nonempty, then let $s_0$ be
the $<^0$-least element of this difference class. Since
this is definable in $M^0$ from parameter $b=j(b)$, it
extends naturally to a $j$-coherent sequence $\vec
s=\<s_n\st n<\omega>$, where $s_n$ is the $<^n$-least
element of $A^{M^n}\setminus A^{M^{n+1}}$. This sequence is
not constant because $s_0\in A^{M^0}\setminus A^{M^1}$ but
$s_1\in A^{M^1}\setminus A^{M^2}$, and it has $s_0\in
A^{M^0}$, as desired. In the remaining case,
$A^{M^1}\setminus A^{M^0}$ is nonempty. Let $s_{n+1}$ be
the $<^{n+1}$-least element of $A^{M^{n+1}}\setminus
A^{M^n}$, and let $s_0=j(s_1)$. By the uniformity of these
definitions, it follows that $j(s_{n+1})=s_n$, and so $\vec
s=\<s_n\st n<\omega>$ is $j$-coherent. But it is not
constant, because $s_1\in A^{M^1} \setminus A^{M^0}$ whilst
$s_2\in A^{M^2}\setminus A^{M^1}$, and since $s_1\in
A^{M^1}$, we have $s_0\in A^{M_0} \of \tcl(A^{M^0})$. So
once again, we have contradicted the fact established in
the previous paragraph, and the proof is complete.
\end{proof}

\begin{corollary}\label{Corollary.SameCardCofEtc.}
If $j:M\to N$ is an elementary embedding of inner models
$M\of N$ of \ZF, where $M$ is $b$-definable in $N$ using a
parameter $b$ fixed by $j$, then $M$ and $N$ have
\begin{enumerate}
  \item the same cardinals and the same cofinality
      function,
  \item the same continuum function,
  \item the same $\Diamond^*_\kappa$ pattern and
  \item the same large cardinals of any particular
      kind.
  \item Furthermore, $\HOD^M=\HOD^N$ and
      $\gHOD^M=\gHOD^N$ and more.
\end{enumerate}
\end{corollary}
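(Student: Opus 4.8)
The plan is to read off every item as an instance of theorem \ref{Theorem.A^M=A^N}. In each case I would exhibit a $b$-definable class $A$ in $N$---in fact each will be $\emptyset$-definable---whose relativizations $A^M$ and $A^N$ encode the structural feature in question, and I would verify the well-ordering hypothesis of that theorem. The key unifying observation is that every class $A$ I use is a subclass of $\HOD^N$: for items (1)--(4) these are classes of ordinals or of finite tuples of ordinals, which lie in $\HOD$ since ordinals and their tuples are hereditarily ordinal-definable, and for item (5) the classes are definable inner models already known to sit inside $\HOD$. Since $\HOD^N\of\HOD[b]^N$, the alternative form of the hypothesis in theorem \ref{Theorem.A^M=A^N} is met, and so the theorem delivers $A^M=A^N$ with no appeal to \AC. (For the classes of ordinals one may see the well-ordering even more directly: $\tcl(A)$ is then itself a class of ordinals, well-ordered by $\in$.)

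For item (1) I would take $A=\set{\kappa\st\kappa\text{ is a cardinal}}$, so that $A^M$ and $A^N$ are the cardinals of $M$ and of $N$; the theorem then gives agreement on cardinals, and running the same argument with $A$ the graph $\set{(\alpha,\beta)\st\beta=\cof(\alpha)}$ gives agreement on the cofinality function. Item (2) is identical with $A$ the graph of the continuum function. Item (3) uses the $\emptyset$-definable class $A=\set{\kappa\st\Diamond^*_\kappa}$ of ordinals. Item (4) is really a theorem scheme: for each first-order expressible large cardinal property $\Phi$, applying the theorem to $A=\set{\kappa\st\Phi(\kappa)}$ shows that $M$ and $N$ have exactly the same $\Phi$-cardinals.

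For item (5) I would apply the theorem to $A=\HOD^N$, which is $\emptyset$-definable and satisfies $\HOD^N\of\HOD[b]^N$, yielding $\HOD^M=\HOD^N$. The identical argument applies to any $\emptyset$-definable inner model contained in $\HOD$: since $\gHOD\of\HOD$ we obtain $\gHOD^M=\gHOD^N$, and likewise $L^M=L^N$, agreement on the iterated classes $\HOD^\eta$ and $\gHOD^\eta$ wherever they exist, and so on---this is the intended ``and more.'' I expect the only real obstacle to be bookkeeping rather than mathematics: one must check in each case that the chosen formula is genuinely absolute in the intended sense, so that the relativization $A^M$ is the object computed inside $M$ rather than the ambient object of $N$ cut down to $M$, and that the well-ordering hypothesis is verified for $\tcl(A)$ and not merely for $A$. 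Once these routine points are confirmed, each conclusion is an immediate corollary of theorem \ref{Theorem.A^M=A^N}.
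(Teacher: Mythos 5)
Your proposal is correct and follows essentially the same route as the paper: each item is obtained by applying theorem \ref{Theorem.A^M=A^N} to a definable class of ordinals (or tuples of ordinals, or a definable inner model) and checking the well-ordering hypothesis. The only cosmetic difference is that you verify that hypothesis via the equivalent formulation $A\of\HOD[b]^N$, whereas the paper exhibits the definable well-order on $\tcl(A)$ directly (e.g.\ noting that $\tcl(A)=\ORD^N$ for the class of cardinals); the theorem itself states these are equivalent, so this is the same argument.
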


\begin{proof}
As in theorem \ref{Theorem.A^M=A^N}, we do not need to
assume \AC. The corollary follows immediately from the
theorem, because in each case we have a definable class
having a definable well-order on its transitive closure.
For example, if $A$ is the class of cardinals in $N$, then
this is definable in $N$ and the transitive closure is the
class $\ORD^N$, which certainly has a definable well-order
in $N$. So by the theorem, $A^M=A^N$, and so $M$ and $N$
have the same cardinals. Similarly, we can let $A$ be the
graph of the cofinality function, or the graph of the
continuum function $\gamma\mapsto 2^\gamma$, or the class
of cardinals $\kappa$ for which $\Diamond^*_\kappa$, or the
class of measurable cardinals, or the class of supercompact
cardinals or what have you. In each case, the class is
definable and has a definable well-order on the transitive
closure, and so the theorem implies that the class has the
same extension in $M$ as in $N$. Similarly, the case of
$\HOD$ and $\gHOD$ are definable transitive classes with a
definable well-order, and so by the theorem have the same
extension in $M$ and $N$. The proof method clearly applies
to many other definable classes.
\end{proof}

\begin{corollary}\label{Corollary.Noj:DefinableMofHOD^NtoN}
Suppose that $M\ofnoteq N$ are inner models of \ZF, and that
$M$ is definable in $N$ and $M\of \HOD^N$. Then there is no
nontrivial elementary embedding $j:M\to N$.
\end{corollary}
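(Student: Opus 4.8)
The plan is to apply theorem \ref{Theorem.A^M=A^N} with the class $A$ taken to be $M$ itself. Suppose toward a contradiction that $j:M\to N$ is a nontrivial elementary embedding, where $M\ofnoteq N$ are inner models of \ZF, the class $M$ is definable in $N$, and $M\of\HOD^N$. Fix a formula $\varphi$ defining $M$ in $N$; we take this definition to be parameter-free, so that setting $b=\emptyset$ (which $j$ fixes, since $j(\emptyset)=\emptyset$, and hence $b\in M$) we have $x\in M\iff N\satisfies\varphi(x,b)$; more generally any defining parameter would have to be among those fixed by $j$. Since $M$ is transitive, $\tcl(M)=M\of\HOD^N\of\HOD[b]^N$, so $A=M$ is a $b$-definable class in $N$ with $A\of\HOD[b]^N$, and thus the hypotheses of theorem \ref{Theorem.A^M=A^N} are satisfied.

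First I would invoke theorem \ref{Theorem.A^M=A^N} to conclude that $M^M=M^N$, where $M^N=\set{x\in N\st N\satisfies\varphi(x,b)}$ and $M^M=\set{x\in M\st M\satisfies\varphi(x,b)}$, as in that theorem. By the choice of $\varphi$ and $b$ we have $M^N=M$, and so the theorem yields $M^M=M$. This last equality says exactly that every element of $M$ satisfies the defining formula as computed inside $M$, that is, $M\satisfies\forall x\,\varphi(x,b)$.

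Next I would push this single first-order sentence across the embedding. Since $b=j(b)$ and $M\satisfies\forall x\,\varphi(x,b)$, the elementarity of $j$ gives $N\satisfies\forall x\,\varphi(x,b)$. But this means that every element of $N$ lies in $M^N=M$, so $N\of M$ and therefore $M=N$, contradicting the hypothesis $M\ofnoteq N$. Note that the argument in fact rules out any elementary $j:M\to N$, not merely the nontrivial ones.

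The argument is soft once theorem \ref{Theorem.A^M=A^N} is in hand, so I do not expect a genuine obstacle here; the points needing care are bookkeeping. One must ensure the defining parameter $b$ is fixed by $j$---automatic in the parameter-free case with $b=\emptyset$---so that the sentence $\forall x\,\varphi(x,b)$ transports correctly under $j$, and one must observe that the hypothesis $M\of\HOD^N$ is precisely what supplies the $b$-definable well-order of $\tcl(M)=M$ required to apply theorem \ref{Theorem.A^M=A^N}. All of the real work, via the inverse system of the iterates $M^n$ and the analysis of $j$-coherent sequences, has already been carried out in that theorem.
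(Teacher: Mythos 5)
Your argument is correct, and it takes a slightly different route from the paper's, although both rest on the same engine, theorem \ref{Theorem.A^M=A^N}. The paper applies that theorem with $A=\HOD$ (via corollary \ref{Corollary.SameCardCofEtc.}) to get $\HOD^M=\HOD^N$, then observes $M\of\HOD^N=\HOD^M\of M$, so $M=\HOD^M$, transfers the sentence ``$V=\HOD$'' through $j$ to get $N=\HOD^N$, and concludes $M=\HOD^M=\HOD^N=N$. You instead apply the theorem with $A=M$ itself: since $M$ is transitive and $M\of\HOD^N\of\HOD[b]^N$, the hypotheses are met, and $A^M=A^N=M$ says precisely that $M\satisfies\forall x\,\varphi(x,b)$, which by elementarity (using $j(b)=b$) yields $N\satisfies\forall x\,\varphi(x,b)$ and hence $N\of M^N=M$, so $M=N$. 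Your version is somewhat more direct---it transfers a single universal sentence rather than routing through the identity $\HOD^M=\HOD^N$ and the absoluteness of ``$V=\HOD$''---while the paper's version has the side benefit of isolating the reusable fact that $M$ and $N$ compute $\HOD$ identically, which it exploits elsewhere. Your bookkeeping is also right: the definability hypothesis must be understood as definability from parameters fixed by $j$ (the paper's proof needs exactly the same reading, since corollary \ref{Corollary.SameCardCofEtc.} carries that hypothesis), and your remark that nontriviality is automatic under $M\ofnoteq N$ is consistent with the paper's convention that a trivial embedding requires $M=N$.
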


\begin{proof}
If there were such a $j:M\to N$, then by corollary \ref{Corollary.SameCardCofEtc.}, it follows that $\HOD^M=\HOD^N$. In
this case, it follows that $M\of\HOD^N=\HOD^M$ and so
$M=\HOD^M$, and consequently $N=\HOD^N$ and so $M=N$,
contrary to assumption.
\end{proof}

It follows immediately that there is no nontrivial
elementary embedding $j:\HOD^2\to\HOD$, if these models are
different, and indeed, there is no $j:\HOD^n\to\HOD^m$ for
any $m<n<\omega$, if the models are different. More
generally, if $\HOD^\eta$ exists and is definable in
$\HOD^\xi$ for some $\xi<\eta$, and
$\HOD^\eta\neq\HOD^\xi$, then there is no nontrivial
$j:\HOD^\eta\to\HOD^\xi$. Similarly, it follows immediately
from corollary \ref{Corollary.Noj:DefinableMofHOD^NtoN}
that there is no nontrivial $j:\gHOD^2\to \gHOD$ and no
nontrivial $j:\gHOD^n\to\gHOD^m$ for any $m<n<\omega$,
provided these models differ, and indeed, if $\gHOD^\eta$
exists and is definable in $\gHOD^\xi$ for some $\xi<\eta$,
then there is no nontrivial $j:\gHOD^\eta\to\gHOD^\xi$,
provided the models differ. In each case, we would have
definable $M\of\HOD^N$, and so corollary
\ref{Corollary.Noj:DefinableMofHOD^NtoN} rules out such
$j$.

Theorem \ref{Theorem.A^M=A^N} can be
applied to transitive {\it set} models $M$ and $N$, where
$M\of N$ is definable, they have the same ordinals and
there is a cofinal elementary embedding $j:M\to N$, and this situation
allows for several simplifications. In this case, one can
dispense with many of the metamathematical concerns about
uniform presentations, since one can perform the induction
in the ambient set theoretic background, where $j$ would
now be a set. One still seems to need that $M$ and $N$ are
well-founded in order to pick $x_0$ of minimal rank,
although this could also be possible even when the models
are ill-founded, as long as the $M^n$ are uniformly
presented in $N$ and $j$ is amenable to $N$, although such
a situation would be similar simply to applying the current
theorem inside $N$.

We also briefly note that the conclusion of theorem
\ref{Theorem.A^M=A^N} applies even in the case that the
class is defined using an ordinal parameter that is not
necessarily fixed by $j$.

\begin{corollary}\label{Corollary.B^M = B^N}
Suppose that $j:M\to N$ is an elementary embedding of inner
models $M\of N$ of \ZF, where $M$ is $b$-definable in $N$
with a parameter $b$ fixed by $j$. Suppose that
$B\of\HOD[b]^N$ is $(b,\beta)$-definable in $N$ for some
ordinal $\beta$. Then $B^M=B^N$.
\end{corollary}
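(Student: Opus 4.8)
The plan is to reduce this corollary to Theorem \ref{Theorem.A^M=A^N} by absorbing the ordinal parameter $\beta$ into a class parameter that is already covered by that theorem. The obstruction is that while $b$ is fixed by $j$, the ordinal $\beta$ need not be, so we cannot feed $B$ directly into Theorem \ref{Theorem.A^M=A^N}. Instead I would package the entire $\beta$-indexed family of slices into a single $b$-definable class, apply the theorem to that class, and then read off the $\beta$-th slice.

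First, write $B=\set{x\in N\st N\satisfies\varphi(x,b,\beta)}$ for the defining formula $\varphi$, and for each ordinal $\gamma$ let $B_\gamma=\set{x\st\varphi(x,b,\gamma)}$, so that $B=B_\beta$. The key auxiliary object is the class of \emph{good indices} $G=\set{\gamma\in\ORD\st B_\gamma\of\HOD[b]}$. Since $\HOD[b]^N$ is itself $b$-definable in $N$, the class $G$ is $b$-definable, and being a class of ordinals it satisfies $G\of\ORD^N\of\HOD[b]^N$. Thus Theorem \ref{Theorem.A^M=A^N} applies and gives $G^M=G^N$. By hypothesis $B=B_\beta\of\HOD[b]^N$, so $\beta\in G^N=G^M$; this is precisely the step that transfers the \emph{goodness} of $\beta$ from $N$ down to $M$ despite $\beta$ not lying in the range of $j$, and it yields $B_\beta^M\of\HOD[b]^M$.

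Next I would define the class $A=\set{(x,\gamma)\st\varphi(x,b,\gamma)\wedge\gamma\in G}$, keeping only those pairs whose slice lies in $\HOD[b]$. This class is again $b$-definable, and any such pair consists of an element $x\in B_\gamma\of\HOD[b]^N$ together with an ordinal $\gamma\in\HOD[b]^N$; since $\HOD[b]^N$ is a model of \ZF\ and hence closed under pairing, the ordered pair $(x,\gamma)$ lies in $\HOD[b]^N$, so $A\of\HOD[b]^N$. Theorem \ref{Theorem.A^M=A^N} then gives $A^M=A^N$. Finally, reading off the $\beta$-slice: because $\beta\in G^N$ the slice $\set{x\st (x,\beta)\in A^N}$ is exactly $B^N$, and because $\beta\in G^M$ the slice $\set{x\st (x,\beta)\in A^M}$ is exactly $B^M$; the equality $A^M=A^N$ of the full classes forces equality of these two $\beta$-slices, delivering $B^M=B^N$.

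The one delicate point, which I expect to be the main obstacle, is ensuring that $\beta$ remains a good index in $M$, since a naive slicing of $A^M$ at $\beta$ would be empty unless $B_\beta^M\of\HOD[b]^M$. This is resolved entirely by the auxiliary class $G$: applying Theorem \ref{Theorem.A^M=A^N} to $G$ guarantees $G^M=G^N$, so no new combinatorial argument or metamathematical bookkeeping about the inverse system is needed beyond two applications of the already-established theorem.
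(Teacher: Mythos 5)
Your proof is correct and follows essentially the same route as the paper: absorb the ordinal parameter $\beta$ by packaging the whole family of slices into a single $b$-definable class contained in $\HOD[b]^N$, apply theorem \ref{Theorem.A^M=A^N} to it, and then read off the $\beta$-th slice. The paper does this in one step, taking $A=\set{\<x,\alpha>\in\HOD[b]\st \varphi(x,b,\alpha)}$ and asserting that $B$ is the $\beta$-th slice of $A$; as you note, in $M$ that slice is a priori only $B^M\intersect\HOD[b]^M$, so concluding $B^M=B^N$ needs the additional fact that $B^M\of\HOD[b]^M$. Your auxiliary class $G$ of good indices, to which theorem \ref{Theorem.A^M=A^N} applies since $G\of\ORD\of\HOD[b]^N$, supplies exactly this fact via $G^M=G^N$ and $\beta\in G^N$. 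So your argument is the paper's argument with that implicit step made explicit, at the modest cost of a second application of the theorem.
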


\begin{proof}
Again, we do not need \AC\ here. Since $b$ and $\beta$ are
in both $M$ and $N$, it makes sense to consider $B^M$,
defined using the same definition as in $N$ and the same
parameters, so that $x\in B\iff\varphi(x,b,\beta)$ in
either model. In $N$, define the class
$A=\set{\<x,\alpha>\in\HOD[b]\st \varphi(x,b,\alpha)}$. This
class is $b$-definable in $N$ and contained in $\HOD[b]^N$. Thus, by theorem
\ref{Theorem.A^M=A^N}, it follows that $A^M=A^N$. But the
class $B$ is simply the $\beta^\th$ slice of $A$, and so it
follows that $B^M=B^N$.
\end{proof}

The previous corollaries will now allow us to apply the
stationary partition argument in a range of additional
situations. For example, we find the following consequence
striking, and it implies many of the other results we have
discussed.

\begin{theorem}\label{Theorem.Noj:definableMtoV}
If $M$ is a definable transitive class in $V$, then there
is no nontrivial elementary embedding $j:M\to V$.
\end{theorem}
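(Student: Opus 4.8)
The plan is to avoid any new stationary-set combinatorics and instead reduce the problem to two results already established in this section: the easy Kunen inconsistency in the case $V=\HOD$ (theorem \ref{Theorem.NojVtoV=HOD}) and the nonexistence of embeddings $\HOD\to V$ (corollary \ref{Corollary.Noj:DefinableMofHOD^NtoN}). Suppose toward a contradiction that $j:M\to V$ is a nontrivial elementary embedding, where $M$ is a definable transitive class (for simplicity parameter-free, or $b$-definable with $b$ fixed by $j$; a parameter lying in the range of $j$ is handled as in the earlier theorems of this section). Since $M\of V$ and $M$ is definable in $V$, lemma \ref{Lemma.CriticalPoint}(3) supplies a critical point $\kappa$, and I note that one need not assume \AC\ here.

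The key observation is that $M$ and $V$ have the same $\HOD$. Indeed, $M\of V$ are inner models of \ZF\ with $M$ definable in $V$ by a parameter fixed by $j$, so corollary \ref{Corollary.SameCardCofEtc.}(5) gives $\HOD^M=\HOD^V=\HOD$. In particular $\HOD=\HOD^M\of M$, so $\HOD$ sits between $M$ and $V$. Moreover, restricting the elementary embedding to this common, parameter-free definable inner model yields an elementary embedding $j\restrict\HOD:\HOD^M\to\HOD^V$, that is, $j\restrict\HOD:\HOD\to\HOD$, whose range lies in $\HOD$; it is nontrivial because its critical point is still $\kappa$, an ordinal lying in $\HOD$ and moved by $j$.

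It then remains to derive a contradiction from $j\restrict\HOD$, and I would split into two cases according to whether $\HOD=V$. If $\HOD\ofnoteq V$, then $j\restrict\HOD$ is a nontrivial elementary embedding from $\HOD$ into $V$ (its range merely happening to lie in $\HOD\of V$), and since $\HOD$ is a definable inner model of \ZF\ with $\HOD\of\HOD^V$, this directly contradicts corollary \ref{Corollary.Noj:DefinableMofHOD^NtoN} applied with the pair $(\HOD,V)$. If instead $\HOD=V$, then $\HOD^M=V$ forces $V=\HOD^M\of M\of V$, whence $M=V$ and $j:V\to V$ is a nontrivial elementary embedding; since $V=\HOD$, this is ruled out by theorem \ref{Theorem.NojVtoV=HOD} (which conveniently needs no appeal to \AC). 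Either way we reach a contradiction.

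I expect the main subtlety to lie in the second step, namely verifying that the hypotheses of corollary \ref{Corollary.SameCardCofEtc.} genuinely apply and, in particular, that the defining parameter of $M$ may be taken fixed by $j$; everything else is a soft manipulation of restrictions of $j$ together with a clean case split. The one genuinely load-bearing input is the equality $\HOD^M=\HOD^V$, which is exactly what the $j$-coherence analysis behind theorem \ref{Theorem.A^M=A^N} was designed to deliver. Once that is in hand, the reduction to the previously settled cases is immediate, and this is what makes the result follow without re-running the stationary partition machinery.
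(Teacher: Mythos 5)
Your reduction breaks down in the case $\HOD\neq V$, and the failure is not repairable with the tools you cite. What the restriction argument actually gives you is that $j\restrict\HOD^M$ is an elementary embedding from $\HOD^M$ to $\HOD^V$, i.e.\ (granting $\HOD^M=\HOD^V$ from corollary \ref{Corollary.SameCardCofEtc.}) a nontrivial elementary embedding $\HOD\to\HOD$. This is \emph{not} an elementary embedding $\HOD\to V$ in the sense required by corollary \ref{Corollary.Noj:DefinableMofHOD^NtoN}: elementarity there means $\HOD\satisfies\varphi(x)\iff V\satisfies\varphi(j(x))$, with truth computed in $V$ on the target side, whereas your restricted map only satisfies $\HOD\satisfies\varphi(x)\iff\HOD\satisfies\varphi(j(x))$. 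The fact that the range happens to lie inside $\HOD\of V$ does not upgrade the one to the other (that would essentially require $\HOD\prec V$). So the corollary does not apply, and what you are left with is a nontrivial elementary embedding $j:\HOD\to\HOD$ that is merely an \NGBC\ class, not definable from parameters --- and whether such an embedding can exist is precisely open question \ref{Question.jHODtoHOD?} of this paper (theorem \ref{Theorem.NoDefinblej:HODtoHOD} rules out only the \emph{definable} ones). Your argument therefore reduces the theorem to an open problem rather than to a contradiction. The second case ($\HOD=V$ forces $M=V$, then theorem \ref{Theorem.NojVtoV=HOD} applies) is fine, as is the extraction of a critical point and the use of $\HOD^M=\HOD^V$.

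The paper's route is different and cannot dispense with the stationary machinery: it derives the theorem as the case $N=V$ of theorem \ref{Theorem.Noj:definableMtoN}, using corollary \ref{Corollary.SameCardCofEtc.} only to guarantee that $M$ and $V$ agree on cardinals and cofinalities (so that a fixed point $\lambda$ with $j(\lambda^\plus)=\lambda^\plus$ can be found and $\Cof_\omega\lambda^\plus$ is unambiguous), and then running the Ulam--Solovay stationary partition argument with the club of closure points of $j$. If you want a soft, partition-free proof, you would first need to close the gap above by showing outright that there is no nontrivial (not necessarily definable) $j:\HOD\to\HOD$, which is not currently known.
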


\noindent This theorem should be understood as an \NGBC\
theorem scheme, asserting of each possible parameter-free definition of
such an $M$, that no \NGBC\ class is such an embedding $j$.
The theorem has theorem \ref{Theorem.NojHODtoV} as a
special case, asserting that there is no $j:\HOD\to V$,
simply because $\HOD$ is definable in $V$. But it
generalizes to show that there is no $j:\HOD^n\to V$ for
any natural number $n$, no $j:\gHOD\to V$ and no
$j:\gHOD^n\to V$, as all these classes are definable.
Theorem \ref{Theorem.Noj:definableMtoV} is itself an
immediate consequence of the following more general result,
simply by taking the case $N=V$.

\begin{theorem}\label{Theorem.Noj:definableMtoN}
Without assuming \AC\ in $V$, if $j:M\to N$ is a nontrivial
elementary embedding of inner models $M\of N$ of \ZFC\ and
$N$ is eventually stationary correct to $V$, then $M$ is
not definable in $N$ from parameters fixed by $j$.
\end{theorem}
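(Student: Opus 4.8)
The plan is to argue by contradiction, assuming that $M$ is $b$-definable in $N$ for some parameter $b$ with $j(b)=b$, and then to run the stationary partition argument of theorem \ref{Theorem.NojMtoNifStationaryCorrect} using only the stationary correctness of the \emph{codomain} $N$. Since $b=j(b)$ lies in the range of $j$ and $M\of N$, lemma \ref{Lemma.CriticalPoint}(3) supplies a critical point $\kappa$ for $j$. The decisive structural input is corollary \ref{Corollary.SameCardCofEtc.}, which applies precisely because $M$ is $b$-definable in $N$ with $b$ fixed by $j$: it tells us that $M$ and $N$ have exactly the same cardinals and the same cofinality function. Fixing $\theta>\kappa$ above which $N$ is stationary correct to $V$, and noting that stationary correctness forces $N$ and $V$ to agree on cofinalities and cardinals above $\theta$, we conclude that all three models $M$, $N$ and $V$ compute cardinals and cofinalities identically above $\theta$. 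None of this requires \AC\ in $V$, since $M,N\satisfies\ZFC$ and both the cited lemma and corollary avoid \AC.

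Next I would locate a fixed point $\lambda>\theta$ with $j(\lambda)=\lambda$ and $\cof(\lambda)=\omega$ in all three models. Letting $C=\set{\beta\in\ORD\st j\image\beta\of\beta}$, a club class definable from $j$ in $V$, I pick $\eta\in C$ with $\cof(\eta)>\theta$; then $(\Cof_\omega\eta)^N$ is stationary in $N$, hence in $V$ by the stationary correctness of $N$, so it meets $C\intersect\eta$ at some $\lambda>\theta$. Since $\cof(\lambda)^N=\omega$ and $M$, $N$ agree on cofinalities, we have $\cof(\lambda)^M=\omega$ as well, and then applying $j$ to a cofinal $\omega$-sequence from $M$ together with $j\image\lambda\of\lambda$ yields $j(\lambda)=\lambda$. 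Because cardinals agree above $\theta$, the values $\lambda^\plus$ and $\lambda^\plusplus$ are common to $M$, $N$ and $V$ and are fixed by $j$, and $\Cof_{\lambda^\plus}\lambda^\plusplus$ is absolute among the three models.

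Finally I would derive the contradiction exactly as in theorem \ref{Theorem.NojMtoNifStationaryCorrect}, but reading off stationarity from $N$ rather than from $M$. Partition $\Cof_{\lambda^\plus}\lambda^\plusplus=\bigsqcup_{\alpha<\kappa}S_\alpha$ into stationary sets in $M$, set $\vec T=j(\vec S)$ and $S^*=T_\kappa$. By elementarity each piece of $\vec T$ is stationary in $N$, so $S^*$ is stationary in $N$ and hence, since $N$ is stationary correct at $\lambda^\plusplus$, stationary in $V$. Thus there is $\beta\in S^*\intersect C$; as $\beta\in\Cof_{\lambda^\plus}\lambda^\plusplus$ and $j\image\beta\of\beta$ with $j(\lambda^\plus)=\lambda^\plus$, the usual computation gives $j(\beta)=\beta$. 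Since $\beta$ has cofinality $\lambda^\plus$ in $M$ as well, it lies in some $S_\alpha$ with $\alpha<\kappa$, and then $\beta=j(\beta)\in j(S_\alpha)=T_{j(\alpha)}=T_\alpha$ because $\alpha<\kappa$ is fixed by $j$; but $\beta\in S^*=T_\kappa$ and $\alpha\neq\kappa$, contradicting the disjointness of the pieces of $\vec T$.

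The step I expect to be the main obstacle is the relocation of every appeal to stationary correctness onto the codomain $N$. In the original proof of theorem \ref{Theorem.NojMtoNifStationaryCorrect} one uses the correctness of the \emph{domain} $M$ both to produce $\lambda$ and to secure the relevant cofinality computations, whereas here $M$ is not assumed stationary correct at all. What rescues the argument is that the definability of $M$ in $N$ from a fixed parameter, through corollary \ref{Corollary.SameCardCofEtc.}, forces $M$ and $N$ to agree on cardinals and cofinalities; this agreement is exactly what lets me transfer $\cof(\lambda)^N=\omega$ to $M$ and keep $\lambda^\plus,\lambda^\plusplus$ common to all three models, so that the single surviving use of correctness---the stationarity of $S^*$ in $V$---can be supplied by $N$ alone.
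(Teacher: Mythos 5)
Your proposal is correct and follows essentially the same route as the paper's proof: both obtain the critical point from lemma \ref{Lemma.CriticalPoint}, invoke corollary \ref{Corollary.SameCardCofEtc.} to transfer cardinals and cofinalities from $N$ to $M$, use the stationary correctness of $N$ alone to push stationarity up to $V$, and then run the stationary partition argument on a club of closure points of $j$. The only cosmetic difference is that you partition $\Cof_{\lambda^\plus}\lambda^\plusplus$ into $\kappa$ many pieces as in theorem \ref{Theorem.NojMtoNifStationaryCorrect}, whereas the paper partitions $\Cof_\omega\lambda^\plus$ into $\lambda^\plus$ many; both variants close the argument for the same reasons.
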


\begin{proof}This theorem can be formalized as an \NGB\
scheme, asserting of every formula $\varphi$ that it is not
a definition of $M$ in $N$ by parameters fixed by $j$,
supposing that $j$, $M$ and $N$ are as described. Suppose
that $j:M\to N$ is a nontrivial elementary embedding of
inner models $M\of N$ of \ZFC, where $N$ is stationary
correct to $V$ above $\theta$, and suppose towards
contradiction that $M$ is $b$-definable in $N$ for some
parameter $b=j(b)$. Let $\kappa$ be the critical point of
$j$, which exists by lemma \ref{Lemma.CriticalPoint}, and
let $C=\set{\beta \st j \image \beta \of \beta}$ be the
class of ordinals closed under $j$, which is a closed
unbounded class of ordinals. Fix any $\eta\in C$ of
cofinality greater than $\kappa$ and $\theta$, and observe
that $C \intersect\eta$ is club in $\eta$. Let $S =
(\Cof_\omega \eta)^M$, which by corollary
\ref{Corollary.SameCardCofEtc.} is the same as
$(\Cof_\omega \eta)^N$, which is stationary in $N$ and
hence also in $V$. Thus, we may find $\lambda \in S
\intersect C$ above $\kappa$ and $\theta$. Since
$\cof(\lambda) = \omega$ in $M$ and $j \image \lambda \of
\lambda$, it follows that $j(\lambda) = \lambda$. Since $M$
and $N$ have the same cardinals by corollary
\ref{Corollary.SameCardCofEtc.}, and $N$ and $V$ have the
same cardinals above $\theta$ by the discussion about
stationary correctness in section
\ref{Section.StationaryCorrectNew}, it follows that
$(\lambda^+)^M=(\lambda^+)^N=(\lambda^+)^V$, a cardinal we
unambiguously denote $\lambda^+$, and so we conclude
$j(\lambda^+)=\lambda^+$.

Applying the Ulam-Solovay partition theorem in $M$, there is a
partition $\vec{S} = \< S_\alpha \st \alpha < \lambda^+
>\in M$ of ${(\Cof_\omega \lambda^+)}^M = {(\Cof_\omega
\lambda^+)}^N$ into disjoint sets stationary in $M$. Let
$\vec{T} = j(\vec{S})$. By elementarity and stationary
correctness, $T_\kappa$ is stationary in $V$. Since $C\cap
\lambda^+$ is club, there exists $\beta \in T_\kappa \cap
C$, and $\beta$ has cofinality $\omega$ in $N$ and hence in
$M$. It follows that $\beta \in S_\alpha$ for some $\alpha
< \lambda^+$, and that $j(\beta) = \beta$. Therefore,
$\beta \in T_{j(\alpha)} \neq T_\kappa$, contradicting the
disjointness of $\vec{T}$.
\end{proof}

As a quick example, we may immediately deduce the following
corollary, an extension of theorem
\ref{Theorem.Noj:definableMtoV} to the case of {\df
generic} embeddings, those that exist as classes in a
forcing extension. The corollary follows from theorem
\ref{Theorem.Noj:definableMtoN} with the observation that
$V$ is eventually stationary correct in all of its
set-forcing extensions.

\begin{corollary} \label{Corollary.NoGenericj:DefinableMtoV}
If $M$ is a definable class in $V$, then in no set-forcing
extension $V[G]$ is there a nontrivial elementary embedding $j:M\to V$.
\end{corollary}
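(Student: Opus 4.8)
The plan is to reduce the corollary to Theorem~\ref{Theorem.Noj:definableMtoN}, exactly as the one-sentence hint in the excerpt suggests. Suppose toward a contradiction that $M$ is a definable class in $V$ and that in some set-forcing extension $V[G]$ there is a nontrivial elementary embedding $j:M\to V$, which is a class of $V[G]$. The key structural observation is that, from the point of view of the ambient model $V[G]$, we have a nontrivial elementary embedding whose codomain $V$ is an inner model of $V[G]$, and whose domain $M$ sits inside $V$, hence inside $V[G]$. So I would work in $V[G]$ and cast the situation as an instance of the hypotheses of Theorem~\ref{Theorem.Noj:definableMtoN}, playing the roles $M\mapsto M$, $N\mapsto V$, and the ambient universe $\mapsto V[G]$.

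First I would verify the three hypotheses of Theorem~\ref{Theorem.Noj:definableMtoN} in this reinterpreted setting. We need $M\of N$ with both inner models of \ZFC: here $M\of V$ and both satisfy \ZFC, which persists into the $V[G]$ context. We need $N$ (namely $V$) to be eventually stationary correct to the ambient universe (namely $V[G]$): this is precisely the remark recorded in the corollary's preamble, that after any set forcing the ground model is stationary correct in the extension at all regular cardinals above the size of the forcing, as established in Section~\ref{Section.StationaryCorrectNew}. Finally, we need $M$ to be definable in $N=V$ from parameters fixed by $j$: since $M$ is a definable class in $V$, it is definable there without parameters, so this condition holds with the empty parameter, which is trivially fixed by $j$. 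With these checks in place, Theorem~\ref{Theorem.Noj:definableMtoN} applied inside $V[G]$ directly yields that there can be no such nontrivial elementary embedding $j:M\to V$, giving the contradiction.

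One metamathematical point deserves care, and I expect it to be the only real subtlety: the theorem is a scheme, quantifying over definitions, and the embedding $j$ now lives in a forcing extension rather than in $V$ itself. I would invoke the remark from Section~\ref{Section.Preliminaries} that the theory of forcing goes through fine in the \NGBC\ context, so that $V[G]$ is again an \NGBC\ model in which $V$ appears as a definable inner model and $j$ appears as a genuine class; the elementarity of $j$ is expressed via Gaifman's Lemma~\ref{Lemma.Gaifman} as before. Since the definition of $M$ is the fixed parameter-free formula $\varphi$ defining $M$ in $V$, and $V$ is itself definable as the ground model in $V[G]$ for the relevant forcing, the composite definition of $M$ inside $V[G]$ is a fixed formula, and we simply apply the corresponding instance of the scheme of Theorem~\ref{Theorem.Noj:definableMtoN}. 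No new combinatorics is required; the entire content is the transfer of the earlier theorem into the forcing extension, which is exactly why this result comes as a quick corollary.
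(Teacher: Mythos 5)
Your proposal is correct and follows essentially the same route as the paper, which likewise derives the corollary by applying theorem \ref{Theorem.Noj:definableMtoN} inside $V[G]$ with $N=V$, using the fact that $V$ is eventually stationary correct in all of its set-forcing extensions. Your additional care in checking the hypotheses (that a parameter-free definition of $M$ is trivially by parameters fixed by $j$, and that the scheme transfers to the \NGBC\ forcing extension) is consistent with, and merely elaborates on, the paper's one-line argument.
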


For example, in any set forcing extension $V[G]$, there is
no nontrivial generic elementary embedding $j:\HOD\to V$,
no nontrivial generic embedding $j:\gHOD\to V$ and no
nontrivial generic embedding from the Mantle or from the
generic Mantle to $V$.

In connection with theorem \ref{Theorem.Noj:definableMtoV},
let us mention the very interesting work of Vickers and
Welch \cite[p. 1100]{VickersWelch2001:jMtoV}, who proved
that if $\ORD$ is Ramsey, then there is a nontrivial
elementary embedding $j: M \to V$, where $M$ is a
transitive inner model of \ZFC, and $j$ is definable from a
proper class that exists as the result of the large
cardinal assumption. Vickers and Welch note (p. 1090) that
$j$ cannot be definable in the usual sense, and our theorem
\ref{Theorem.Noj:definableMtoV} shows moreover that even
the class $M$ is not definable. On page 1101 of the same
paper, they reproduce a proof due to Foreman showing that
if $M$ is any inner model closed under $\omega$ sequences
of ordinals, then (assuming AC) there is no $j: M\to V$.
One very interesting part of this proof is the use of an
ultrapower construction to obtain a regular fixed point
above the critical point.

Let us deduce one final corollary of theorem
\ref{Theorem.A^M=A^N}, concerning the situation when \AC\
fails. As we shall mention later in questions
\ref{Question.nojVtoVwithoutAC?} and
\ref{Question.jHODtoHOD?}, it is not known whether one can
prove the Kunen inconsistency without the axiom of choice,
that is, whether there can be a nontrivial elementary
embedding $j:V\to V$ in the $\neg\AC$ context, nor whether
there can be a nontrivial elementary embedding
$j:\HOD\to\HOD$. Of course, the two questions are related,
because if there is $j:V\to V$ in a $\neg\AC$ context, then
the restriction of this embedding produces a nontrivial
elementary embedding $\HOD\to\HOD$. The next corollary
improves on the observation by showing that in order to
produce a nontrivial $j:\HOD\to\HOD$, it suffices to have a
nontrivial elementary embedding $j:M\to V$ in the $\neg\AC$
context from a definable $M$ to $V$.

\begin{corollary} \label{Corollary.jMtoVimpliesjHODtoHOD}
Do not assume \AC. If $j:M\to V$ is a nontrivial elementary
embedding from a transitive proper class $M$ that is
definable in $V$ from parameters fixed by $j$, then there
is a nontrivial elementary embedding from $\HOD$ to $\HOD$.
\end{corollary}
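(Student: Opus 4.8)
The plan is to observe that the hypotheses place us squarely within the scope of corollary \ref{Corollary.SameCardCofEtc.}, which already does the heavy lifting, and then simply to restrict $j$ to the $\HOD$ of its domain. Write $M$ as $b$-definable in $V$ using a parameter $b=j(b)$ fixed by $j$, so that $j:M\to V$ is an elementary embedding of the inner model $M\of V$ into $V$, with $M$ being $b$-definable in the codomain by a parameter in the range of $j$. First I would invoke lemma \ref{Lemma.CriticalPoint}, in its third case, to produce the critical point $\kappa=\cp(j)$, the least ordinal with $\kappa<j(\kappa)$.

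Next, since $\HOD$ is a parameter-free definable transitive class carrying its canonical definable well-order on all of $\ORD$—so that no appeal to \AC\ is required—corollary \ref{Corollary.SameCardCofEtc.} applies with $N=V$ and yields $\HOD^M=\HOD^V=\HOD$. This is the crux: the domain and the codomain of $j$ compute the very same $\HOD$.

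Finally I would argue that the restriction $j\restrict\HOD^M:\HOD^M\to\HOD^V$ is a nontrivial elementary embedding. Elementarity follows because $\HOD$ is defined by a single parameter-free formula $\theta$: for any $x\in M$ we have $x\in\HOD^M\iff M\satisfies\theta(x)\iff V\satisfies\theta(j(x))\iff j(x)\in\HOD^V$ by elementarity of $j$, so $j$ does map $\HOD^M$ into $\HOD^V$; and relativizing an arbitrary formula $\varphi$ to the definable class $\HOD$ in both models, the elementarity of $j$ transfers $M\satisfies\varphi^{\HOD}(\vec x)\iff V\satisfies\varphi^{\HOD}(j(\vec x))$, which is exactly the elementarity of the restriction. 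Since both $\HOD^M$ and $\HOD^V$ are transitive proper class models of \ZF\ containing all the ordinals, the remarks following lemma \ref{Lemma.Gaifman} guarantee that this $\Sigma_1$-elementary restriction is automatically cofinal and hence an elementary embedding in the sense of our conventions. For nontriviality, note that $\kappa$ is an ordinal and therefore lies in $\HOD^M=\HOD$, while $j(\kappa)>\kappa$; thus $j\restrict\HOD$ moves $\kappa$ and so is not the identity. Composing these observations, $j\restrict\HOD:\HOD\to\HOD$ is the desired nontrivial elementary embedding.

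The only genuinely delicate point is the elementarity of the restriction to a definable inner class, but this is routine relativization together with the automatic cofinality of $\Sigma_1$-elementary maps between proper class models with all ordinals; all the substantive work—namely that the two sides agree on $\HOD$—has already been carried out in corollary \ref{Corollary.SameCardCofEtc.}.
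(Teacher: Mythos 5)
Your proposal is correct and follows essentially the same route as the paper: obtain $\HOD^M=\HOD^V$ from theorem \ref{Theorem.A^M=A^N} (you cite its consequence, statement (5) of corollary \ref{Corollary.SameCardCofEtc.}, which is the same fact), restrict $j$ to $\HOD$, and use lemma \ref{Lemma.CriticalPoint} to see the restriction moves the critical point. Your added detail on the elementarity and cofinality of the restriction is sound but not a different argument.
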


\begin{proof}By theorem \ref{Theorem.A^M=A^N}, it
follows from the assumption that $\HOD^M=\HOD$, and so
$j\restrict\HOD:\HOD\to\HOD$ is the desired embedding. Note
that lemma \ref{Lemma.CriticalPoint} shows that $j$ must
have a critical point, and so this restriction is
nontrivial.
\end{proof}

Although we do not know whether or not there can be
nontrivial $j:\HOD\to\HOD$, we merely note that corollary
\ref{Corollary.jMtoVimpliesjHODtoHOD} may be a way to
produce them.

\section{The case where $j$ is definable} \label{Section.jDefinable}

In this section, we consider the Kunen inconsistency and
its generalizations in the special case where the embedding
$j$ is not merely a class in \NGBC\ set theory, but a
first-order definable class. In this special case, many of
the results admit of particularly soft proofs, which we
shall presently describe, relying neither on any deep
combinatorial facts nor on the axiom of choice. These
results can be formalized as \ZF\ theorem schemes. Since
these soft proofs make essential use of the definability of
$j$, however, they do not seem to generalize to the
corresponding full results concerning embeddings that are
\NGBC\ classes and not necessarily definable from
parameters. In part for this reason, as we mentioned in
section \ref{Section.Preliminaries}, we find that the
\NGBC\ interpretation of the Kunen inconsistency seems to
provide it a more robust content than the \ZFC\ scheme
interpretation concerning only definable embeddings.
Nevertheless, because the soft proofs here do not use \AC,
we seem unable to deduce them directly from the prior
results, which do use \AC.

When we say that an embedding $j:M\to N$ on transitive
proper class models $M$ and $N$ of \ZF\ is definable in $V$
using parameter $p$, what we mean is that there has been
already provided a particular first-order formula
$\varphi(x,y,z)$, such that $j(x)=y$ if and only if
$\varphi(x,y,p)$ holds in $V$. That is, the relation
$\varphi(\cdot,\cdot,p)$ defines the graph of $j$. In
particular, the domain $M=\set{x\st \exists
y\,\varphi(x,y,p)}$ and the codomain $N=\Union\set{y\st
\exists x\,\varphi(x,y,p)}$ are also definable classes. (We
know that $N=\Union\set{y\st\exists x\,\varphi(x,y,p)}$
because $j$ is cofinal, and each element of $N$ is an
element of some $V_\alpha^N$.)

Conversely, we point out now that for a fixed first-order
formula $\varphi$, the question of whether a parameter $p$
succeeds in defining such an elementary embedding $j:M\to
N$ via $\varphi(\cdot,\cdot,p)$ is expressible as a
first-order property of $p$. To begin, it is easy to
express whether $\varphi(\cdot,\cdot,p)$ defines a
functional relation of its first two arguments. The
question of whether the domain $M$ and codomain $N$ of the
function are transitive proper class models of \ZF\ is
expressible by the method of lemma
\ref{Lemma.ZFFirstOrderExpressible}, and the question of
whether the function is elementary is expressible by the
method of lemma \ref{Lemma.Gaifman}. Similarly, when
$\varphi(\cdot,\cdot,p)$ does define an elementary
embedding, the question of whether this embedding is
nontrivial is easily expressible, as is the question of
whether the embedding has critical point $\kappa$. Putting
all this together, for a given formula $\varphi$ the
question whether a parameter $p$ succeeds in defining via
$\varphi(\cdot,\cdot,p)$ a nontrivial elementary embedding
$j:V\to V$ is a first-order expressible property of $p$.
Similarly, for a given formula $\varphi$, the collection of
ordinals $\kappa$ which arise as the critical point of a
nontrivial elementary embedding $j:V\to V$ defined by
$\varphi(\cdot,\cdot,p)$ for some parameter $p$ is a
definable class of ordinals.

These observations are all that are required now to prove
the Kunen inconsistency for embeddings that are definable
from parameters. This result, which had been part of the
folklore in some pockets of the set-theoretic community,
was published by Suzuki
\cite{Suzuki1999:NoDefinablejVtoVinZF}. The essence of the
proof is the classical observation that the concept of
being a Reinhardt cardinal, if consistent, cannot be first
order expressible, since if $\kappa$ is the least Reinhardt
cardinal, witnessed by $j:V\to V$, then by elementarity
$j(\kappa)$ would also be the least Reinhardt cardinal,
contrary to $\kappa<j(\kappa)$. Indeed, for the same
reason, there can be no consistent first-order property
$\varphi(\kappa)$ implying that $\kappa$ is Reinhardt.

\begin{theorem}[\cite{Suzuki1999:NoDefinablejVtoVinZF}]
Assume only \ZF. There is no nontrivial elementary
embedding $j:V\to V$ that is definable from
parameters.\label{Theorem.NoDefinablej:VtoVinZF}
\end{theorem}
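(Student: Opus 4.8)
The plan is to exploit the classical observation, already emphasized in the discussion preceding the theorem, that being the critical point of a definable nontrivial elementary embedding $j:V\to V$ cannot be a consistent parameter-free first-order property of an ordinal. I would formalize the result as a \ZF\ theorem scheme, one instance for each formula $\varphi$, and for each fixed $\varphi$ suppose toward contradiction that some parameter $p$ succeeds in defining via $\varphi(\cdot,\cdot,p)$ a nontrivial elementary embedding $j:V\to V$.

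The first step is to invoke the expressibility observations established just before the theorem: using lemmas \ref{Lemma.ZFFirstOrderExpressible} and \ref{Lemma.Gaifman}, for the fixed formula $\varphi$ the assertion that a parameter $p'$ defines via $\varphi(\cdot,\cdot,p')$ a nontrivial elementary embedding $V\to V$ with critical point $\kappa$ is first-order expressible. Existentially quantifying over $p'$ then yields a parameter-free definable class $R_\varphi$ of ordinals, namely those $\kappa$ that arise as the critical point of some such embedding. By lemma \ref{Lemma.CriticalPoint}, applied via its condition (3) since $V$ is trivially definable in itself without parameters, the assumed embedding $j$ has a critical point $\kappa$, and this $\kappa$ witnesses that $R_\varphi$ is nonempty.

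The decisive step is a least-counterexample argument. As a nonempty class of ordinals, $R_\varphi$ has a least element $\kappa_0$, and by the definition of $R_\varphi$ there is a parameter $p_0$ for which $\varphi(\cdot,\cdot,p_0)$ defines a nontrivial elementary embedding $j_0:V\to V$ with $\cp(j_0)=\kappa_0$. Now the assertion ``$x$ is the least element of $R_\varphi$'' is a parameter-free first-order property of $x$, so by the elementarity of $j_0$ the ordinal $j_0(\kappa_0)$ satisfies it as well; since the least element is unique, this forces $j_0(\kappa_0)=\kappa_0$. But $\kappa_0=\cp(j_0)$ gives $\kappa_0<j_0(\kappa_0)$, a contradiction, which completes this instance of the scheme.

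I expect the main obstacle to be purely metamathematical rather than combinatorial: one must genuinely verify that $R_\varphi$ is parameter-free first-order definable, that is, that ``$\varphi(\cdot,\cdot,p')$ defines a nontrivial cofinal $\Delta_0$-elementary $j:V\to V$ with critical point $\kappa$'' can be rendered as a single \ZF\ formula in $\kappa$ and $p'$. This is precisely what lemmas \ref{Lemma.ZFFirstOrderExpressible} and \ref{Lemma.Gaifman}, together with the remarks before the theorem, secure, and it is crucial here that $\varphi$ be fixed in advance, so that elementarity is captured by a single assertion rather than by the inexpressible negation of a scheme. Notably, no appeal to infinitary combinatorics nor to the axiom of choice enters the argument.
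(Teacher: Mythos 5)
Your proposal is correct and follows essentially the same route as the paper: fix the formula $\varphi$, use the expressibility observations to make the class of critical points arising from some parameter a parameter-free definable class, take its least element, and derive the contradiction $j(\kappa)=\kappa$ versus $\kappa<j(\kappa)$ by elementarity. The only cosmetic difference is that you route the existence of the critical point through clause (3) of lemma \ref{Lemma.CriticalPoint} rather than clause (2), which is equally valid here.
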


\begin{proof} This is a theorem scheme, asserting of each
formula $\varphi$ that there is no parameter $p$ for which
$\varphi(\cdot,\cdot,p)$ defines an elementary embedding
$j:V\to V$. Suppose that for some parameter $p$, the
relation $\varphi(\cdot,\cdot,p)$ defines a nontrivial
elementary embedding $j:V\to V$. We may choose $p$ so that
the critical point $\kappa$ of this embedding is as small
as possible, among all parameters $w$ for which
$\varphi(\cdot,\cdot,w)$ defines a nontrivial elementary
embedding, since as we explained before the theorem, these
notions are first order expressible. In particular,
$\kappa$ is definable in $V$. Since $j:V\to V$ is
elementary, $j(\kappa)$ satisfies the same definition,
contradicting the fact that $\kappa<j(\kappa)$.
\end{proof}

The proof of theorem \ref{Theorem.NoDefinablej:VtoVinZF}
worked by observing that if $j:V\to V$ is definable in $V$,
even with parameters, then the concept of being Reinhardt
with respect to that definition for some parameter is first
order expressible. If it were consistent, then the least
such cardinal $\kappa$ would be definable, contrary to
$\kappa<j(\kappa)$. The argument therefore follows exactly
the pattern mentioned just before theorem
\ref{Theorem.NoDefinablej:VtoVinZF} of ruling out any
consistent first-order property implying the Reinhardt
property.

This method, of obtaining the smallest possible critical
point by quantifying over all possible choices of
parameter, thereby defining the cardinal without any
parameters, applies in many other contexts. We shall now
generalize the theorem by considering the possibility of
definable embeddings added by forcing.

\begin{theorem} \label{Theorem.DefinablejGeneralized}
Do not assume AC. If $M\of V[G]$ is a transitive inner
model of a set-forcing extension $V[G]$, then there is no
nontrivial elementary embedding $j:M\to V$, with a critical
point, that is definable in $V[G]$ from parameters.
\end{theorem}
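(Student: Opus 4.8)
The plan is to adapt the minimization technique of theorem \ref{Theorem.NoDefinablej:VtoVinZF}, but to push the definition of the least critical point down into the ground model $V$ by means of the forcing relation, where ``the codomain'' is literally $V$. As this is a theorem scheme, I fix the formula $\varphi$ and suppose toward a contradiction that for some set forcing $\P$, some $V$-generic $G\of\P$, some transitive inner model $M\of V[G]$, and some parameter $p\in V[G]$, the relation $\varphi(\cdot,\cdot,p)$ defines in $V[G]$ a nontrivial elementary embedding $j:M\to V$ with a critical point $\kappa$. Since $j$ is cofinal by our conventions, its codomain is exactly the ground model, $V=\Union\set{y\st\exists x\,\varphi(x,y,p)}$. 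It is worth noting at the outset why the naive repetition of Suzuki's argument inside $V[G]$ fails: without \AC\ we have no parameter-free definition of $V$ inside $V[G]$, so the critical point, though definable from $p$, is not parameter-free in $V[G]$ and hence not directly vulnerable to elementarity. Routing the minimization through $V$ repairs this.

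The key expressibility point I record first is that the assertion ``$\varphi(\cdot,\cdot,\dot w)$ defines a nontrivial elementary embedding of a transitive inner model \emph{onto the ground model} with critical point $\check\alpha$'' is a single sentence of the forcing language over $V$. That the defined relation is a nontrivial elementary embedding of a transitive proper class model of \ZF\ is expressible by the method of lemmas \ref{Lemma.Gaifman} and \ref{Lemma.ZFFirstOrderExpressible}, exactly as in the preliminaries; that its codomain is the ground model is the forced assertion $\forall y\,[\,y\in\Union\set{v\st\exists x\,\varphi(x,v,\dot w)}\iff y\in\check V\,]$, where $y\in\check V$ abbreviates $\exists z\,(y=\check z)$; and ``critical point $\check\alpha$'' is routine. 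Working in $V$, let $\theta'(\alpha)$ be the parameter-free formula asserting that there exist a set forcing $\mathbb{Q}$, a condition $q\in\mathbb{Q}$, and a $\mathbb{Q}$-name $\dot w$ such that $q$ forces that sentence with $\check\alpha$. Our assumed $j$ witnesses, via $\mathbb{Q}=\P$ and a condition in $G$ forcing $\check\kappa$, that $\theta'(\kappa)$ holds, so the class of ordinals satisfying $\theta'$ is nonempty; let $\kappa^*$ be its least element. Then $\kappa^*\leq\kappa$ and $\kappa^*$ is definable in $V$ without parameters, by $\theta(\alpha)\equiv\theta'(\alpha)\wedge\forall\beta<\alpha\,\neg\theta'(\beta)$.

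Next I would realize $\kappa^*$ by an actual embedding. Fixing witnesses $\mathbb{Q},q,\dot w$ for $\theta'(\kappa^*)$ and passing to a $V$-generic extension $V[H]$ with $q\in H$, the relation $\varphi(\cdot,\cdot,\dot w_H)$ defines in $V[H]$ a nontrivial elementary embedding $j^*:M^*\to V$ onto the ground model with $\cp(j^*)=\kappa^*$. Because $\theta$ is a pure set-theoretic formula and $V$ is a fixed class, the statement ``$\kappa^*$ is the unique ordinal with $V\satisfies\theta$'' continues to hold in $V[H]$. Now apply the full elementarity of $j^*:M^*\to V$: from $V\satisfies\exists!\,\alpha\,\theta(\alpha)$ we obtain a unique $\bar\kappa\in M^*$ with $M^*\satisfies\theta(\bar\kappa)$, and then $V\satisfies\theta(j^*(\bar\kappa))$, whence $j^*(\bar\kappa)=\kappa^*$ by uniqueness. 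Thus $\kappa^*\in\ran(j^*)$. But a critical point is never in the range of its embedding: since $j^*$ is order preserving on ordinals with $j^*(\gamma)=\gamma$ for $\gamma<\kappa^*$ and $j^*(\gamma)>\kappa^*$ for $\gamma\geq\kappa^*$, no ordinal maps to $\kappa^*$. This contradiction completes the argument, and since it flows from the mere existence of any such $j$ (witnessing nonemptiness of $\theta'$), it rules out the embedding of the theorem.

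The main obstacle I anticipate is the expressibility step, namely certifying that ``nontrivial elementary embedding of a transitive inner model onto the ground model with critical point $\check\alpha$'' really is a single first-order assertion of the forcing language; this rests entirely on the single-assertion reformulations of \ZF-satisfaction and of elementarity furnished by lemmas \ref{Lemma.ZFFirstOrderExpressible} and \ref{Lemma.Gaifman}, together with the observation that ``$y$ belongs to the ground model'' is a forceable property. A secondary point demanding care, and the reason the whole argument is reflected into $V$ rather than carried out in $V[G]$, is exactly the absence of \AC: it is the passage to $V$, where the codomain of the realized $j^*$ is literally $V$, that makes the minimized ordinal $\kappa^*$ genuinely parameter-free and therefore exposed to elementarity.
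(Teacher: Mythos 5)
Your proposal is correct and follows essentially the same route as the paper's proof: minimize the critical point over all forcing notions, conditions, and names by quantifying in $V$ over the forcing relation, observe that the resulting ordinal is parameter-free definable in $V$, and derive a contradiction from the elementarity of a realizing embedding, since a definable-in-the-codomain ordinal must lie in the range of $j$ while the critical point cannot. Your version merely makes explicit the passage to a fresh generic extension realizing the minimal critical point, which the paper compresses into a ``without loss of generality'' step.
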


\begin{proof}
Let $\Q$ be a forcing notion, $G \subseteq \Q$ generic.
Suppose there is such a $j:M\to V$ defined in $V[G]$ by
the formula $\varphi(\cdot,\cdot,b)$, using parameter $b\in
V[G]$, having critical point $\kappa$. Thus, there is some
 $\Q$-name $\dot b$ and some condition condition $q\in\Q$ which forces that
$\varphi(\cdot,\cdot,\dot b)$ defines a nontrivial
elementary embedding from a transitive inner model to
$\check V$ with critical point $\check\kappa$. Since this
property is first-order expressible, we may assume without
loss of generality that $\kappa$ is the smallest ordinal
for which there is a forcing notion $\Q$ with a condition
$q\in\Q$ and $\Q$-name $\dot b$ for which $q$ forces this
fact about $\check\kappa$. Thus, $\kappa$ is definable in
$V$ without parameters. Since $j:M\to V$ is elementary,
this implies that $\kappa$ must be in the range of $j$,
contrary to $\kappa$ being the critical point.
\end{proof}

The definable-embedding analogues of theorems
\ref{Theorem.NojVtoV}, \ref{Theorem.Noj:V[G]toV} and
\ref{Theorem.Noj:VtoV[G]} follow as immediate corollaries.
Statement \ref{Item.Suzuki's most general definable proof}
of corollary \ref{Corollary.DefinableGenericEmbeddings}
below is the most general result mentioned by Suzuki in
\cite[p. 1594]{Suzuki1999:NoDefinablejVtoVinZF} and was also noted
briefly by Vickers and Welch \cite[p.
1090]{VickersWelch2001:jMtoV}. Theorem
\ref{Theorem.DefinablejGeneralized} can be viewed as a
generic embedding analogue of it.

\begin{corollary}\label{Corollary.DefinableGenericEmbeddings}
Do not assume AC.
\begin{enumerate}
 \item (Suzuki) \label{Item.Suzuki's most general definable
     proof} For any transitive class $M$, there is no
     nontrivial elementary embedding $j:M\to V$, with a
     critical point, that is definable with parameters
     in $V$.
 \item There is no nontrivial elementary embedding
     $j:V\to V$ that is definable with parameters in a
     set-forcing extension $V[G]$.
 \item There is no nontrivial elementary embedding
     $j:V[G]\to V$ that is definable from parameters in
     such a forcing extension $V[G]$.
 \item There is no elementary embedding $j:V\to V[G]$,
     with a critical point, that is definable from
     parameters in such $V[G]$.
\end{enumerate}
\end{corollary}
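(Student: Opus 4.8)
The plan is to read all four statements off theorem \ref{Theorem.DefinablejGeneralized} by choosing the domain $M$ and the ground-model/extension roles appropriately, supplying a critical point from lemma \ref{Lemma.CriticalPoint} in the cases where one is not already assumed. Statement (1) is simply the instance of trivial forcing: taking $V[G]=V$, theorem \ref{Theorem.DefinablejGeneralized} asserts exactly that no transitive inner model $M\of V$ carries a nontrivial elementary embedding $j:M\to V$, with a critical point, definable with parameters in $V$. This is Suzuki's result, and I would dispatch it first as the base case.

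For statement (2), an embedding $j:V\to V$ definable with parameters in $V[G]$ is the instance $M=V$ of theorem \ref{Theorem.DefinablejGeneralized}, since $V\of V[G]$ is a transitive inner model and the target is the ground model $V$. Here a critical point is automatic from clause (2) of lemma \ref{Lemma.CriticalPoint}, as the range $N=V$ is contained in the domain $M=V$, which is why the statement need not assume one. Statement (3) is the instance $M=V[G]$, since $V[G]$ is a transitive inner model of itself and the target is again the ground model $V$; the critical point is once more supplied by clause (2) of lemma \ref{Lemma.CriticalPoint}, now because $N=V\of M=V[G]$ and the two models have the same ordinals.

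The one statement that does not fall out directly is (4), and this is the main point to address: there the target of $j:V\to V[G]$ is the \emph{extension} rather than the ground model, so theorem \ref{Theorem.DefinablejGeneralized}, whose conclusion always concerns embeddings into the ground model, does not apply as stated. The resolution is to internalize statement (1). Since statement (1) is a \ZF\ theorem scheme, it holds in every model of \ZF, in particular in $V[G]$, where it asserts that no transitive class of $V[G]$ carries a nontrivial elementary embedding into $V[G]$, with a critical point, definable with parameters in $V[G]$. Taking the transitive class $V$ of $V[G]$ then yields precisely statement (4). Note that the critical point must be assumed outright, just as the statement does, since lemma \ref{Lemma.CriticalPoint} offers no help: we are not assuming \AC, the range $V[G]$ is not contained in the domain $V$, and although $V$ is definable in $V[G]$ it need not be definable there without parameters or from parameters in the range of $j$.

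Underlying all four cases is the single mechanism from the proof of theorem \ref{Theorem.DefinablejGeneralized}: by minimizing over parameters, and for the generic versions over forcing notions and names, one arranges that the critical point $\kappa$ is definable without parameters in the \emph{target} model, whereupon elementarity forces $\kappa$ into the range of $j$, contradicting its being the critical point. I expect the only genuine subtlety to be the bookkeeping for (4), namely recognizing that its target being the extension is exactly what makes the internalization of statement (1), rather than a direct appeal to theorem \ref{Theorem.DefinablejGeneralized}, the correct move; the remaining steps are immediate substitutions.
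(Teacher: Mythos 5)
Your proposal is correct and follows essentially the same route as the paper: statements (1)--(3) are read off as instances of theorem \ref{Theorem.DefinablejGeneralized} (with (1) the trivial-forcing case and the critical points for (2) and (3) supplied by lemma \ref{Lemma.CriticalPoint}), and statement (4) is obtained by applying statement (1) inside $V[G]$. Your added remarks on why (4) cannot be a direct instance of the theorem and why its critical-point hypothesis must be assumed outright are accurate elaborations of what the paper leaves implicit.
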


\begin{proof}
This is a theorem scheme, asserting of each formula
$\varphi$ that it does not or is forced not to define such
an elementary embedding as stated in the relevant model.
Statement (1) is the special case of theorem
\ref{Theorem.DefinablejGeneralized} where the forcing was
trivial. In statements (2) and (3), we are guaranteed the
existence of a critical point by lemma
\ref{Lemma.CriticalPoint}, and so these are direct
instances of theorem \ref{Theorem.DefinablejGeneralized}.
Statement (4) follows from statement (1) applied in $V[G]$.
\end{proof}

Next, we turn to the impossibility of definable nontrivial
elementary embeddings from \HOD\ to \HOD.

\begin{theorem}\label{Theorem.NoDefinblej:HODtoHOD}
Do not assume \AC. There is no nontrivial elementary
embedding $j:\HOD\to\HOD$ that is definable in $V$ from
parameters.
\end{theorem}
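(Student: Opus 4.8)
The plan is to follow the soft, choice-free strategy of this section: exploit the definability of $j$ in $V$ to pin down its critical point by a parameter-free definition, and then derive a contradiction from elementarity. Formally this is a \ZF\ theorem scheme, asserting of each formula $\varphi$ that no parameter $p$ makes $\varphi(\cdot,\cdot,p)$ define a nontrivial elementary embedding $j:\HOD\to\HOD$. So suppose $\varphi(\cdot,\cdot,p)$ does define such a $j$. Since $\HOD\models\ZFC$, lemma \ref{Lemma.CriticalPoint} supplies a critical point $\kappa=\cp(j)$. Because $\HOD$ is a definable class and the properties ``defines a transitive proper class \ZFC\ model'' and ``defines a $\Delta_0$-elementary cofinal self-map'' are first-order expressible by lemmas \ref{Lemma.ZFFirstOrderExpressible} and \ref{Lemma.Gaifman}, the assertion that some parameter $w$ makes $\varphi(\cdot,\cdot,w)$ define a nontrivial elementary self-embedding of $\HOD$ with critical point $\delta$ is a first-order property of $\delta$ in $V$. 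Minimizing over $\delta$ and replacing $p$ accordingly, I would arrange that $\kappa$ is the least such ordinal, so that $\kappa$ is definable in $V$ \emph{without parameters}, exactly as in the proof of theorem \ref{Theorem.NoDefinablej:VtoVinZF}.

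The difficulty is that the elementarity of $j$ is elementarity with respect to $\HOD$, not $V$, so a $V$-definition of $\kappa$ does not immediately yield $j(\kappa)=\kappa$. To bridge this, I would push the configuration down into $\HOD$. Restricting $j$ to the definable inner model $\HOD^2=\HOD^{\HOD}$ produces, exactly as in claim \ref{Claim.UniformHODnDefinitionEtc}, a nontrivial elementary self-embedding $j\restrict\HOD^2:\HOD^2\to\HOD^2$ with the same critical point $\kappa$, and more generally a uniformly presented tower of self-embeddings $j\restrict\HOD^n:\HOD^n\to\HOD^n$; here the definability of $j$ in $V$ is precisely what makes each stage a single \NGBC\ assertion and so legitimizes the induction. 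Moreover the canonical well-ordering of $\HOD^{n+1}$ is definable over $\HOD^n$ without parameters, and is therefore fixed by the elementary map $j\restrict\HOD^n$; this is the choice-free substitute for a global well-order that drives the argument of theorem \ref{Theorem.NojVtoV=HOD}.

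The main obstacle, and the crux of the proof, is to convert the $V$-definition of $\kappa$ into one usable inside $\HOD$. I would argue that the restricted embedding, together with these fixed canonical well-orders, lets one run the minimization \emph{internally}: inside $\HOD$ one reproduces the parameter-free definition of the least critical point of a definable nontrivial self-embedding of $\HOD^{\HOD}$, so that $\kappa$ becomes definable over $\HOD$ itself without parameters. Elementarity of $j:\HOD\to\HOD$ would then force $j(\kappa)=\kappa$, contradicting $\kappa<j(\kappa)$ and finishing the argument; the easy case $V=\HOD$ already follows from theorem \ref{Theorem.NojVtoV=HOD}. I expect the hard part to be exactly this internal definability step, since $\HOD$ need not satisfy $V=\HOD$ and its global well-order need not be definable over $\HOD$. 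Making the reflection precise—showing that the relevant restrictions of $j$ are genuine classes of $\HOD^2$ and that the minimal critical point reflects downward—will require careful use of the uniform-presentation technique of claim \ref{Claim.UniformHODnDefinitionEtc} together with theorem \ref{Theorem.Ifj:VtoMThenV=HOD(M)} applied to the self-embedding $j\restrict\HOD^2$, whose domain does satisfy \AC.
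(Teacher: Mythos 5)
Your proposal stalls at exactly the point you flag as the crux, and that gap is not fixable along the lines you suggest. The minimization trick of theorem \ref{Theorem.NoDefinablej:VtoVinZF} makes $\kappa$ definable in $V$ without parameters, but as you correctly observe, $j$ is elementary only with respect to $\HOD$, so a $V$-definition of $\kappa$ gives no contradiction with $\kappa<j(\kappa)$. Your repair --- internalizing the minimization so that $\kappa$ becomes definable over $\HOD$ --- cannot work, because the embedding $j$ (and likewise $j\restrict\HOD^2$) is defined in $V$ from a parameter $b$ that need not be ordinal-definable; consequently $j$ need not be definable in $\HOD$, and indeed need not even be amenable to $\HOD$ (there is no reason for $j\restrict V_\alpha^{\HOD}$ to be an element of $\HOD$). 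So $\HOD$ cannot quantify over ``definable nontrivial self-embeddings of $\HOD^{\HOD}$'' in any way that is guaranteed to see $j$, and the class of critical points it computes internally may be empty. Note that if $j$ \emph{were} a class of $\HOD$ in the $\ZFC(j)$ sense (as happens when the parameters are ordinals), the theorem would follow at once from the Kunen inconsistency applied inside $\<\HOD,\in,j>$, as the paper remarks after the proof; the entire content of the theorem is the case where it is not.

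The paper's actual argument is quite different and is not ``soft'': it applies the L\'evy reflection theorem to the defining formula $\varphi$ to obtain an \emph{ordinal-definable} closed unbounded class $C$ of cardinals $\gamma$ with $j\image\gamma\of\gamma$. Because $C$ is ordinal-definable, its initial segments belong to $\HOD$, which lets one locate a $j$-fixed point $\gamma\in C$ whose cofinality, as computed in $\HOD$, is a $j$-fixed regular cardinal $(\delta^\plus)^{\HOD}$. One then runs the Ulam--Solovay stationary-partition argument entirely inside $\HOD$ (where \AC\ holds automatically, so no choice is needed in $V$), using $C\intersect\gamma\in\HOD$ as the club meeting the shifted stationary piece $T_\kappa=j(\vec S)(\kappa)$ to produce the usual disjointness contradiction. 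The essential idea your proposal is missing is this use of reflection on the defining formula to manufacture a club of closure points of $j$ that $\HOD$ can see, which substitutes for any attempt to define $\kappa$ or the embedding itself inside $\HOD$.
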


\begin{proof} This is formally a \ZF\ theorem scheme, asserting of
each formula that it cannot define such an embedding.
Suppose that $j:\HOD\to\HOD$ is a nontrivial elementary
embedding from $\HOD$ to $\HOD$ defined from parameter $b$
by the formula $\varphi$, so that $j(x)=y$ if and only if
$V\satisfies\varphi(x,y,b)$. (We do not assume that $b$ is
in $\HOD$.) Let $\kappa$ be the critical point of $j$,
which exists by lemma \ref{Lemma.CriticalPoint}. Let
$\theta$ be the $\in$-rank of $b$, so that $b\in V_{\theta+1}$.
By the L\'{e}vy reflection theorem, there is an
ordinal-definable closed unbounded class $C$ of cardinals $\gamma$
above $\theta$ for which the formulas $\varphi$ and
$\exists y\varphi(x,y,z)$ are absolute between $V_\gamma$
and $V$. It follows from this absoluteness that
$j\image\gamma\of\gamma$ for any $\gamma\in C$. Let
$\delta$ be the $\omega^\th$ element of $C$ above $\kappa$
and $\theta$. In particular, $j\image\delta\of\delta$ and
$\HOD$ believes that $\delta$ has cofinality
$\omega$, which is less than $\kappa$. These two
facts imply that $j(\delta)=\delta$. From this, it follows
that $j((\delta^\plus)^{\HOD})=(\delta^\plus)^{\HOD}$.
Thus, we have found that $j$ has a fixed point above
$\kappa$ that is regular in $\HOD$. Now, let $\gamma$ be
the $(\delta^\plus)^{\HOD}$-th element of $C$ above
$\kappa$ and $\theta$. Thus, $\gamma\in C$ and so
$j\image\gamma\of\gamma$. Since $C$ is definable in $V$
without parameters, it follows that every initial segment
of $C$ is in $\HOD$, and so $\HOD$ can see that
$\cof(\gamma)=(\delta^\plus)^{\HOD}$. And since
$(\delta^\plus)^{\HOD}$ is fixed by $j$, it follows that
$j(\gamma)=\gamma$.

The point now is that this is enough to run the main
stationary-partition argument, as in theorem
\ref{Theorem.Noj:V[G]toV}. Namely, by the Ulam-Solovay theorem in
$\HOD$, there is a partition of
$(\Cof_\omega\gamma)^\HOD=\bigsqcup_{\alpha<\kappa}S_\alpha$
into stationary sets $\vec
S=\<S_\alpha\st\alpha<\kappa>\in\HOD$. Let $\vec T=j(\vec
S)=\<T_\alpha\st\alpha<j(\kappa)>$, and let $S^*=T_\kappa$,
which is a stationary subset of $(\Cof_\omega\gamma)^\HOD$
in $\HOD$. Since $C\intersect\gamma\in\HOD$ and
$C\intersect\gamma$ is closed and unbounded in $\gamma$,
there is $\beta\in S^*\intersect C$. Since $\beta\in S^*$,
it follows that $\beta$ has cofinality $\omega$ in $\HOD$
and consequently $\beta\in S_\alpha$ for some
$\alpha<\kappa$. Since $\beta\in C$, however, it follows
also that $j\image\beta\of\beta$ and consequently
$j(\beta)=\beta$, which means that $\beta=j(\beta)\in
j(S_\alpha)=T_\alpha$. Thus, $\beta$ is in both $T_\alpha$
and $T_\kappa$, contradicting the fact that these are
disjoint, being distinct elements of the partition $\vec
T$.
\end{proof}

Theorem \ref{Theorem.NoDefinblej:HODtoHOD} is generalized
by theorem \ref{Theorem.NojMtoNwithHODofNomegaclosed}
below, which uses a different method. Note that we could
have used the reflection argument involving $C$ of this
proof in several of the earlier cases, where we had wanted
to find a club of closure points of $j$. In general, when
$j$ is merely an \NGB\ class, not necessarily definable, we
can still apply the Reflection theorem in $\ZFC(j)$ to find
a closed unbounded class $C$ of cardinals $\gamma$
reflecting a given finite collection of statements with
class parameter $j$, and $C$ will be definable from $j$.

The proof of theorem \ref{Theorem.NoDefinblej:HODtoHOD} is
much simpler in the case where $j$ is definable without
parameters or with ordinal parameters, for in this case one
gets directly that $j\restrict\theta\in\HOD$ for every
ordinal $\theta$, and this is enough to complete the
argument. Indeed, when $j:\HOD\to\HOD$ is definable in $V$
using no parameters or using ordinal parameters, then
$\HOD$ satisfies $\ZFC(j)$ and so we have directly an
instance of the Kunen inconsistency by restricting to
$\<\HOD,\in,j>$.

Since theorems \ref{Theorem.Noj:VtoHOD} and
\ref{Theorem.NojHODtoV} already rule out all class
embeddings of the form $V\to\HOD$ or $\HOD\to V$, without
using \AC, there is no need to consider definable
embeddings of that form.

Note that we made the assumption in theorem
\ref{Theorem.DefinablejGeneralized} and also in corollary
\ref{Corollary.DefinableGenericEmbeddings} that the
embedding $j$ has a critical point, because the situation
of these results are not directed covered by lemma
\ref{Lemma.CriticalPoint}. It is however conceivable to us
that a strengthened version of lemma
\ref{Lemma.CriticalPoint} might be possible, showing that
all such embeddings must have a critical point and allowing
us to eliminate that assumption.

%
Let us conclude with a generalization of theorem
\ref{Theorem.NoDefinblej:HODtoHOD}. Using the notation
$[A]^\omega$ to denote the set of subsets of $A$ of order
type $\omega$, where $A$ is a set of ordinals, we shall
appeal to the Erd\H{o}s-Hajnal theorem (see
\cite{Kanamori2004:TheHigherInfinite2ed}) in the following
form: If $\lambda$ is any ordinal, then there is an
$\omega$-\Jonsson\ function for $\lambda$, that is, a
function $f:[\lambda]^\omega\to\lambda$ such that for any
$A\of\lambda$ of size $\lambda$, then $f\image
[A]^\omega=\lambda$. Let $\ZFC_{+2}$ be the theory
consisting of the sentences \ZFC\ proves to be true in
$V_{\lambda+2}$, whenever $\lambda$ is a limit ordinal. For
example, one of these sentences is the existence of an
$\omega$-\Jonsson\  function
$f:[\lambda]^\omega\to\lambda$, since such a function
exists in $V_{\lambda+2}$, if one uses a flat pairing
function.

\begin{lemma}\label{Lemma.jMtoNofZFC+2}
Suppose that $j:M\to N$ is an elementary embedding of
transitive models $M,N\satisfies\ZFC_{+2}$, both of height
$\lambda+2$, and that $([\lambda]^\omega)^N\of M$ and
$\kappa<j(\kappa)$ for some $\kappa<\lambda$. Then there is
no set $T\of\lambda$ of size $\lambda$ in $N$ such that
$j(\beta)=\beta$ for all $\beta\in T$.
\end{lemma}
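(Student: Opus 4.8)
The plan is to run a Kunen-style $\omega$-\Jonsson\ function argument, reasoning by contradiction. Suppose toward a contradiction that $T\of\lambda$ has size $\lambda$ in $N$ and that $j(\beta)=\beta$ for every $\beta\in T$. First I would record the two structural facts that make the machine turn. Since both $M$ and $N$ have height $\lambda+2$, the ordinal $\lambda$ is the largest limit ordinal of each, a property definable without parameters, so elementarity gives $j(\lambda)=\lambda$. And since the finite ordinals and $\omega$ are each definable, $j$ fixes them, whence $\cp(j)>\omega$; let $\kappa_0=\cp(j)$, which exists because some $\kappa<\lambda$ is moved, and note $\kappa_0\le\kappa<\lambda$. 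The usual computation then shows $\kappa_0\notin\ran(j)$: no $\alpha$ can satisfy $j(\alpha)=\kappa_0$, since $j(\alpha)\ge\alpha$ forces either $\alpha<\kappa_0$ and hence $j(\alpha)=\alpha\neq\kappa_0$, or $\alpha\geq\kappa_0$ and hence $j(\alpha)>\kappa_0$.

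Next I would extract the \Jonsson\ function. Because $M\satisfies\ZFC_{+2}$ and $\lambda$ is its top limit ordinal, $M$ contains an $\omega$-\Jonsson\ function $f:[\lambda]^\omega\to\lambda$. Setting $g=j(f)$, elementarity together with $j(\lambda)=\lambda$ makes $g$ an $\omega$-\Jonsson\ function for $\lambda$ in $N$. Applying the \Jonsson\ property inside $N$ to the set $T$, which has size $\lambda$ there, yields $g\image([T]^\omega)^N=\lambda$. In particular there is some $s\in([T]^\omega)^N$ with $g(s)=\kappa_0$.

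The crux of the argument is to show that this $s$ is fixed by $j$ as a set, so that $g(s)$ is forced into $\ran(j)$. Here is where the closure hypothesis does its work: since $s\in([\lambda]^\omega)^N\of M$, the set $s$ lies in $M$, so both $f(s)$ and $j(s)$ are meaningful. Now $s$ has order type $\omega$, and I would run the enumeration argument: letting $e\in M$ enumerate $s$ in increasing order, $j(e)$ enumerates $j(s)$ in increasing order, and since $j$ fixes each natural-number index, the $n$th element of $j(s)$ is $j$ of the $n$th element of $s$. As every element of $s\of T$ is fixed by $j$, this gives $j(s)=j\image s=s$. Then by elementarity $g(s)=j(f)(j(s))=j(f(s))$, so $\kappa_0=g(s)=j(f(s))\in\ran(j)$, contradicting $\kappa_0\notin\ran(j)$.

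I expect the main obstacle to be exactly this middle step --- justifying $j(s)=s$ rather than merely $j\image s\of j(s)$. The subtlety is that $j$ applied to the set $s$ need not a priori agree with the pointwise image; the equality rests on $s$ having order type $\omega$ together with $\cp(j)>\omega$, which pins down $j(s)$ through its increasing enumeration. The other place needing care is the careful tracking of the closure hypothesis $([\lambda]^\omega)^N\of M$, which is precisely what lets $s$ re-enter the domain $M$ so that $f(s)$ and $j(s)$ can be formed; without it the argument would stall at the transition from $N$ back to $M$.
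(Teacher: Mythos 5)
Your proposal is correct and follows essentially the same argument as the paper: pull back an $\omega$-J\'onsson function from $M$, apply $j(f)$ to a countable subset $s$ of the fixed-point set $T$ hitting the critical point, use the closure hypothesis $([\lambda]^\omega)^N\of M$ to get $s\in M$, and conclude $j(s)=j\image s=s$ so that the critical point lands in $\ran(j)$. Your extra care with the increasing-enumeration argument for $j(s)=s$ and with $\kappa_0\notin\ran(j)$ only makes explicit what the paper leaves implicit.
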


\begin{proof}
Suppose that $j:M\to N$ is elementary and that $M$ and $N$
are transitive models of $\ZFC_{+2}$ of height $\lambda+2$
and that $\kappa<\lambda$ is the critical point of $j$.
Note that $j(\lambda)=\lambda$, since this is the second
largest ordinal of both $M$ and $N$. Suppose toward
contradiction that there is a set $T\of\lambda$ of size
$\lambda$ in $N$ consisting of fixed points of $j$. By the
\Erdos-Hajnal theorem, there is an $\omega$-\Jonsson\
function $f:[\lambda]^\omega\to\lambda$ in $M$. By
elementarity, $j(f)$ is an $\omega$-\Jonsson\ function
$j(f):[\lambda]^\omega\to\lambda$ in $N$. Since
$T\of\lambda$ has size $\lambda$, it follows by the
$\omega$-\Jonsson\ property that there is some
$s\in[T]^\omega$ in $N$ such that $j(f)(s)=\kappa$, since
the map is onto $\lambda$. Our assumption that
$([\lambda]^\omega)^N\of M$ implies that $s\in M$. But since
$s$ has order type $\omega$ and $j(\beta)=\beta$ for each
$\beta\in T$, it follows that $j(s)=j\image s=s$. Thus,
$\kappa=j(f)(s)=j(f)(j(s))=j(f(s))$, which would place
$\kappa$ in the range of $j$, which is impossible for the
critical point.
\end{proof}

\begin{theorem}\label{Theorem.NojMtoNwithHODofNomegaclosed}
Do not assume \AC. Suppose that $M$ and $N$ are transitive
proper class models of \ZFC\ with $\HOD\of N$ and
$([\ORD]^\omega)^N\of M$. Then there is no nontrivial
elementary embedding $j:M\to N$ that is definable in $V$
from parameters.
\end{theorem}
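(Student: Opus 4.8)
The plan is to derive a contradiction with lemma \ref{Lemma.jMtoNofZFC+2} by producing, for a suitable fixed point $\lambda$ of $j$, a set $T\of\lambda$ of size $\lambda$ that lies in $N$ and consists entirely of fixed points of $j$. As usual this is really a theorem scheme: suppose the graph of $j$ is defined in $V$ by $\varphi(\cdot,\cdot,b)$ with parameter $b$ of rank $\theta$, so that $M=\dom(j)$ and $N$ are both definable from $b$. Since $M\satisfies\ZFC$, lemma \ref{Lemma.CriticalPoint} supplies a critical point $\kappa$, so $\kappa<j(\kappa)$; note that no use of choice in $V$ is needed, as this is internal to $M$. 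Because $j$ is definable in $V$, a L\'evy reflection argument, exactly as in the proof of theorem \ref{Theorem.NoDefinblej:HODtoHOD}, produces an \emph{ordinal}-definable closed unbounded class $C$ of cardinals $\gamma>\theta$ at which $\varphi$ and $\exists y\,\varphi$ reflect; for such $\gamma$ one gets $j\image\gamma\of\gamma$. The crucial payoff of ordinal-definability is that every initial segment $C\intersect\delta$ lies in $\HOD$, and hence in $N$ by hypothesis.

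Next I would record the key fact that the hypothesis $([\ORD]^\omega)^N\of M$ converts closure points of countable cofinality into genuine fixed points: if $\beta\in C$ and $\cof(\beta)^N=\omega$, then a cofinal $\omega$-sequence in $\beta$ exists in $N$, its range belongs to $([\ORD]^\omega)^N\of M$, so the sequence lies in $M$ and may be moved by $j$; since $j\image\beta\of\beta$, this forces $j(\beta)=\beta$. This is the only place the closure hypothesis on $\omega$-sequences is used to manufacture fixed points, and it is the analogue of the ``$\cof=\omega$'' trick of the earlier proofs.

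The delicate point, and what I expect to be the main obstacle, is arranging a fixed point $\lambda$ with $j(\lambda)=\lambda$ that has $\lambda$-\emph{many} fixed points below it in $N$. Naive choices, such as the supremum of the critical sequence or the $\rho^{\mathrm{th}}$ element of $C$, do yield a fixed point but leave too few fixed points beneath it. The resolution is to let $C^*$ be the (again ordinal-definable) club of closure points of the increasing enumeration of $C$, so that $\gamma\in C^*$ implies $\mathrm{ot}(C\intersect\gamma)=\gamma$, and then to choose $\lambda$ to be a cardinal of $V$ lying in $C^*$ with $\cof(\lambda)=\omega$ and $\lambda>\omega_1$. Such a $\lambda$ is obtained as the supremum of an ordinal-definable $\omega$-sequence drawn from $C^*\intersect\mathrm{Card}^V$, which, being ordinal-definable, lies in $\HOD\of N$ and so also witnesses $\cof(\lambda)^N=\omega$. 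Being a limit of $V$-cardinals, $\lambda$ is a cardinal of $N$ (as $N\of V$), and $\lambda\in C$ together with $\cof(\lambda)^N=\omega$ gives $j(\lambda)=\lambda$ by the fact of the previous paragraph. Now set $T=\set{\beta\in C\intersect\lambda\st\cof(\beta)^N=\omega}$. Since $C\intersect\lambda\in N$ and the cofinality function is definable in $N$, we have $T\in N$; the increasing continuous enumeration of the club $C\intersect\lambda$ has order type $\lambda$, so $\cof(\beta)^N=\omega$ holds for $\lambda$-many $\beta$, whence $|T|^N=\lambda$; and every member of $T$ is a fixed point of $j$.

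Finally I would pass to the set models $V_{\lambda+2}^M$ and $V_{\lambda+2}^N$. These have height $\lambda+2$ and satisfy $\ZFC_{+2}$, since $M,N\satisfies\ZFC$ and $\lambda$ is a limit ordinal; the restriction $j\restrict V_{\lambda+2}^M\colon V_{\lambda+2}^M\to V_{\lambda+2}^N$ is elementary because $j(\lambda)=\lambda$; we have $([\lambda]^\omega)^N\of M$ from the global hypothesis; and $\kappa<\lambda$ with $\kappa<j(\kappa)$. Thus lemma \ref{Lemma.jMtoNofZFC+2} applies and forbids precisely the object we have built, since $T\of\lambda$ has size $\lambda$ in $N$ and consists of fixed points. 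This contradiction completes the argument. The only genuine subtleties beyond this outline are the bookkeeping ensuring that $C$, $C^*$, and the chosen sequences are ordinal-definable (so that the relevant objects land in $\HOD\of N$) and the verification that the enumeration of $C\intersect\lambda$ is continuous, both of which are routine.
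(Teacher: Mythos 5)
Your proposal is correct and follows essentially the same route as the paper: a L\'evy-reflection club $C$ of closure points, the hypotheses $\HOD\of N$ and $([\ORD]^\omega)^N\of M$ to convert closure points of countable cofinality into genuine fixed points, a fixed $\lambda$ with $\lambda$-many fixed points below it forming a set in $N$, and then the $\omega$-J\'onsson lemma \ref{Lemma.jMtoNofZFC+2} for the contradiction (the paper packages the fixed points as the ``simple limits'' of $C$ rather than your $T$, but this is only bookkeeping). The one step worth tightening is your claim that $\lambda$-many $\beta\in C\intersect\lambda$ have $\cof(\beta)^N=\omega$: $V$-cofinality $\omega$ alone would not suffice for this, so you should say explicitly that the simple limits of $C$ carry an ordinal-definable witnessing $\omega$-sequence lying in $\HOD\of N$, which is exactly the observation the paper makes.
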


\begin{proof}
Suppose that $j$ is defined by the formula $\varphi$ and
parameter $s$, so that $j(x)=y\iff
V\satisfies\varphi(x,y,s)$. Let $\kappa$ be the critical
point of $j$, which exists by lemma
\ref{Lemma.CriticalPoint}. By the \Levy\ reflection
theorem, there is an ordinal-definable closed unbounded
class $C$ of cardinals $\delta$ above $\kappa$ and the rank
of $s$ such that $\varphi$ and $\exists y\varphi$ both
reflect from $V$ to $V_\delta$. It follows as in the proof
of theorem \ref{Theorem.NoDefinblej:HODtoHOD} that every
$\delta\in C$ has $j\image\delta\of\delta$. Notice that if
$\delta$ is the $\omega^\th$ element of $C$ above any
ordinal---these are exactly the {\df simple limits} of $C$,
that is, limit points of $C$ that are not limits of limits
of $C$---then $\delta$ will have cofinality $\omega$ in
$\HOD$ and hence also in $N$ and hence in $M$, from which
it follows that $j(\delta)=\delta$. Let $\beta_0$ be the
first simple limit of $C$; let $\beta_{n+1}$ be the
$\beta_n^\th$ simple limit of $C$; and let
$\lambda=\sup_n\beta_n$. It follows that $\lambda\in C$ and
that $\lambda$ has cofinality $\omega$ in $\HOD$ and hence
in $N$ and hence in $M$, and this implies
$j(\lambda)=\lambda$. But also, the set of simple limits of
$C$ below $\lambda$ has size $\lambda$ and is in \HOD\ and
hence in $N$. Since these are all fixed points of $j$, the
restriction $j\restrict V_{\lambda+2}^M\to V_{\lambda+2}^N$
violates lemma \ref{Lemma.jMtoNofZFC+2}, a contradiction.
\end{proof}

%
%

\section{Open Questions}

We conclude this article by mentioning the two most
prominent open questions remaining in this area. Perhaps
the principal open question is whether one can prove the
Kunen inconsistency without using the axiom of choice.

\begin{question} Is it consistent without \AC\ that $j:V\to V$ is a
nontrivial elementary embedding of the universe to
itself?\label{Question.nojVtoVwithoutAC?}
\end{question}

We are also naturally interested in the corresponding
question for each of the generalizations of the Kunen
inconsistency whose current proofs use \AC. For example, in
the $\neg\AC$ context can there be nontrivial elementary
embeddings $j:V[G]\to V$ or $j:V\to V[G]$ for a set-forcing
extension $V[G]$?

Of course, theorem \ref{Theorem.NoDefinablej:VtoVinZF} and
the other results of section \ref{Section.jDefinable}
settle the case of definable embeddings $j$ in \ZF. In
particular, if one understands the Kunen inconsistency
solely as a \ZF\ scheme, in the manner we described in
section \ref{Section.Preliminaries}, then one might regard
question \ref{Question.nojVtoVwithoutAC?} as settled by
theorem \ref{Theorem.NoDefinablej:VtoVinZF}. But we ask the
question in the context of \NGB, where the embedding $j$
will be merely a class in \NGB, not necessarily first-order
definable from parameters. With this interpretation, the
theorems here on definable embeddings do not answer it. An
equivalent formalization of the question would inquire
whether it is consistent with $\ZF(j)$, which allows
formulas using the class $j$ into the Separation and
Replacement schemes, that $j:V\to V$ is a nontrivial
elementary embedding of the universe to itself. All of the
arguments we have used in this article to refute the
existence of such $j$ have either used the stationary partition theorem, which relies on \AC, or have made
assumptions on the definability of $j$. Thus, they do not
answer question \ref{Question.nojVtoVwithoutAC?}.

%
%

A second, related open question is whether there can be a
nontrivial elementary embedding from $\HOD$ to $\HOD$.

\begin{question} \label{Question.jHODtoHOD?}
Is it consistent that there is a nontrivial elementary
embedding $j:\HOD\to \HOD$?
\end{question}

We ask the question in the \NGBC\ or $\ZFC(j)$ contexts,
although it is also sensible to drop \AC\ here. Theorem
\ref{Theorem.NoDefinblej:HODtoHOD} refutes without \AC\
such $j$ that are definable from parameters. Of course, if
$j:V\to V$ is a nontrivial elementary embedding, then so is
$j\restrict\HOD:\HOD\to\HOD$, and so an affirmative answer
to question \ref{Question.nojVtoVwithoutAC?} would imply anf
affirmative answer to question \ref{Question.jHODtoHOD?} in the $\neg\AC$ context. A
negative answer to question \ref{Question.jHODtoHOD?} in
the $\neg\AC$ context, in addition to implying a negative
answer to question \ref{Question.nojVtoVwithoutAC?}, would
also imply by corollary
\ref{Corollary.jMtoVimpliesjHODtoHOD} that there is no
nontrivial elementary embedding $j:M \to V$ whenever $M$ is
definable without parameters or with parameters fixed by
$j$.

In addition to these two questions, of course, there are
numerous others. For example, to what extent do the
theorems we have mentioned about embeddings arising in
set-forcing extensions also apply to class forcing? Or to
certain kinds of class forcing? Or to other non-forcing
extensions? To what extent do the theorems on $\HOD$
generalize to other natural definable classes? We should
like to know the answers.

\bibliographystyle{alpha}
\bibliography{HamkinsBiblio,MathBiblio}
\end{document}